\documentclass[a4paper,11pt]{article}

\usepackage{amsmath}    % need for subequations
\usepackage{amssymb}
\usepackage{amsthm} 
\usepackage{bbm}
\usepackage{booktabs}
\usepackage{microtype}
\usepackage{a4wide}
\usepackage{todonotes}

\newtheorem{theorem}{Theorem}[section]
\newtheorem{corollary}[theorem]{Corollary}
\newtheorem{lemma}[theorem]{Lemma}
\newtheorem{proposition}[theorem]{Proposition}

\theoremstyle{definition}
\newtheorem{definition}[theorem]{Definition}
\newtheorem{remark}[theorem]{Remark}

\usepackage[toc,page]{appendix}
\usepackage{color}
\usepackage{dsfont}
\usepackage{extarrows}
\usepackage{enumerate}
\usepackage{graphicx}   % need for figures
\usepackage{mathrsfs}
\usepackage{verbatim}   % useful for program listings
\usepackage{subfigure}  % use for side-by-side figures
\usepackage{hyperref}   % use for hypertext links, including those to external documents and URLs
\DeclareMathOperator{\Lip}{Lip}

\usepackage{mathabx}

\newcommand{\Wc}{W} % Wasserstein distance (order determined by subscript)

\def\beq{\begin{eqnarray}}
\def\eeq{\end{eqnarray}}
\def\be*{\begin{eqnarray*}}
\def\ee*{\end{eqnarray*}}

\def \I{\mathbb{I}}

\def \N{\mathbb{N}}

\def \P{\mathbb{P}}
\def \Q{\mathbb{Q}}
\def \R{\mathbb{R}}

\def\Ac{{\cal A}}

\def\Fc{{\cal F}}

\def\Mc{{\cal M}}

\def\Pc{{\cal P}}

\def\Sc{{\cal S}}

\def\Wc{{\cal W}}

\def \d {\mbox{d}}
\def \eps {\varepsilon}

\def \om {\omega}

\def \cv {\le_{\rm c}}
\def \cvd {\le_{\rm cd}}

\def \AW {\Ac\Wc_1}

\def\namedlabel#1#2{\begingroup
	#2%
	\def\@currentlabel{#2}%
	\phantomsection\label{#1}\endgroup
}

\newcommand{\sP}{\Pc} % alias: space of probability measures

\def \supp{{\rm{supp}}}
\def \1 {{\mathbf 1}}

\def \ba{{\textsf{bary}}}

\title{\Large \bf
Stability of weak supermartingale optimal transport problems
}
\date{\vspace{-5ex}}

\author{Shuoqing Deng
\thanks{
{\small The Hong Kong University of Science and Technology, Department of Mathematics (masdeng@ust.hk). S. Deng is supported by the Hong Kong University of Science and Technology Start-up Grant No. R9826 and Hong Kong RGC Early Career Scheme (ECS) Grant No. 26307125. }}%
\and 
Gaoyue Guo
\thanks{
{\small 
Université Paris-Saclay CentraleSupélec, Laboratoire MICS and CNRS FR-3487 (gaoyue.guo@centralesupelec.fr). Guo acknowledges financial support of ANR-25-CE40-0714 (MATH-SPA).
}}%}
\and
Dominykas Norgilas
\thanks{
{\small 
North Carolina State University, Department of Mathematics (dnorgil@ncsu.edu).}}%}
}

\begin{document}

% \begin{center}
% {\Large Stabilization of Jump-Diffusion stochastic differential equation by hysteresis switching} 
% \end{center}
% \begin{center}
% {\large \today}\\[3cm] 
% \end{center}

\maketitle

\begin{abstract}
We investigate stability properties of weak supermartingale optimal transport (WSOT) problems on $\R$. 
For probability measures $\mu,\nu\in\Pc_r$ satisfying $\mu\cvd \nu$ (equivalently, $\Pi_S(\mu,\nu)\neq\emptyset$), we consider supermartingale couplings $\pi=\mu(\d x)\pi_x(\d y)$ and the weak transport functional
\[
V_S^C(\mu,\nu)
:=
\inf_{\pi\in\Pi_S(\mu,\nu)}
\int_\R C(x,\pi_x)\,\mu(\d x),
\]
for some appropriate cost function $C:\R\times\Pc_r\to\R$. 
Our first main contribution is an approximation result in adapted Wasserstein distance: under $W_r$-convergence of marginals $(\mu^k,\nu^k)\to(\mu,\nu)$ with $\mu^k\cvd \nu^k$, every  $\pi\in\Pi_S(\mu,\nu)$ can be approximated by $\pi^k\in\Pi_S(\mu^k,\nu^k)$ such that $\mathcal{AW}_r(\pi^k,\pi)\to0$. 
As a consequence, we obtain the continuity of the functional $(\mu,\nu) \mapsto V_S^C(\mu,\nu)$, and the monotonicity principle for WSOT.
\end{abstract}

\tableofcontents

\section{Introduction}\label{sec:intro}

Over the past two decades, the \emph{constrained optimal transport}---an optimal mass transfer subject to additional structural constraints---has become a central topic. 
A prominent example is the \emph{martingale optimal transport} (MOT), motivated by the model-independent pricing and hedging in mathematical finance. 

Let $X, Y$ be Polish subspaces of $\R$, $\mu \in\mathcal P_1(X),\nu\in\mathcal P_1(Y)$ the probability measures on $X$ and $Y$ (with finite first moments), respectively, and let $\Pi(\mu,\nu)$ be a set of probability measures on $X\times Y$ (often called couplings, or transport plans) with $\mu$ and $\nu$ as (first and second) marginals: 
\[
\pi(dx,dy)=\mu(dx)\pi_x(dy)\in\Pi(\mu,\nu).
\]
Then imposing the following constraints on the conditional barycentres, give rise to the sets of martingale and supermartingale couplings of $\mu$ and $\nu$:
\begin{align}
\Pi_M(\mu,\nu)
&:=\Big\{\pi\in\Pi(\mu,\nu):
\int_Y y\,\pi_x(dy)=x \ \text{for }\mu\text{-a.e.\ }x\Big\},\label{eq:def_PiM_intro}\\
\Pi_S(\mu,\nu)
&:=\Big\{\pi\in\Pi(\mu,\nu):
\int_Y y\,\pi_x(dy)\le x \ \text{for }\mu\text{-a.e.\ }x\Big\};\label{eq:def_PiS_intro}
\end{align}
and for  a fixed cost function $c:X\times Y\to\R$, one then further seeks to minimize/maximize $\pi\mapsto\int cd\pi$ over $\Pi_{M}(\mu,\nu)$ (or $\Pi_S(\mu,\nu)$). The discrete and  continuous-time formulations of the optimal martingale transport problems were introduced by  \cite{BeiglbockHenryLaborderePenkner:13} and \cite{GalichonHenryLabordereTouzi:14}, respectively.

A basic feasibility question concerns the non-emptiness of the  sets $\Pi_M(\mu,\nu)$ and $\Pi_S(\mu,\nu)$. 
By the Strassen-type theorems (Strassen \cite{strassen1965existence}), this is characterized by stochastic orders:
\[
\Pi_M(\mu,\nu)\neq\emptyset \iff \mu\cv\nu,
\qquad
\Pi_S(\mu,\nu)\neq\emptyset \iff \mu\cvd\nu,
\]
where $\cv$ and $\cvd$ denote the \emph{convex order} and the \emph{decreasing convex order}, respectively.

Weak optimal transport (WOT) extends the classical transport problem by allowing the cost to depend on the conditional law rather than only on the locations of transported point. 
Given %probability measures on (Polish subspaces of $\mathbb R$) $X,Y$, denoted by $\mu\in\mathcal P(\mathcal X)$ and $\nu\in\mathcal P(\mathcal Y)$, and 
a measurable cost function $C:X\times\mathcal P(Y)\to\R$, the weak optimal transport problem is defined as:
\begin{equation}\label{eq:wot_problem}
V(\mu,\nu):=\inf_{\pi\in\Pi(\mu,\nu)}\int_{ X} C(x,\pi_x)\,\mu(dx).
\end{equation}
This formulation goes back to the works of Marton \cite{marton1996measure, Marton1996BoundingB} and Talagrand \cite{talagrand1995concentration, talagrand1996new}, and has since been developed extensively; see, for example,
\cite{alfonsi2017sampling, fathi2018curvature, gozlan2020mixture, shu2020hopf, shu2018hamilton, gozlan2017kantorovich, gozlan2018characterization, alibert2019new, backhoff2019existence, backhoff2020martingale, BackPam, backhoff2022applications}.

In this paper we study \emph{weak supermartingale optimal transport} (WSOT) on $X\times\mathcal P_1(Y)$, namely
\begin{equation}\label{eq:wsot_intro}
V_S^C(\mu,\nu)
:=
\inf_{\pi\in\Pi_S(\mu,\nu)}
\int_{Y} C(x,\pi_x)\,\mu(dx),
\end{equation}
which extends both classical supermartingale transport and weak martingale transport by allowing costs that depend on the full conditional law.

\subsection{Main contribution}

A central issue for both theory and applications is stability with respect to the marginals:
given $(\mu^k,\nu^k)\to(\mu,\nu)$, does one have
\[
V_S^C(\mu^k,\nu^k)\to V_S^C(\mu,\nu),
\]
and, more delicately, can the feasible sets $\Pi_S(\mu^k,\nu^k)$ approximate $\Pi_S(\mu,\nu)$ in a topology that is fine enough to control the conditional laws?

In the martingale case, stability results were initiated by Juillet \cite{Juillet:16}
and Guo and Ob{\l}\'oj \cite{GO2019}, and later established in full generality on the line
by Beiglb\"ock, Jourdain, Margheriti and Pammer
\cite{BJMP, BJMP1} using the adapted Wasserstein topology.
In higher dimensions, stability may fail even for MOT \cite{BJ.2021}.

For supermartingale transport (SOT), continuity of the value has been obtained for certain canonical constructions,
notably via shadow measures \cite{BayDengNor_1, BayDengNor_2}.
These results are quantitative but rely on specific couplings.
The present work develops a general stability theory for weak supermartingale optimal transport on the line.

For the monotonicity principle in MOT, Beiglb\"ock and Juillet \cite{BeiglbockJuillet:16}
established the cyclical monotonicity principle in the martingale setting and introduced the left-curtain coupling, which is optimal for a class of cost functions.
Motivated by ideas from MOT, Beiglb\"ock, Cox and Huesmann \cite{BeiglbockCoxHuesmann:17}
developed a geometric characterization of Skorokhod embeddings with certain optimality properties, leading to a systematic construction of optimal  embeddings. Further studies on monotonicity principles can be found in
Guo, Tan and Touzi \cite{GTT1, GTT2}.

\medskip

\begin{itemize}
    \item Assume that $\mu^k\cvd\nu^k$ for all $k$ and that $(\mu^k,\nu^k)\to(\mu,\nu)$ in $W_1$.
Our first main result (Theorem~\ref{thm:approx}) is an approximation theorem in adapted Wasserstein distance:
every $\pi\in\Pi_S(\mu,\nu)$ can be approximated by couplings $\pi^k\in\Pi_S(\mu^k,\nu^k)$ such that
$\AW(\pi^k,\pi)\to0$.
This extends \cite[Theorem~2.6]{BJMP1} from martingales to supermartingales and requires new arguments to handle
the completion of sub-probability measures under the decreasing convex order.
As a consequence, we obtain stability of the WSOT problem for a suitable class of costs,
including continuity of the optimal value and convergence of optimizers.

    \item Our second main result is a monotonicity principle for supermartingale optimal transport,
in both weak and classical formulations.
It provides a geometric characterization of the support of optimal couplings and extends the martingale monotonicity
principle of \cite{BJMP} to the supermartingale setting.
\end{itemize}

\paragraph{Organization of the paper.} Section~\ref{sec:prel} introduces notation and preliminary results.
Section~\ref{sec:main} contains the main theorems on approximation, stability, and monotonicity.
Section~\ref{sec:proof-approx} is devoted to the proof of the approximation theorem,
and Section~\ref{sec:monoto_proof} proves the monotonicity principle. Finally, 
Section~\ref{sec:appendix} collects some useful results.

\subsection{Preliminaries}\label{sec:prel}

This section fixes notation and recalls results used throughout the paper. Unless stated otherwise, we work on $\R$.

\paragraph{Measures, couplings and disintegration.}
For a Polish space $X$, let $\mathcal M(X)$ denote the set of finite, positive Borel measures on $X$, and
$\mathcal P(X)\subset\mathcal M(X)$ denote the set of probability measures.
For $r\ge 1$, let $\mathcal M_r(X)$ and $\mathcal P_r(X)$ be the subsets of measures with finite $r$-th moment.
We write $\mathcal M:=\mathcal M(\R)$, $\mathcal P:=\mathcal P(\R)$ and similarly $\mathcal M_r,\mathcal P_r$.

For $\eta\in\mathcal M_1$, define its first moment and barycentre by
\[
\overline\eta:=\int_\mathbb R x\,\eta(dx),\qquad
\ba(\eta):=\frac{\overline\eta}{\eta(\mathbb R)}
 \quad \text{if } \eta(\mathbb R)>0.
\]
We denote by $\supp(\eta)$ the closed support of $\eta$, by $\mathcal I_\eta$ the smallest interval containing $\supp(\eta)$,
and by $\ell_\eta,r_\eta$ the endpoints of $\mathcal I_\eta$ (possibly infinite).

Let $Y$ be another Polish space, for $\mu\in\mathcal M(X)$ and $\nu\in\mathcal M(Y)$ with equal mass, $\Pi(\mu,\nu)$ denotes the set of couplings on $X\times Y$. For $(\mu_k)_{k \geq 1}, \mu \in \Mc(X)$, we say $\mu_k$ converge to $\mu$ strongly, if for any measurable set $A \subset X$, $\mu_k \to \mu$ when $k \to +\infty$.

\paragraph{Wasserstein distance.}
For $\mu,\nu\in\mathcal M_r(\mathbb R)$ with $\mu(\mathbb R)=\nu(\mathbb R)$, denote by $W_r(\mu,\nu)$ their Wasserstein distance of order $r$. For $r=1$, the following duality holds, i.e.,
\begin{equation}\label{eq:Wass}
W_1(\mu,\nu)
=\sup_{\|f\|_{\Lip}\le 1}\Big\{\int f\,d\mu-\int f\,d\nu\Big\}.
\end{equation}
For $\eta\in\mathcal M(\mathbb R)$, define its distribution function $F_\eta(x):=\eta((-\infty,x])$ and the left-continuous quantile
\[
F_\eta^{-1}(u):=\inf\{x\in\mathbb R:\ F_\eta(x)\ge u\},\qquad u\in(0,\eta(\mathbb R)).
\]
Then the quantile formula yields
\begin{equation}\label{eq:WassQuantile}
W_1(\mu,\nu)
=\int_0^{\mu(\mathbb R)}\big|F_\mu^{-1}(u)-F_\nu^{-1}(u)\big|\,du
=\int_\mathbb R |F_\mu(x)-F_\nu(x)|\,dx.
\end{equation}

\paragraph{Adapted Wasserstein distance.}
Let $\pi=\mu\times\pi_x\in\mathcal M_r(\mathbb R^2)$ and $\pi'=\mu'\times\pi'_{x'}\in\mathcal M_r(\mathbb R^2)$ with $\mu(\mathbb R)=\mu'(\mathbb R)$.
The adapted Wasserstein distance is defined by
\begin{equation}\label{eq:AW_def}
\Ac\Wc_r(\pi,\pi'):=\left(\inf_{\gamma\in\Pi(\mu,\mu')}\, \int_{\mathbb R^2}\Big(|x-x'|^r+W_r(\pi_x,\pi'_{x'})^r\Big)\,\gamma(dx,dx')\right)^{1/r}.
\end{equation}
For $r=1$, with the canonical embedding
\[
J(\pi):=\mu(dx)\,\delta_{\pi_x}(dp)\in\mathcal M_1\big(\mathbb R\times\mathcal P_1\big),
\]
one has $\AW(\pi,\pi')=W_1(J(\pi),J(\pi'))$.

\paragraph{Put function and convergence in $W_1$.}
For $\eta\in\mathcal M_1$, define the put and potential functions $P_\eta, U_\eta : \R\to \R_+$ by
\[
P_\eta(x):=\int_\mathbb R (x-t)^+\,\eta(dt),\qquad
U_\eta(x):=\int_\mathbb R |x-t|\,\eta(dt).
\]
These functions are convex and determine $\eta$ via second derivatives in the sense of distributions; see
\cite{chacon1977potential, chacon1976one, kellerer73}.

\begin{lemma}\label{lem:put}
Let $(\mu^k)_{k\ge 1}\subset \mathcal P_1(\R)$ and $\mu \in \mathcal P_1(\R)$. Then
\[
W_1(\mu^k,\mu)\to0
\quad\Longleftrightarrow\quad
P_{\mu^k}\to P_\mu\ \text{ pointwise on }\mathbb R\ \text{and}\ \ba(\mu^k)\to\ba(\mu),
\]
and equivalently with uniform convergence of $P_{\mu^k}$ on $\mathbb R$. In particular,
\[
\sup_{x\in\mathbb R}\big|P_{\mu^k}(x)-P_\mu(x)\big|\le W_1(\mu^k,\mu).
\]
\end{lemma}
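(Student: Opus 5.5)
The plan is to prove the quantitative bound first and then derive the equivalence from it together with a uniform-integrability argument. For the bound, the map $t\mapsto (x-t)^+$ is $1$-Lipschitz for each fixed $x$. By the Kantorovich--Rubinstein duality \eqref{eq:Wass},
\[
|P_{\mu^k}(x)-P_\mu(x)|=\Big|\int (x-t)^+\,\mu^k(dt)-\int (x-t)^+\,\mu(dt)\Big|\le W_1(\mu^k,\mu).
\]
Taking the supremum over $x$ gives the displayed inequality. The same argument with $f(t)=t$ gives $|\ba(\mu^k)-\ba(\mu)|\le W_1(\mu^k,\mu)$, since all masses equal one. Hence $W_1$-convergence implies uniform convergence of the put functions (so also pointwise convergence) and convergence of the barycentres.

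For the converse, assume $P_{\mu^k}\to P_\mu$ pointwise and $\ba(\mu^k)\to\ba(\mu)$. The call function is $C_\eta(x):=\int (t-x)^+\eta(dt)=P_\eta(x)-x+\overline\eta$, using $(t-x)^+-(x-t)^+=t-x$. So $C_{\mu^k}\to C_\mu$ pointwise as well, and hence $U_{\mu^k}=P_{\mu^k}+C_{\mu^k}\to U_\mu$ pointwise. Next I would identify the limit in distribution. For $h>0$ the difference quotient $(P_\eta(x+h)-P_\eta(x))/h=\int \min\{(x+h-t)^+,h\}/h\,\eta(dt)$ lies between $F_\eta(x)$ and $F_\eta(x+h)$. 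Sandwiching then gives $\limsup_k F_{\mu^k}(x)\le F_\mu(x+h)$ and $\liminf_k F_{\mu^k}(x+h)\ge F_\mu(x)$. Letting $h\downarrow0$ at continuity points of $F_\mu$ yields weak convergence $\mu^k\Rightarrow\mu$.

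The main step, as I see it, is upgrading weak convergence to $W_1$-convergence; this requires convergence of first absolute moments, or equivalently uniform integrability of $|t|$. Here I would use tail control from the puts and calls: $\int_{(-\infty,-R]}|t|\,\mu^k(dt)\le 2P_{\mu^k}(-R/2)$, since $(-R/2-t)^+\ge |t|/2$ for $t\le -R$. Similarly $\int_{[R,\infty)}|t|\,\mu^k(dt)\le 2C_{\mu^k}(R/2)$. Given $\varepsilon>0$, choose $R$ with $P_\mu(-R/2)+C_\mu(R/2)<\varepsilon$, which is possible since $\mu\in\mathcal P_1$. Pointwise convergence at these two points then gives the same bound $\le 2\varepsilon$ for all large $k$, which establishes uniform integrability. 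Weak convergence plus uniform integrability of the first moments is equivalent to $W_1$-convergence; alternatively, one can see this directly through \eqref{eq:WassQuantile} by dominated convergence of $|F_{\mu^k}-F_\mu|$ on $[-R,R]$, with the tails controlled by the above estimate. Finally, uniform convergence trivially implies pointwise convergence, so all three statements are equivalent.
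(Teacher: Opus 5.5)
Your proof is correct. The paper itself gives no proof of Lemma~\ref{lem:put}: it is recorded in the preliminaries as a standard fact, with pointers to the potential-function literature. So there is no argument of the paper's to compare against, and yours supplies one.

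The forward direction is right. Kantorovich--Rubinstein duality applied to the $1$-Lipschitz maps $t\mapsto(x-t)^+$ and $t\mapsto t$ gives both the sup bound and barycentre convergence. Equivalently, $P_\eta(x)=\int_{-\infty}^x F_\eta(s)\,ds$ combined with \eqref{eq:WassQuantile} gives the sup bound in one line.

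In the converse you split the roles correctly:
\begin{itemize}
\item Pointwise convergence of the puts yields weak convergence, through your difference-quotient sandwich, and it also controls the left tail.
\item The barycentre hypothesis enters only through put--call parity, to control the right tail. It cannot be dropped: $\mu^k=(1-\tfrac1k)\delta_0+\tfrac1k\delta_k$ has $P_{\mu^k}\to P_{\delta_0}$ pointwise, but $\bar\mu^k=1$ for all $k$.
\end{itemize}

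Your alternative ending via \eqref{eq:WassQuantile} is in fact the cleaner one. Since $\int_{-\infty}^{-R}F_\eta(s)\,ds=P_\eta(-R)$ and $\int_R^{\infty}(1-F_\eta(s))\,ds=\int(t-R)^+\,\eta(dt)$, the tails of $\int|F_{\mu^k}-F_\mu|$ are bounded directly by put and call values. This removes the need for a separate uniform-integrability step.

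Two small remarks.
\begin{itemize}
\item Uniform integrability also requires handling the finitely many $k$ below your threshold. This is trivial, since each $\mu^k\in\mathcal P_1$.
\item You may read ``equivalently with uniform convergence'' as uniform convergence of the puts \emph{alone}. In that case add one line: $P_\eta(x)-x+\bar\eta=\int(t-x)^+\,\eta(dt)\to0$ as $x\to+\infty$. Hence $\lim_{x\to+\infty}(P_{\mu^k}-P_\mu)(x)=\bar\mu-\bar\mu^k$, so $\sup_x|P_{\mu^k}-P_\mu|\ge|\bar\mu^k-\bar\mu|$, and uniform convergence already forces $\ba(\mu^k)\to\ba(\mu)$.
\end{itemize}
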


\paragraph{Stochastic orders and (super)martingale couplings.}
For $\eta,\chi\in\mathcal M$, we recall:
\begin{itemize}
\item $\eta\le \chi$ if $\int f\,d\eta\le\int f\,d\chi$ for all non-negative and bounded  $f$;
\item $\eta\cv\chi$ if $\eta,\chi\in\mathcal M_1$, $\eta(\mathbb R)=\chi(\mathbb R)$ and $\int f\,d\eta\le\int f\,d\chi$ for all convex $f$;
\item $\eta\cvd\chi$ if $\eta,\chi\in\mathcal M_1$,  $\eta(\mathbb R)=\chi(\mathbb R)$ and $\int f\,d\eta\le\int f\,d\chi$ for all convex nonincreasing $f$.
\end{itemize}
For measures of equal mass, one has the characterization
\[
\eta\cvd\chi \quad\Longleftrightarrow\quad P_\eta\le P_\chi \ \text{on }\mathbb R,
\]
and if additionally $\overline\eta=\overline\chi$, then $\eta\cv\chi$.
Further, given $\mu,\nu\in\mathcal M_1$ with $\mu(\mathbb R)=\nu(\mathbb R)$, recall that $\Pi_M(\mu,\nu)\subset \Pi(\mu,\nu)$
(resp. $\Pi_S(\mu,\nu)\subset \Pi(\mu,\nu)$) denotes the collection of martingale
(resp. supermartingale) couplings between $\mu$ and $\nu$.

\begin{theorem}[Strassen-type feasibility]\label{thm:strassen}
Let $\mu,\nu\in\mathcal M_1$ with $\mu(\mathbb R)=\nu(\mathbb R)$. Then
\[
\Pi_S(\mu,\nu)\neq\emptyset\ \Longleftrightarrow\ \mu\cvd\nu,
\qquad
\Pi_M(\mu,\nu)\neq\emptyset\ \Longleftrightarrow\ \mu\cv\nu.
\]
\end{theorem}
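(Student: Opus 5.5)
Necessity is a direct Jensen argument; for sufficiency I would use Strassen's classical theorem for the martingale case and reduce the supermartingale case to it. After normalising, assume $\mu(\R)=\nu(\R)=1$; the case of zero mass is trivial.

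\emph{Necessity.} Suppose $\pi=\mu(dx)\pi_x(dy)\in\Pi_S(\mu,\nu)$ and let $f$ be convex and nonincreasing. By Jensen and then monotonicity, since $\ba(\pi_x)\le x$ for $\mu$-a.e.\ $x$, we get $\int f\,d\pi_x\ge f(\ba(\pi_x))\ge f(x)$. Integrating against $\mu$ gives $\int f\,d\nu\ge\int f\,d\mu$, so $\mu\cvd\nu$. In the martingale case equality $\ba(\pi_x)=x$ holds, so no monotonicity is needed and every convex $f$ is allowed.

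\emph{Sufficiency, martingale case.} This is Strassen's theorem, which I would simply invoke. If a self-contained route is wanted, approximate $\mu$ and $\nu$ by finitely supported measures in convex order, for instance via piecewise-linear interpolation of $U_\mu,U_\nu$ on finer grids. For such measures, build martingale couplings by the elementary finite construction of Strassen/Kellerer. Then pass to the limit: the approximating couplings are tight, and the martingale constraint $\int (y-x)g(x)\,\pi(dx,dy)=0$ for bounded continuous $g$ survives weak limits thanks to uniform integrability of first moments.

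\emph{Sufficiency, supermartingale case.} This is the main step. Given $\mu\cvd\nu$, i.e.\ $P_\mu\le P_\nu$, I would construct an intermediate measure $\theta$ with $\theta\le_{\rm st}\mu$, in the sense that $\theta$ lies stochastically below $\mu$ (first-order dominance), and $\theta\cv\nu$. Then $\Pi_S(\mu,\nu)\neq\emptyset$ follows by composition. Take the comonotone (quantile) coupling of $\mu$ and $\theta$, which moves mass only downward, and glue it with a martingale coupling $\theta\to\nu$ from the previous step. The composed kernel satisfies $\ba(\pi_x)=\int z\,\kappa_x(dz)\le x$, where $\kappa_x$ is the first kernel, so $\pi$ is a supermartingale coupling.

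To build $\theta$, I would use the put function $P_\theta:=\max(P_\mu, P_\nu-\ldots)$ construction, or more concretely the standard one: let $\theta$ be the measure whose put function is the largest convex minorant candidate $\tilde P:=\sup\{P\le P_\nu: P\text{ convex},\ P\ge P_\mu \text{ on } (-\infty,a],\ \ldots\}$. A cleaner choice is to shift: set $\theta:=\mu$ on $(a,\infty)$ with part of the mass moved left. Then pick $a$, or a quantile threshold $u^*$, so that $\overline\theta=\overline\nu$ while $P_\theta\le P_\nu$.

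Concretely, let $\theta$ be the law of $\min(F_\mu^{-1}(U),F_\nu^{-1}(U))$ or similar. The expected obstacle is verifying $P_\theta\le P_\nu$ together with matching means. I would instead try $P_\theta:=$ the convex hull of $\min(P_\nu, P_\mu+ (\overline\mu-\overline\nu))$-type functions. Failing that, I would fall back on a Hahn–Banach/minimax argument on the closed convex set $\Pi_S(\mu,\nu)$ of couplings in $\Pi(\mu,\nu)$, with dual test functions $\psi(x)(x-y)$, $\psi\ge0$. That route reduces to showing $\sup_\pi\int \psi(x)(x-y)\,d\pi\ge0$ fails only if some convex nonincreasing test separates $\mu$ and $\nu$, as in Strassen's original proof.
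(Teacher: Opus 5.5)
The paper does not prove this statement; it recalls it as Strassen's classical theorem. Your necessity argument is correct, and so is your appeal to Strassen for the martingale case. Your reduction of the supermartingale case is also a sound strategy. If $\theta\le_{\rm st}\mu$ and $\theta\cv\nu$, then gluing the comonotone coupling of $(\mu,\theta)$, which only moves mass downward, with a martingale coupling of $(\theta,\nu)$ does give an element of $\Pi_S(\mu,\nu)$.

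\textbf{The gap.} You never establish that such a $\theta$ exists, and that existence is the entire content of the supermartingale case. The candidates you name concretely do not work. Take $\mu=\frac12(\delta_0+\delta_3)$ and $\nu=\frac14\delta_{-4}+\frac34\delta_{5/2}$. One checks $P_\mu\le P_\nu$, with $\bar\mu=\frac32$, $\bar\nu=\frac78$ and $\Delta:=\bar\mu-\bar\nu=\frac58$.
\begin{itemize}
\item The law of $\min(F_\mu^{-1}(U),F_\nu^{-1}(U))$ has mean $\frac14\neq\bar\nu$, so it is not $\cv\nu$.
\item The largest convex minorant of $\min(P_\nu,P_\mu+\Delta)$ is the put function of a measure $\rho$ with $\rho\cv\nu$. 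However, its slope on $[0,\frac52)$ is $\frac25$, so $F_\rho=\frac25<\frac12=F_\mu$ there and $\rho\not\le_{\rm st}\mu$.
\item When $\Delta>0$, no constant shift $P_\mu\vee(P_\nu-s)$ works. Its barycentre is $\min(\bar\mu,\bar\nu+s)$, which equals $\bar\nu$ only for $s=0$. That choice returns $\theta=\nu$, and here $\nu\not\le_{\rm st}\mu$. The choice $s=\Delta$ is the lift of Lemma~\ref{lem:convex-lift}, whose barycentre is $\bar\mu$.
\item The ``shift'' variant does not say which mass moves where, and that is exactly the point to settle.
\item The Hahn--Banach fallback only restates what has to be proved.
\end{itemize}

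\textbf{The fix.} The missing idea is to truncate from above by a constant. For $X\sim\mu$, the map $c\mapsto\mathbb E[X\wedge c]$ is $1$-Lipschitz and nondecreasing. It tends to $-\infty$ as $c\to-\infty$ and to $\bar\mu\ge\bar\nu$ as $c\to+\infty$. Choose $c\in(-\infty,+\infty]$ with $\mathbb E[X\wedge c]=\bar\nu$ (take $c=+\infty$ if $\bar\mu=\bar\nu$), and set $\theta:=\mathrm{Law}(X\wedge c)$.
\begin{itemize}
\item For $t\le c$ one has $(t-X\wedge c)^+=(t-X)^+$, so $P_\theta(t)=P_\mu(t)\le P_\nu(t)$.
\item For $t\ge c$ one has $P_\theta(t)=t-\bar\nu\le P_\nu(t)$ by Jensen.
\end{itemize}
Hence $\theta\cv\nu$. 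In put-function form, $P_\theta=P_\mu\vee(\,\cdot\,-\bar\nu)$. So the right object in your first guess is the affine asymptote of $P_\nu$, not a shift of $P_\nu$.

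The downward coupling is the deterministic map $x\mapsto x\wedge c$. Composing it with a martingale kernel $(M_z)$ from $\theta$ to $\nu$ gives $\pi_x=M_{x\wedge c}$, whose barycentre is $x\wedge c\le x$. In the example above, $c=\frac74$ and $\theta=\frac12(\delta_0+\delta_{7/4})$. With this $\theta$, your argument becomes a complete reduction of the supermartingale case to the martingale Strassen theorem.
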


\section{Main results}\label{sec:main}

\subsection{Approximation in adapted Wasserstein topology}

Throughout, let $(\mu^k,\nu^k)_{k\ge1}\subset\Pc_r\times\Pc_r$ and $(\mu,\nu)\in\Pc_r\times\Pc_r$ satisfy
\[
W_r(\mu^k,\mu)+ W_r(\nu^k,\nu)\to0 \qquad\text{and}\qquad \mu^k\cvd \nu^k\ \text{for all }k\ge1.
\]
Without any risk of confusion, we write equally that 
$(\mu^k,\nu^k)_{k\ge1}$ converge to $(\mu,\nu)$ in $W_r$ or $(\mu^k,\nu^k) \to (\mu,\nu)$.
Then it holds $\mu\cvd\nu$ by Lemma~\ref{lem:put}.

\begin{theorem}[Approximation of supermartingale couplings]\label{thm:approx}
Let $(\mu^k,\nu^k)_{k\ge1}$ converge to $(\mu,\nu)$ in $W_r$ and assume $\mu^k\cvd\nu^k$ for all $k\ge1$.
Then for every $\pi\in\Pi_S(\mu,\nu)$,  there exists a sequence $\pi^k\in\Pi_S(\mu^k,\nu^k)$ such that
\[
\mathcal{AW}_r(\pi^k,\pi)\longrightarrow 0,\qquad k\to\infty.
\]
\end{theorem}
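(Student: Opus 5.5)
We follow the strategy of \cite[Theorem~2.6]{BJMP1} for martingale couplings and adapt each step to the decreasing convex order.

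\textbf{Step 1: reduction to finitely supported $\mu$ and a nice $\pi$.} We first show that it suffices to approximate a dense class of couplings. Since $\mathcal{AW}_r$ is a metric, a diagonal argument allows us to replace $\pi$ by some $\tilde\pi\in\Pi_S(\mu,\nu)$ with $\mathcal{AW}_r(\tilde\pi,\pi)\le\eps$. This reduction only helps if such approximants exist with the \emph{same} marginals, and the second marginal is the delicate point. So we instead localise: we quantise $\mu$ into finitely many cells $(I_j)_{j\le N}$ and define the disintegrated pieces $\nu_j:=\int_{I_j}\pi_x\,\mu(\d x)$. Then $\mu|_{I_j}\cvd\nu_j$, each $\nu_j$ is a sub-probability measure, and $\sum_j\nu_j=\nu$. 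For each $k$ we split $\mu^k$ into pieces $\mu^k_j$ of mass $\mu(I_j)$ via a quantile coupling with $\mu$, so that $W_r(\mu^k_j,\mu|_{I_j})\to0$.

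\textbf{Step 2: constructing $\nu^k_j$.} The core of the argument is to decompose $\nu^k=\sum_j\nu^k_j$ so that $\nu^k_j\to\nu_j$ in $W_r$ and $\mu^k_j\cvd\nu^k_j$ for every $j$. We plan to take a coupling $\gamma^k\in\Pi(\nu,\nu^k)$ that is $W_r$-optimal and set $\hat\nu^k_j:=\int\gamma^k_y\,\nu_j(\d y)$. This gives $\sum_j\hat\nu^k_j=\nu^k$ and $W_r(\hat\nu^k_j,\nu_j)\to0$, but the order $\mu^k_j\cvd\hat\nu^k_j$ may fail by a small amount: by Lemma~\ref{lem:put}, $P_{\mu^k_j}-P_{\hat\nu^k_j}\le\delta_k\to0$. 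We repair this as follows. First, we reserve a small fraction $\eps$ of mass in each cell, where the order is strict with slack: after mixing $\pi$ with a coupling whose kernels are slightly spread out and shifted down, the put-function gap $P_{\nu_j}-P_{\mu|_{I_j}}$ is bounded below by a positive amount on compacts and grows at $-\infty$. We then use the global relation $\mu^k\cvd\nu^k$ to transfer mass between pieces. Concretely, we build the pieces sequentially, taking the shadow-type construction of \cite{BayDengNor_1} of $\mu^k_j$ in the remainder of $\nu^k$, and show that these shadows converge to $\nu_j$. This requires a continuity of supermartingale shadows under $W_1$-perturbations of both arguments, which we would prove via the put-function characterisation. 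Completing sub-probability measures under $\cvd$ (rather than $\cv$) is the main obstacle. The barycentre is no longer conserved, so the remaining mass can absorb downward drift, and we must check that leftover mass never has to move upward.

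\textbf{Step 3: inner couplings and conclusion.} For each $j$ and $k$ we choose $\pi^k_j\in\Pi_S(\mu^k_j,\nu^k_j)$ close to $\pi|_{I_j\times\R}$ in $\mathcal{AW}_r$. When $\mu|_{I_j}$ is (approximately) a Dirac mass at $x_j$, this is straightforward: any supermartingale kernel has conditional laws close to $\nu_j/\mu(I_j)$ in $W_r$, up to the transport cost of the barycentre constraint, which is controlled by the $W_r$-distances between the marginals. We then set $\pi^k:=\sum_j\pi^k_j\in\Pi_S(\mu^k,\nu^k)$. For the bound on $\mathcal{AW}_r(\pi^k,\pi)$, we couple $\mu^k_j$ with $\mu|_{I_j}$; the estimate has three terms: the mesh of the partition, $\sup_j W_r(\nu^k_j,\nu_j)$, and the $\mathcal{AW}_r$-error of replacing $\pi$ by kernels that are constant on cells. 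The last term is handled by a Lusin-type approximation of $x\mapsto\pi_x$ by a continuous map. Letting first $k\to\infty$, then the mesh and $\eps$ tend to $0$, and finally passing to a diagonal sequence, we obtain $\mathcal{AW}_r(\pi^k,\pi)\to0$.
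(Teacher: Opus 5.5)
Your Step~2 carries essentially the whole difficulty of the theorem, and the mechanism you propose for it does not work. You build the pieces sequentially as supermartingale shadows of $\mu^k_j$ in the remainder of $\nu^k$. Under whatever continuity of shadows you prove, these converge to the successive shadows of $\mu|_{I_j}$ in the remainder of $\nu$. Those limits depend only on $(\mu,\nu)$ and on the order in which you process the cells: they are the cell-pieces of one canonical shadow coupling. The pieces $\nu_j=\int_{I_j}\pi_x\,\mu(\d x)$ of an arbitrary $\pi\in\Pi_S(\mu,\nu)$ depend on $\pi$. So as soon as $\Pi_S(\mu,\nu)$ contains two couplings with different cell-pieces, $\nu^k_j\to\nu_j$ cannot hold for both. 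The global relation $\mu^k\cvd\nu^k$ gives no control on how $\nu^k$ must be split across arbitrary cells.

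Your slack device does not supply that control either, for three reasons.
\begin{enumerate}[(i)]
\item Inside $\Pi_S(\mu,\nu)$ there may be no slack at all. At points where $P_\mu=P_\nu$ no mass can cross, and the coupling is forced to be martingale to the left of $x^*$. Think of the diagonal part $\mu_{-1}=\nu_{-1}$, or of the case $\mu\cv\nu$, which already contains the full martingale theorem of \cite{BJMP1}.
\item Spreading out and shifting down the kernels replaces $\nu$ by a second marginal $\nu'$ with $\nu\cvd\nu'$ and $\nu'\neq\nu$. The modified pieces then no longer decompose $\nu$, so they cannot be limits of a decomposition of $\nu^k$. Nor does any supermartingale step lead from a target near $\nu'$ back to $\nu^k$.
\item $P_{\nu_j}-P_{\mu|_{I_j}}$ always vanishes at $-\infty$. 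The only room is at $+\infty$, where it equals the barycentre defect.
\end{enumerate}

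What is missing is the structure the paper relies on. It first decomposes into Nutz--Stebegg irreducible components (Lemma~\ref{lem:irreducible}). The decisive property is that for every $\Gamma\in\Pi_S(\mu,\nu)$ the second marginal of $\Gamma|_{I_n\times\R}$ equals $\nu_n$, independently of $\Gamma$. This is exactly what makes Proposition~\ref{prop:decompo1} work: quantile blocks of $\mu$ are pushed through an arbitrary $\Gamma^k\in\Pi_S(\mu^k,\nu^k)$. The same property fails for your arbitrary cells.

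After that reduction, the diagonal part is trivial. The martingale components are handled by the convex lift of Lemma~\ref{lem:convex-lift}, the theorem of \cite{BJMP1}, and a final supermartingale kernel. Only the strict component $(x^*,+\infty)$ needs a new construction; there $\Delta>0$ gives genuine slack at $+\infty$ and $P_\nu-P_\mu>0$ on compacts. The paper's steps on that component are:
\begin{enumerate}[(a)]
\item contract $\pi$ toward the diagonal, in the opposite direction to yours, so that $\nu^{R,\alpha}\cvd\nu$;
\item use the intermediate target $\nu^{R,\alpha,k}=\nu^k\wedge_{\rm cd}(\mu^k\vee_{\rm cd}\nu^{R,\alpha})$;
\item localise, then correct barycentres using free mass on the left;
\item complete the residual marginals via the $\eps$-mixture, comparing put functions on $J$ and on $J^c$;
\item compose with $M^k\in\Pi_S(\lambda^k,\nu^k)$.
\end{enumerate}

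Separately, your Step~3 claim that any supermartingale coupling of a nearly Dirac cell has kernels close to $\nu_j/\mu(I_j)$ is false. Take $\mu^k=\frac12(\delta_{-1/k}+\delta_{1/k})$ with kernels $\frac12(\delta_{-2-1/k}+\delta_{2-1/k})$ and $\frac12(\delta_{-1+1/k}+\delta_{1+1/k})$. These are martingale couplings whose second marginals converge to $\nu=\frac14(\delta_{-2}+\delta_{-1}+\delta_1+\delta_2)$. Yet $\mathcal{AW}_1(\pi^k,\delta_0\otimes\nu)\to\frac12$. The inner coupling has to be chosen carefully, not taken arbitrarily.
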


\begin{remark}[Context and novelty]
Theorem~\ref{thm:approx} is the supermartingale analogue of the martingale approximation theorem of \cite[Theorem~2.6]{BJMP1}.
In the martingale case, one can work on the irreducible decomposition and glue approximations without losing the barycentre constraint.
For supermartingales, an additional difficulty is that the constraint is inequality-valued; in particular, the gluing step requires a careful barycentre adjustment using free mass to the left of the compact region.

More precisely, when completing the sub-probability measures $\hat{\mu}^k$ and $\tilde{\nu}^k$
(see Section~\ref{subsec:completion-gluing}) to recover probability measures with the prescribed marginals,
one needs to prove that the put potential of the residual part of $\mu^k$ is dominated by that of the residual part of the second marginal on the whole real line.
The argument is decomposed into the comparison on an interval $J$ and on its complement $J^c$.
On $J$ the proof is straightforward.
On $J^c$, unlike in the martingale case of \cite{BJMP1}, both sides of the comparison are nontrivial, so one must return to the definitions of the auxiliary measures and establish the comparison directly.
\end{remark}

\subsection{Stability of the WSOT}
\begin{theorem}\label{thm:stability}
Fix $r\in[1,\infty)$, and let X and Y be Polish subspaces of $\R$. Let $C:X \times\sP_r(Y)\to\R$ be measurable, continuous and convex in the second argument. Assume that there exists $K>0$ such that
$$
\lvert C(x,m)\lvert\leq K\left(1+\lvert x\lvert^r+\int_{Y}\lvert y\lvert^rm(dy)\right),\qquad \forall (x,m)\in X\times\sP_r(Y).
$$
Fix $\mu\in\sP_r(X),\nu\in\sP_r(Y)$ with $\mu\cvd \nu$. Let $\mu^k\in\sP_r(X),\nu^k\in\sP_r(Y)$ satisfy $\mu^k\cvd\nu^k$, and  let $\mu^k,\nu^k$ converge to $\mu$ and $\nu$ under $W_r$, respectively. Then
\begin{equation}\label{eq:thmValueconvergence}
\lim_{k\to\infty} V^C_S(\mu^k,\nu^k) =  V^C_S(\mu,\nu)
\end{equation}
holds under one of the following conditions:
 \begin{itemize}
        \item[(A'')] $C$ is continuous;
        \item[(B'')] $\mu^k$ converges strongly to $\mu$. 
    \end{itemize}
For $k\in\N$, let $\pi^{*,k}\in\Pi_S(\mu^k,\nu^k)$ be a minimiser of $V^C_S(\mu^k,\nu^k)$. Then any accumulation point of $(\pi^{*,k})_{k\in\N}$ (with respect to the weak convergence) is a minimiser of $V^C_S(\mu,\nu)$. If $V^C_S(\mu,\nu)$ has a unique minimiser $\pi^*\in\Pi_S(\mu,\nu)$, then 
\begin{equation}\label{eq:thmCouplingconvergence}
\pi^{*,k}\xrightarrow{k\to\infty}\pi^*\quad\textrm{in } W_r,
\end{equation}
If moreover, $C$ is strictly convex in the second argument, then the convergence in \eqref{eq:thmCouplingconvergence} holds in $\mathcal{AW}_r$.
\end{theorem}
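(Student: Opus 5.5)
The proof splits into two inequalities: an upper bound $\limsup_k V^C_S(\mu^k,\nu^k)\le V^C_S(\mu,\nu)$ obtained from Theorem~\ref{thm:approx}, and a lower bound $\liminf_k V^C_S(\mu^k,\nu^k)\ge V^C_S(\mu,\nu)$ obtained by compactness and lower semicontinuity. The statements about minimisers then follow from these two bounds.

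\emph{Upper bound.} Fix $\varepsilon>0$ and choose $\pi\in\Pi_S(\mu,\nu)$ with $\int C(x,\pi_x)\,\mu(dx)\le V^C_S(\mu,\nu)+\varepsilon$. By Theorem~\ref{thm:approx} there are $\pi^k\in\Pi_S(\mu^k,\nu^k)$ with $\mathcal{AW}_r(\pi^k,\pi)\to0$. Equivalently, $J(\pi^k)=\mu^k(dx)\delta_{\pi^k_x}(dp)$ converges to $J(\pi)$ in $W_r$ on $X\times\sP_r(Y)$ (the $r$-version of the embedding identity for $r=1$).

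Under (A''), the map $(x,p)\mapsto C(x,p)$ is continuous on $X\times\sP_r(Y)$. The growth bound says $|C(x,p)|\le K(1+|x|^r+\int|y|^r p(dy))$, which is $K(1+d((x,p),(0,\delta_0))^r)$ for the product metric. Hence $W_r$-convergence of $J(\pi^k)$ gives
\[
\int C\,dJ(\pi^k)\to\int C\,dJ(\pi).
\]
Under (B''), $C$ is only continuous in $p$, so this argument needs a replacement. I would use strong convergence of $\mu^k\to\mu$ together with a Lusin/Scorza-Dragoni type argument for Carathéodory integrands: convergence of $\int f(x)g(p)\,dJ(\pi^k)$ for bounded measurable $f$ and continuous $g$ extends to all Carathéodory functions of controlled growth. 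This is the standard lemma in \cite{BJMP1}; I would reproduce it, truncating via the $r$-growth and uniform integrability coming from $W_r$-convergence. In either case
\[
\limsup_k V^C_S(\mu^k,\nu^k)\le V^C_S(\mu,\nu)+\varepsilon.
\]

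\emph{Lower bound.} Take a subsequence realising the $\liminf$ and near-optimal $\pi^k\in\Pi_S(\mu^k,\nu^k)$. The marginals converge in $W_r$, so $\{\pi^k\}$ is $W_r$-relatively compact; pass to a further subsequence with $\pi^k\to\pi$ weakly, indeed in $W_r$. The limit $\pi$ lies in $\Pi(\mu,\nu)$. It is a supermartingale coupling because testing $\int h(x)(y-x)\,d\pi^k\le0$ against bounded continuous $h\ge0$ passes to the limit by uniform integrability of $|y|$ and $|x|$.

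Next I invoke the lower semicontinuity result of weak transport \cite{backhoff2019existence}: when $C$ is lower semicontinuous and convex in $p$ and bounded below by the growth condition, the map $\pi\mapsto\int C(x,\pi_x)\,\mu(dx)$ is lower semicontinuous under weak convergence (with $W_r$ control for the growth). The key step is to view $J(\pi^k)$ as tight, extract a limit $P$ on $X\times\sP_r(Y)$, and apply Jensen via convexity, since the intensity of $P$ given $x$ is $\pi_x$. Under (B''), where $C$ is not jointly lsc, I need the joint lower semicontinuity replaced. Strong convergence of the first marginals gives stable convergence, and integrands measurable in $x$ and continuous in $p$ are lsc under stable convergence. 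This yields $\liminf\ge V^C_S(\mu,\nu)$.

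\emph{Minimisers.} Applying the lower-bound argument to the minimisers $\pi^{*,k}$ shows that any accumulation point $\pi$ satisfies $\int C(x,\pi_x)\,d\mu\le\liminf V^C_S(\mu^k,\nu^k)=V^C_S(\mu,\nu)$, so $\pi$ is optimal. If the minimiser $\pi^*$ is unique, $W_r$-relative compactness gives $\pi^{*,k}\to\pi^*$ in $W_r$.

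For $\mathcal{AW}_r$ convergence under strict convexity, take any limit point $P$ of $J(\pi^{*,k})$. The chain
\[
V^C_S(\mu,\nu)=\lim_k\int C\,dJ(\pi^{*,k})\ge\int C\,dP\ge\int C(x,\pi^*_x)\,\mu(dx)
\]
forces equality in the Jensen step. Strict convexity then makes $P_x=\delta_{\pi^*_x}$, that is, $P=J(\pi^*)$. Together with convergence of $r$-th moments, this gives $\mathcal{AW}_r(\pi^{*,k},\pi^*)\to0$.

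The main obstacle is case (B'') in both directions, namely handling integrands that are merely measurable in $x$. This relies on the stable-convergence framework, which I would import from \cite{BJMP1,backhoff2019existence}.
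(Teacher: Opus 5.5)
Your proposal is correct and follows essentially the paper's route. The paper proves the theorem by combining a lower-semicontinuity result, which it obtains by following \cite{BJMP} Theorem 2.4 verbatim: compactness of $J(\pi^k)$, Jensen via convexity, and strong convergence of the first marginals in case (B''). It pairs this with an upper-semicontinuity result obtained from Theorem~\ref{thm:approx} together with the portmanteau/Carath\'eodory arguments of \cite{BJMP} Theorem 2.8, which is exactly your decomposition. The only cosmetic difference is that the paper encodes the supermartingale constraint in the modified cost $\tilde C$ (equal to $+\infty$ off the closed constraint set), whereas you verify directly that the constraint passes to the limit; these amount to the same thing.
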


\begin{remark}\label{rem:smotCinfty}
    Let $r\geq 1$ and $X, Y$ be Polish subspaces of $\R$. For a measurable $C:X\times\sP_r(Y)\to\R\cup\{\infty\}$, consider $\tilde C:X\times\sP_r(Y)\to\R\cup\{\infty\}$ defined by $\tilde C(x,m)=C(x,m)\I_{\{\int_Y ym(dy)\leq x\}} +\infty \I_{\{\int_Y ym(dy)> x\}}$. If $C$ is convex in its second argument (respectively is lower semicontinuous in either its second argument or in both arguments, respectively satisfies $C(x,m)\geq -K\left(1+\lvert  x\lvert^r+\int_Y \lvert y\lvert ^rm(dy)\right)$), then so does $\tilde C$. Indeed, the identity map $\R\ni y\to y\in\R$ belongs to $\Phi_r(\R)=\{h:\R\to\R:h\textrm{ is continuous and }\exists\alpha>0, \forall x\in\R, \lvert h(x)\lvert\leq\alpha(1+\lvert x\lvert^r\}$, and thus $\{(x,m)\in X\times\sP_r(Y):\int_Y ym(dy)\le x\}\subset X\times\sP_r(Y)$ is closed. Also, it is easy to see that, for a fixed $x\in\R$, $\{m\in\sP_r(Y):x=\int_Y y m(dy)\}$ is convex.
\end{remark}

The proof of Theorem \ref{thm:stability} is fulfilled by the following two propositions. 

\begin{proposition}[Attainment and lower semicontinuity]\label{prop:lsc}
Fix $r\in[1,\infty)$ and let $X, Y$ be Polish subspace of $\R$. Assume that $C:X\times\sP_r(Y) \to \R$ is measurable, convex and lower semicontinous in the second argument, and there exists  $K>0$ so that 
    $$
    C(x,m)\geq -K\left(1+\lvert x\lvert^r+\int_Y\lvert y\lvert^rm(dy)\right),\qquad \forall (x,m)\in X\times\sP_r(Y).
    $$
    For $\mu\in\sP_r(X),\nu\in\sP_r(Y)$ with $\mu\cvd\nu$,  there exists a minimizer $\pi^*\in\Pi_S(\mu,\nu)$ for $V_S^C(\mu,\nu)$. If $C$ is strictly convex in the second argument and $V^C_S(\mu,\nu)$ is finite, then the minimizer is unique.

    For $k\in\N$, let $\mu^k\in\sP_r(X),\nu^k\in\sP_r(Y)$ with $\mu^k\cvd\nu^k$ converge in $W_r$ to $\mu$ and $\nu$, respectively. If one of the following holds:
    \begin{itemize}
        \item[(A)] $C$ is lower semicontinuous in both arguments;
        \item[(B)] $\mu^k$ converges strongly to $\mu$. 
    \end{itemize}
    Then $$V_S^C(\mu,\nu)\leq\liminf_{k\to\infty}V_S^C(\mu^k,\nu^k).$$
\end{proposition}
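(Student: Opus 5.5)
The plan is to reduce everything to the weak optimal transport theory of Backhoff-Veraguas, Beiglb\"ock, Pammer et al.\ (e.g.\ \cite{backhoff2019existence, BackPam}), by absorbing the supermartingale constraint into the cost as in Remark~\ref{rem:smotCinfty}. Set $\tilde C(x,m)=C(x,m)$ if $\int_Y y\,m(dy)\le x$ and $\tilde C(x,m)=+\infty$ otherwise. By Remark~\ref{rem:smotCinfty}, $\tilde C$ is again measurable, convex and lower semicontinuous in $m$, and it satisfies the same lower bound $-K(1+|x|^r+\int|y|^r m(dy))$. In case (A), $\tilde C$ is also jointly lower semicontinuous, since $\{(x,m):\int y\,m(dy)\le x\}$ is closed in $X\times\sP_r(Y)$. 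Moreover, for every $\pi\in\Pi(\mu,\nu)$ we have $\int \tilde C(x,\pi_x)\mu(dx)<\infty$ only if $\pi\in\Pi_S(\mu,\nu)$. Hence $V_S^C(\mu,\nu)=V^{\tilde C}(\mu,\nu)$, the unconstrained weak transport value in \eqref{eq:wot_problem}, and minimisers correspond. Since $\mu\cvd\nu$, Theorem~\ref{thm:strassen} shows $\Pi_S(\mu,\nu)\neq\emptyset$.

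\textbf{Existence.} I would give a direct argument rather than rely only on citation. Take a minimising sequence $\pi^n\in\Pi_S(\mu,\nu)$ and pass to the embedding $J(\pi^n)=\mu(dx)\delta_{\pi^n_x}(dp)\in\Pc(X\times\sP_r(Y))$. Its first marginal is fixed, and its intensity $\int p\,J(\pi^n)(dx,dp)=\nu$ is fixed, so $\{J(\pi^n)\}$ is relatively compact in $W_r$ on $X\times\sP_r(Y)$. Let $P$ be a limit point. Then $\int \tilde C\,dP\le\liminf\int\tilde C\,dJ(\pi^n)$, using lower semicontinuity in $p$ together with the lower bound. When only lower semicontinuity in $p$ is available, this uses the fact that the first marginal is fixed, so the standard argument of \cite{backhoff2019existence} via Ces\`aro/Koml\'os-type averaging or stable convergence applies. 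Next, define $\pi^*:=\mu(dx)\,\hat\pi_x$ with $\hat\pi_x:=\int p\,P_x(dp)$, the barycentre of the disintegration of $P$. Then $\pi^*\in\Pi(\mu,\nu)$. By Jensen and convexity of $\tilde C(x,\cdot)$, we get $\int\tilde C(x,\hat\pi_x)\mu(dx)\le\int\tilde C\,dP$. Since the value is finite, $\pi^*\in\Pi_S(\mu,\nu)$; alternatively, note that $\int y\,\hat\pi_x(dy)=\int\!\int y\,p(dy)P_x(dp)\le x$. So $\pi^*$ is a minimiser. For uniqueness, suppose $\pi^1,\pi^2$ are distinct minimisers with finite value. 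Then $\tfrac12(\pi^1+\pi^2)\in\Pi_S(\mu,\nu)$, because the constraint set is convex. Strict convexity gives a strictly smaller value on the $\mu$-positive set $\{\pi^1_x\neq\pi^2_x\}$, which is a contradiction.

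\textbf{Lower semicontinuity.} Take a subsequence realising the $\liminf$, which we may assume finite, and let $\pi^{*,k}$ be minimisers from the first part. As above, $J(\pi^{*,k})$ is tight in $W_r$, since its marginals $\mu^k$ and intensities $\nu^k$ converge in $W_r$; extract a limit $P$ whose first marginal is $\mu$ and whose intensity is $\nu$. Under (A), joint lower semicontinuity of $\tilde C$ and the lower bound give $\int\tilde C\,dP\le\liminf_k V_S^C(\mu^k,\nu^k)$. The uniform integrability of the negative part comes from $W_r$-convergence of $\mu^k$ and $\nu^k$, because $\int\!\int|y|^r p(dy)\,J(\pi^{*,k})=\int|y|^r d\nu^k$ converges. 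Under (B), strong convergence of the first marginals lets us run the same argument with only lower semicontinuity in $p$. Here I would follow the corresponding result in \cite{BackPam} (lsc of $\pi\mapsto\int C(x,\pi_x)\mu(dx)$ when marginals converge setwise). Finally, averaging $P$ into $\pi=\mu(dx)\hat\pi_x$ as in the existence step gives a supermartingale coupling of $(\mu,\nu)$ with cost at most $\int\tilde C\,dP$. Therefore $V_S^C(\mu,\nu)\le\liminf_k V_S^C(\mu^k,\nu^k)$.

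\textbf{Main obstacle.} I expect the hardest step to be case (B), lower semicontinuity in the $p$-variable only. The $x$-marginals vary, so a plain Portmanteau argument fails. The key is to show that setwise convergence $\mu^k\to\mu$ allows one to replace $\tilde C$ by a sequence of functions, jointly lsc in $(x,p)$, that approximate it on sets of large $\mu^k$-measure uniformly in $k$. This is done via Lusin/Scorza-Dragoni-type truncation; I would take the existing weak transport argument and check that the extended-valued $\tilde C$ poses no extra difficulty.
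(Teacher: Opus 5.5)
Your proposal is correct and follows the paper's route. The paper also absorbs the supermartingale constraint into $\tilde C$ via Remark~\ref{rem:smotCinfty}, identifies $V_S^C(\mu,\nu)=V^{\tilde C}(\mu,\nu)$, and then follows the weak-transport existence and lower-semicontinuity proof of \cite[Theorem 2.4]{BJMP} verbatim, which is the embedding/compactness/barycentre (Jensen) argument you unpack. One small correction: case (B), with setwise convergence of $\mu^k$ and lower semicontinuity only in $m$, is also covered by \cite[Theorem 2.4]{BJMP}, so you should cite that result rather than \cite{BackPam} there.
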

\begin{proof}
    Note that
    $$
    V_S^C(\mu,\nu)=\inf_{\pi\in\Pi_S(\mu,\nu)}\int_X C(x,\pi_x)\mu(dx)=\inf_{\pi\in\Pi(\mu,\nu)}\int_X\tilde C(x,\pi_x)\mu(dx)=V^{\tilde C}(\mu,\nu),
    $$
    where $\tilde C:X\times\sP_r(Y) \to \R$ is defined by $\tilde C(x,m)=C(x,m)\I_{\{\int_Y ym(dy)\leq x\}} +\infty \I_{\{\int_Y ym(dy)> x\}}$; recall Remark \ref{rem:smotCinfty}. Then, by following verbatim the proof of \cite[Theorem 2.4]{BJMP}, one obtains the statements concerning $V^{\tilde C}$ (and thus also $V^{C}_S$).
\end{proof}
\begin{proposition}[Upper semicontinuity]\label{prop:usc}
Fix $r\in[1,\infty)$ and let $X, Y$ be Polish subspace of $\R$. Assume that $C:X\times\sP_r(Y)\to\R$ is measurable, upper semicontinous in the second argument, and there exists  $K>0$ so that 
    $$
    C(x,m)\leq K\left(1+\lvert x\lvert^r+\int_Y\lvert y\lvert^rm(dy)\right),\qquad \forall (x,m)\in X\times\sP_r(Y).
    $$
     For $k\in\N$, let $\mu^k\in\sP_r(X),\nu^k\in\sP_r(Y)$ with $\mu^k\cvd\nu^k$ converge in $W_r$ to $\mu$ and $\nu$, respectively. If one of the following holds: 
    \begin{itemize}
        \item[(A')] $C$ is upper semicontinuous in both arguments;
        \item[(B')] $\mu^k$ converges strongly to $\mu$. 
    \end{itemize}
    Then $$\limsup_{k\to\infty}V_S^C(\mu^k,\nu^k)\leq V_S^C(\mu,\nu).$$
\end{proposition}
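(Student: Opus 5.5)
The approach is to fix an arbitrary $\pi\in\Pi_S(\mu,\nu)$ and compare with a coupling for the approximating marginals. If $V_S^C(\mu,\nu)=+\infty$ there is nothing to prove. The upper growth bound together with $\int|x|^r d\mu+\int|y|^rd\nu<\infty$ gives $V_S^C(\mu,\nu)<+\infty$ in any case. It therefore suffices to show that for every $\pi\in\Pi_S(\mu,\nu)$,
\[
\limsup_{k\to\infty} V_S^C(\mu^k,\nu^k)\le \int_X C(x,\pi_x)\,\mu(dx),
\]
and then take the infimum over $\pi$. By Theorem~\ref{thm:approx} we choose $\pi^k\in\Pi_S(\mu^k,\nu^k)$ with $\mathcal{AW}_r(\pi^k,\pi)\to0$. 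Since $V_S^C(\mu^k,\nu^k)\le\int C(x,\pi^k_x)\mu^k(dx)$, the task reduces to proving
\[
\limsup_k\int C(x,\pi^k_x)\mu^k(dx)\le\int C(x,\pi_x)\mu(dx).
\]

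\textbf{Case (A').} We use the embedding $J(\pi^k)=\mu^k(dx)\delta_{\pi^k_x}(dp)$ on $X\times\sP_r(Y)$. Convergence in $\mathcal{AW}_r$ is exactly $W_r$-convergence of $J(\pi^k)$ to $J(\pi)$, where $\sP_r(Y)$ carries the $W_r$ metric. The integral then reads $\int C(x,p)\,J(\pi^k)(dx,dp)$. Define
\[
g(x,p):=K\Big(1+|x|^r+\int|y|^rp(dy)\Big)-C(x,p)\ge0 .
\]
This $g$ is lower semicontinuous, since $p\mapsto\int|y|^rp(dy)$ is $W_r$-continuous and $C$ is usc. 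By the Portmanteau theorem for nonnegative lsc functions under weak convergence,
\[
\liminf_k\int g\,dJ(\pi^k)\ge\int g\,dJ(\pi).
\]
The growth term converges: $\int(1+|x|^r+\int|y|^rp(dy))\,dJ(\pi^k)=1+\int|x|^rd\mu^k+\int|y|^rd\nu^k$, which tends to the corresponding limit by $W_r$-convergence of the marginals. Subtracting the growth term yields the $\limsup$ inequality.

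\textbf{Case (B').} Only upper semicontinuity in $p$ is available, and the main obstacle is to decouple the $x$-dependence. I would follow the argument of \cite{BJMP} for this case. Strong convergence of $\mu^k$ to $\mu$ (setwise convergence) lets us approximate the measurable map $x\mapsto C(x,\cdot)$ by functions that are continuous in $x$ on large sets, via a Lusin-type argument. The standard tool is the following fact: if $\mu^k\to\mu$ setwise and $J(\pi^k)\to J(\pi)$ weakly, then $\int f\,dJ(\pi^k)\to\int f\,dJ(\pi)$ for every bounded $f$ that is measurable in $x$ and continuous in $p$. Applying this fact gives the following chain:
\begin{itemize}
\item[1.] Reduce to a bounded function by truncating $g$ from above at level $N$, so that $g\wedge N$ is lsc in $p$.
\item[2.] Approximate $g\wedge N$ from below by an increasing sequence of functions that are continuous in $p$ and measurable in $x$, via inf-convolution in $p$ uniformly in $x$.
\item[3.] Apply the fact to each approximant and pass to the limit by monotone convergence.
\item[4.] Control the truncation error by uniform integrability of $1+|x|^r+\int|y|^rp(dy)$ under $J(\pi^k)$, which holds because the $r$-th moments converge.
\end{itemize}

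The step I expect to be delicate is proving the mixed-continuity convergence fact under only setwise convergence of the first marginal. I would prove it by approximating $f$ in $L^1(\mu)$, uniformly in $p$ on compacts, with sums $\sum_i\1_{A_i}(x)h_i(p)$. For these the convergence follows from setwise convergence together with weak convergence of the restricted measures $J(\pi^k)|_{A_i\times\sP_r}$. Tightness in $p$ allows restriction to compacts.
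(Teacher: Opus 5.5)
Your proposal is correct and follows the same route as the paper. You fix $\pi\in\Pi_S(\mu,\nu)$, take $\pi^k\in\Pi_S(\mu^k,\nu^k)$ with $\mathcal{AW}_r(\pi^k,\pi)\to0$ from Theorem~\ref{thm:approx}, and conclude by the Portmanteau-type arguments of \cite{BJMP} applied to $J(\pi^k)\to J(\pi)$; the paper simply cites these arguments, while you spell them out.

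Your step 4 is unnecessary, and the uniform integrability of the growth term would not control $g-g\wedge N$ anyway, since $-C$ need not be bounded above. It suffices that $\int g\,dJ(\pi^k)\ge\int g\wedge N\,dJ(\pi^k)$, which gives $\liminf_k\int g\,dJ(\pi^k)\ge\int g\wedge N\,dJ(\pi)$ for every $N$; monotone convergence in $N$ then finishes.
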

\begin{proof}
Fix $\pi\in\Pi_S(\mu,\nu)$. Then our approximation result, Theorem \ref{thm:approx}, ensures that there exists a sequence $\pi^k\in\Pi_S(\mu^k,\nu^k)$, $k\in\N$, which converges to $\pi$ in $\mathcal{AW}_r$. Then the result follows from the Portmanteau-type arguments; see \cite[Section A.2]{BJMP}. In particular, given the approximation result,
we can follow verbatim the proof of \cite[Theorem 2.8]{BJMP}.
\end{proof}

\begin{proof}[Proof of Theorem~\ref{thm:stability}]
Combining Proposition \ref{prop:lsc} and Proposition \ref{prop:usc}, we draw the conclusion.
\end{proof}

\subsection{Supermartingale $C$-monotonicity} \label{subsec:mono}

 A by-product of the SOT stability result, is to prove that supermartingale $C$-monotonicity is a necessary and sufficient criterion for optimality: $\pi^* \in \Pi_S(\mu,\nu)$ minimises the WSOT problem if and only if $\mu(dx) \delta_{\pi^*_x}(dp)$ is concentrated on a supermartingale $C$-monotone set. 

\begin{definition}[Supermartingale $C$-monotonicity.] \label{def:supMart_mono}
The Borel set $\Gamma \subset \R \times \Pc_1$ is supermartingale $C$-monotone if and only if there exists $M_0, M_1 \subset \R$, such that for any $N \in \N$, any collection $(x_1, p_1), \cdots, (x_N, p_N) \in \Gamma$ and $q_1, \cdots q_N \in \Pc_1$ such that $\sum_{i=1}^{N} p_i = \sum_{i=1}^{N} q_i$ and 
$$
\begin{aligned}
& \int_{\R} y p_i(dy)= \int_{\R} y q_i(dy), \ \ \forall i \in \mathcal{I}_1; \\
& \int_{\R} y p_i(dy) \geq \int_{\R} y q_i(dy), \ \ \forall i \in \mathcal{I}_0,
\end{aligned}
$$
 we have
 $$
 \sum_{i=1}^N C(x_i, p_i) \leq  \sum_{i=1}^N C(x_i, q_i).
 $$
where, $\mathcal{I}_0=\{i : x_i \in M_0 \}$, $\mathcal{I}_1=\{i : x_i \in M_1 \}$. In the following, we say a coupling $\pi \in \Pi_S(\mu, \nu)$ is Supermartingale $C$-monotone if there exists a Supermartingale $C$-monotone set $\Gamma$ with 
$$
(x, \pi_x) \in \Gamma \ \ \mbox{for } \mu(dx) \mbox{-almost every }x,
$$ 
with $M_0, M_1$ defined as the martingale region of $\pi$ to the left and right of $x^*$ (recall that $x^*$ is defined in Section \ref{sec:irreducible})   In addition, a probability measure $\P \in \Pc(\R \times \Pc_1)$ is called supermartingale $C$-monotone, if $\pi= \hat{I} (\P) \in \Pi_S(\mu,\nu)$ is supermartingale monotone.%; which is supported on a martingale $C$-monotone set, }
\end{definition}
 
 \begin{remark} \label{rmk: fini-opt}
Notice that for the  supermartingale $C$-monotonicity of the support set $\Gamma$, we do not require $\int_{\R} y p_i(dy)= \int_{\R} y q_i(dy)$, for $i \in \mathcal{I}_0$, which is consistent with \cite{NutzStebegg.18}. We then choose  $M_0$ and $M_1$ specifically as in the definition of  supermartingale $C$-monotonicity for coupling $\pi$. We emphasize here that the region of $M_1$ depends only on the marginals, while $M_0$ depends on the specific coupling.

 \end{remark}

 In the following, we shall prove that the above supermartingale $C$-monotonicity is a necessary and sufficient  condition for the optimality of the optimizer.

 We have the following main theorem:
\begin{theorem} \label{thm:monotonicity}
 Let $\mu\cvd \nu \in \Pc_r(\R)$ with $r\geq 1$. Assume that  $C:\R \times\Pc_r(\R) \to \R$ is  measurable, continuous, and convex in the second argument and that there exists  $K>0$ so that 
$$
\forall (x,p) \in \R \times \Pc_r(\R), \ |C(x,p)| \leq K \left( 1 + |x|^r  + \int_{\R} |y|^r  p (dy) \right).
$$
Then  $\pi \in \Pi_S(\mu, \nu)$ is supermartingale $C$-monotone w.r.t. some $\Gamma$ if and only if $\pi$ is optimal for (WSOT).
\end{theorem}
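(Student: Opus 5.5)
Both implications follow the scheme of the martingale case in \cite{BJMP}, now with the supermartingale structure described in Definition~\ref{def:supMart_mono}. I would treat necessity and sufficiency separately.

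\emph{Optimality implies monotonicity.} Let $\pi$ be optimal and set $\P:=J(\pi)=\mu(dx)\delta_{\pi_x}(dp)$. Let $M_1$ be the martingale region determined by the marginals, i.e.\ the part of the irreducible decomposition to the right of $x^*$, where every $\pi\in\Pi_S(\mu,\nu)$ must preserve barycentres. Let $M_0$ be the complementary region. The first step is to read the WSOT problem on $\R\times\Pc_1$ as a classical transport problem: minimise $\int C\,d\Q$ over measures $\Q$ with first marginal $\mu$, with $\int p\,\Q(dx,dp)=\nu$, and with $\int y\,p(dy)\le x$ $\Q$-a.s. I would then apply the finite-cyclical-monotonicity argument of Beiglb\"ock--Griessler type (as used in \cite{BJMP}, and in the spirit of \cite{NutzStebegg.18}). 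If $\P$ were not concentrated on a monotone $\Gamma$, a measurable selection would give a positive-mass family of $N$-tuples $(x_i,p_i)$ together with competitors $q_i$. These competitors satisfy $\sum p_i=\sum q_i$, keep barycentres on $\mathcal I_1$, and lower them on $\mathcal I_0$, while strictly decreasing the cost. Swapping $p_i$ for $q_i$ on a small-mass copy of this family yields a new $\Q$ with the same marginals. The supermartingale constraint is preserved, because lowering barycentres at $x_i\in M_0$ keeps $\int y\,q_i\le\int y\,p_i\le x_i$. The resulting coupling $\hat I(\Q)$ is then admissible, and by convexity of $C$ in $p$ (Jensen after disintegration) its cost is no larger than that of $\Q$, hence strictly smaller than that of $\pi$. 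This contradicts optimality. The technical point is the measurable-selection step, which I would take from \cite{BJMP} essentially verbatim.

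\emph{Monotonicity implies optimality.} This is the harder direction, and it is where Theorem~\ref{thm:approx} and Theorem~\ref{thm:stability} enter. The plan has four steps.
\begin{enumerate}
\item Approximate $\P$, which is concentrated on $\Gamma$, by finitely supported measures $\P^k=\frac1k\sum_{i\le k}\delta_{(x_i,p_i)}$ with $(x_i,p_i)\in\Gamma$, for instance by sampling $\P$ and using the law of large numbers in $W_r$. Set $\mu^k:=\frac1k\sum_i\delta_{x_i}$ and $\nu^k:=\frac1k\sum_i p_i$. Then $\mu^k\cvd\nu^k$ for a sample where $\int y\,p_i\le x_i$ holds, and $(\mu^k,\nu^k)\to(\mu,\nu)$ in $W_r$.
\item For the discrete problem with marginals $(\mu^k,\nu^k)$, finite monotonicity gives optimality of $\pi^k:=\hat I(\P^k)$. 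Any competitor in $\Pi_S(\mu^k,\nu^k)$ has kernels $q_i$ at the atoms $x_i$, and these satisfy the constraints of Definition~\ref{def:supMart_mono}: on $\mathcal I_1$ the barycentres must be preserved, since the martingale region of $\mu^k,\nu^k$ is inherited from that of $\pi$, and on $\mathcal I_0$ they satisfy $\int y\,q_i\le x_i$.
\item Pass to the limit. By Theorem~\ref{thm:stability}, with $C$ continuous (condition (A'')), $V_S^C(\mu^k,\nu^k)\to V_S^C(\mu,\nu)$.
\item Conclude. Since $\P^k\to\P$ in $W_r$ on $\R\times\Pc_r$ and $C$ is continuous with $r$-growth, $\int C\,d\P^k\to\int C\,d\P$. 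Therefore $\int C(x,\pi_x)\mu(dx)=V_S^C(\mu,\nu)$, so $\pi$ is optimal.
\end{enumerate}

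\emph{Main obstacle.} In step 2, monotonicity only allows the barycentres of the $q_i$ to drop relative to those of the $p_i$ on $\mathcal I_0$. A general competitor, however, need only satisfy $\int y\,q_i\le x_i$, and this could exceed $\int y\,p_i$ where $\pi$ is strictly supermartingale. I would handle this through the choice of $M_0$ and $M_1$ in Definition~\ref{def:supMart_mono}. On the region to the left of $x^*$ where $\pi$ is a martingale, $M_0$ is taken so that $\int y\,p_i=x_i$, and then the required inequality is exactly the admissibility constraint. On the region where $\pi$ is strictly supermartingale, I would use the structure from Section~\ref{sec:irreducible} to argue that any competitor among the discretised couplings can be reduced, up to an error vanishing as $k\to\infty$, to one whose barycentres are not raised there. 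If this reduction fails, the fallback is to prove the inequality directly in the limit, comparing $\pi$ with an arbitrary $\pi'\in\Pi_S(\mu,\nu)$ through discretisations of $\pi'$ obtained from Theorem~\ref{thm:approx}.
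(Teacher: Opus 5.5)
Your necessity half follows the paper's route: lift to $J(\pi)$, run a Beiglb\"ock--Griessler-type selection and swap, and use Jensen to return. The sufficiency half, however, has a genuine gap, located exactly in your ``main obstacle''. With your choice of $M_0$ the implication is false, so no reduction or fallback can close it. You put every point outside the marginal-forced martingale region into $M_0$, and in your last paragraph you also forbid raising barycentres where $\pi$ is strictly supermartingale. But $\sum_i q_i=\sum_i p_i$ gives $\sum_i\int y\,q_i(dy)=\sum_i\int y\,p_i(dy)$, so the constraints $\int y\,q_i\le\int y\,p_i$ for all $i$ force equality for every $i$. Your condition is therefore only monotonicity for the problem with the barycentre map $x\mapsto m_x$ prescribed.

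Concretely, take $r=2$, $\mu=\tfrac12(\delta_0+\delta_2)$, $\nu=\tfrac12(\delta_{-1}+\delta_1)$ and $C(x,p)=-x\int y\,p(dy)$. One checks $P_\mu\le P_\nu$ and $x^*=-1$, so all $\mu$-mass lies in $I_0$. This $C$ is continuous, linear in $p$, and has quadratic growth. Set $\rho:=\tfrac12(\delta_{-1}+\delta_1)$. The product coupling $\mu\otimes\nu$ has kernel $\rho$ (barycentre $0$) at both atoms and cost $0$, whereas $\tfrac12(\delta_{(0,-1)}+\delta_{(2,1)})\in\Pi_S(\mu,\nu)$ has cost $-1$. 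Under your definition $\mu\otimes\nu$ is monotone with $\Gamma=\{(0,\rho),(2,\rho)\}$, since every allowed competitor preserves each barycentre and hence the linear cost. Yet it is not optimal. The improving competitor raises the barycentre at the strict point $x=2$ from $0$ to $1$. Since the marginals are finitely supported, the discretisation is exact, so there is no vanishing error to exploit.

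The missing idea is to leave strictly supermartingale points \emph{unconstrained} in the definition. $M_1$ is the marginal-forced martingale region, which lies to the \emph{left} of $x^*$; $I_0=(x^*,\infty)$ is the supermartingale component, so your ``to the right'' is reversed. $M_0$ is the set where $\pi$ itself is a martingale. Points with $\int y\,p_i<x_i$ lie in neither set, so no barycentre condition is imposed on $q_i$ there. This is how the paper sets it up, and with it your Step~2 becomes immediate (Lemma~\ref{lemma:fini_opt}): any $\pi'\in\Pi_S(\mu^k,\nu^k)$ satisfies $\int y\,q_i\le x_i=\int y\,p_i$ on $M_0$, so it is an admissible competitor.

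The price is paid in necessity. Competitors may now raise barycentres at strict points, even above $x_i$, and your swap then leaves $\Pi_S$. Convexity fixes this. For small $t>0$, $(1-t)p_i+tq_i$ is a genuine supermartingale competitor: on $M_0\cup M_1$ the constraints are linear in $q$, and at strict points there is slack. Then $\sum_iC(x_i,p_i)\le\sum_iC(x_i,(1-t)p_i+tq_i)\le(1-t)\sum_iC(x_i,p_i)+t\sum_iC(x_i,q_i)$, which yields the inequality for $q$.

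Finally, your Step~3 invokes (A''), i.e.\ joint continuity. If $C$ is only continuous in the second argument, as in Proposition~\ref{prop:sufficiency}, (A'') is unavailable, and (B'') fails too because empirical $\mu^k$ never converge strongly. You then also need the Lusin-type reduction of the paper's Step~2.
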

\begin{proof}
Combine Proposition \ref{prop:sufficiency} and Proposition \ref{prop:necessity}.
\end{proof}

Using the above main theorem, we can obtain the monotonicity principle of classical SOT. Recall that the definition of finite optimality was given in \ref{subsec:def_mono}.

\begin{corollary}(Monotonicity principle for SOT). Let \( r \geq 1 \), \( \mu\cvd \nu \in \mathcal{P}_r \), $c: \R \times \R \to \R$ be measurable and such that $y \mapsto c(x,y)$ is continuous for all \( x \in \mathbb{R} \) and \( \sup_{(x,y) \in \mathbb{R}^2} \frac{|c(x,y)|}{|1+|x|^r + |y|^r} < \infty \). Then \( \pi \in \Pi_S(\mu, \nu) \) is optimal for (SOT) if and only if it is concentrated on a finitely optimal set.
\end{corollary}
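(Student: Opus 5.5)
The plan is to deduce the corollary from Theorem~\ref{thm:monotonicity}, applied to the linear weak cost
\[
C(x,p):=\int_\R c(x,y)\,p(dy),\qquad (x,p)\in\R\times\Pc_r .
\]
Then $\int C(x,\pi_x)\,\mu(dx)=\int c\,d\pi$, so optimality for (SOT) coincides with optimality for (WSOT) with this $C$.

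\textbf{Checking the hypotheses of Theorem~\ref{thm:monotonicity}.}
\begin{itemize}
\item Convexity in $p$ holds trivially, since $C$ is affine in $p$.
\item The growth bound $|C(x,p)|\le K(1+|x|^r+\int|y|^r\,dp)$ follows directly from the assumed bound on $c$.
\item Measurability of $C$ follows from measurability of $c$ by a monotone class argument.
\item Continuity is the issue. The theorem requires $C$ continuous, but the corollary only gives continuity of $y\mapsto c(x,y)$. For fixed $x$, continuity in $y$ plus $r$-growth makes $p\mapsto C(x,p)$ continuous for $W_r$, but joint continuity can fail.
\end{itemize}
To handle this I would reread the proofs of Propositions~\ref{prop:sufficiency} and \ref{prop:necessity}. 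I expect joint continuity enters only through the stability Theorem~\ref{thm:stability}, which explicitly allows case (B''), namely strong convergence of the first marginals. The idea is to run the argument with perturbations that keep the first marginal fixed, or converge strongly: perturbations $\mu+\epsilon(\cdot)$ with an absolutely continuous correction converge strongly. Then continuity in the second argument alone suffices. So step one is to verify that the proof of Theorem~\ref{thm:monotonicity} goes through under ``measurable in $x$, continuous in $p$'' whenever only (B'')-type stability is invoked. I expect this to be the main obstacle.

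\textbf{Translating monotonicity.} Step two is to show that, for linear $C$, supermartingale $C$-monotonicity of $\pi$ is equivalent to $\pi$ being concentrated on a finitely optimal set in the sense of Section~\ref{subsec:def_mono}.
\begin{itemize}
\item \emph{Finitely optimal $\Rightarrow$ $C$-monotone.} Given a finitely optimal $\Gamma_0\subset\R^2$, define
\[
\Gamma:=\{(x,p): \supp(p)\subset(\Gamma_0)_x\}.
\]
Take $(x_i,p_i)\in\Gamma$ and competitors $q_i$ with $\sum_i p_i=\sum_i q_i$ and the barycentre constraints. Approximating the $p_i$ by finitely supported measures (using continuity of $c$ in $y$ and the growth bound for passing to the limit), the comparison $\sum_i C(x_i,p_i)\le\sum_i C(x_i,q_i)$ reduces to the finite competitor condition defining finite optimality. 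The barycentre conditions on $M_0$ and $M_1$ match the supermartingale constraints of that definition.
\item \emph{$C$-monotone $\Rightarrow$ finitely optimal.} Given a $C$-monotone $\Gamma$ carrying $(x,\pi_x)$, take
\[
\Gamma_0:=\{(x,y): (x,p)\in\Gamma,\ y\in\supp(p)\}.
\]
A finite competitor $\alpha'$ of a finite measure $\alpha$ on $\Gamma_0$ lifts to competitors $q_i$ obtained by moving small mass $\epsilon\alpha$ inside the $p_i$. Linearity of $C$ turns the weak inequality into $\int c\,d\alpha\le\int c\,d\alpha'$.
\end{itemize}

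\textbf{Conclusion.} Combining the two equivalences with Theorem~\ref{thm:monotonicity} gives the corollary.
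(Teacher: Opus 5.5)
Your overall route (pass to the linear cost $C(x,p)=\int c(x,y)\,p(dy)$, apply Theorem~\ref{thm:monotonicity}, then translate between the two notions of monotonicity) is the one the paper intends when it defers to \cite[Corollary~3.4]{BJMP}. However, you put the difficulty in the wrong place. Continuity is not an obstacle. Propositions~\ref{prop:sufficiency} and~\ref{prop:necessity}, which together give Theorem~\ref{thm:monotonicity}, assume only continuity in the second argument; the $x$-dependence is handled by the Lusin-type Step~2 of Proposition~\ref{prop:sufficiency}. $W_r$-continuity of $p\mapsto\int c(x,y)\,p(dy)$ follows from continuity of $c(x,\cdot)$ and the growth bound. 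Your proposed workaround via strong convergence would fail anyway. The sufficiency argument passes through finitely supported first marginals (Lemma~\ref{lemma:fini_opt}), and these never converge strongly to a $\mu$ with a diffuse part.

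The genuine gap is the translation, which is the entire content of the corollary: neither direction is established, and one fails as written. Take the direction $C$-monotone $\Rightarrow$ finitely optimal. The lift $q_i=p_i-\epsilon\alpha_i+\epsilon\alpha_i'$ is a positive measure only if $p_i\ge\epsilon\alpha_i$, i.e.\ only if $p_i$ has atoms at the atoms of $\alpha$. With $\Gamma_0$ built from closed supports and $\pi_x$ diffuse, a finitely supported $\alpha$ on $\Gamma_0$ charges points of zero $p_i$-mass. Replacing atoms by small pieces of $p_i$ near $y_j$ breaks the exact barycentre equalities required on $\mathcal I_1$. Pieces cut from different kernels near the same $y_j$ have different barycentres, and near an endpoint of $\supp(p_i)$ no piece is centred at $y_j$. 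This direction is better obtained without the weak principle. Apply Theorem~\ref{thm:monoAbstract} directly on $E=\R^2$, with the marginal constraints in $\mathcal F_1$ and $\{\varphi(x)(y-x):\varphi\in C_b^+(\R)\}$ in $\mathcal F_2$ (or use \cite{NutzStebegg.18}). This gives a $\pi$-full set that is finitely optimal against all barycentre-decreasing competitors, hence against all $(M_0,M_1)$-competitors.

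Now take the direction finitely optimal $\Rightarrow$ $C$-monotone. Given $(x_i,p_i)$ with $p_i((\Gamma_0)_{x_i})=1$ and a weak competitor $(q_i)$, you need, for each $n$, finitely supported $p_i^n$ with atoms in $(\Gamma_0)_{x_i}$ and an \emph{exact} competitor family $(q_i^n)$. That means $q_i^n(\R)=p_i^n(\R)$, $\sum_iq_i^n=\sum_ip_i^n$, equal first moments on $\mathcal I_1$ and $\le$ on $\mathcal I_0$, with $(p_i^n,q_i^n)\to(p_i,q_i)$ in $W_r$. Discretising $p_i$ and $q_i$ separately meets these constraints only up to $o(1)$, and finite optimality says nothing about approximate competitors. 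Your phrase ``reduces to the finite competitor condition'' is therefore exactly the missing argument.

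A workable start is to sample common points $Y_j$ from $\rho:=\sum_ip_i$ and weight them by $f_i=dp_i/d\rho$ (set to $0$ off $(\Gamma_0)_{x_i}$, a $\rho$-null change) and by $g_i=dq_i/d\rho$. This gives $\sum_ip_i^n=\sum_iq_i^n$ and the correct supports. The finitely many mass and first-moment constraints must still be restored exactly by a small perturbation of the weights $g_i(Y_j)$. That step needs a nondegeneracy argument, with degenerate configurations treated separately, or else a Lagrange-multiplier argument with finitely many multipliers.

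Finally, define $\Gamma:=\{(x,p):p((\Gamma_0)_x)=1\}$ rather than via $\supp(p)\subset(\Gamma_0)_x$. The sections of $\Gamma_0$ need not be closed, so your $\Gamma$ may fail to carry $(x,\pi_x)$.
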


\begin{proof}
The argument is similar as \cite[Corollary 3.4]{BJMP}, and we omit it here.
\end{proof}

\section{Proof of Theorem \ref{thm:approx}}\label{sec:proof-approx}

This section is devoted to the proof of Theorem  \ref{thm:approx}.  In view of \cite[Lemma 5.1]{BJMP1}, it suffices to prove Theorem \ref{thm:approx} in the case of $r=1$.
Throughout Section \ref{sec:proof-approx},   we fix a sequence $(\mu^k,\nu^k)_{k\ge1}\subset\Pc_1\times\Pc_1$ satisfying
\begin{equation}\label{eq:standing-marginals}
W_1(\mu^k,\mu)+W_1(\nu^k,\nu)\longrightarrow 0\qquad \mbox{and} \qquad \mu^k\cvd\nu^k\ \text{ for every }k\ge1,
\end{equation}
and we pick a  supermartingale coupling $\pi=\mu(dx)\pi_x(dy)\in\Pi_S(\mu,\nu)$.
We aim to construct a sequence $(\pi^k)_{k\ge1}\subset\Pc_1(\mathbb R^2)$ such that  
\[
\mathcal{AW}_1(\pi^k,\pi)\longrightarrow 0 
 \qquad \mbox{and} \qquad \pi^k\in\Pi_S(\mu^k,\nu^k)\ \text{ for every } k\ge1.
\]
Recall that for a finite measure $\eta\in\Mc_1$, we write
\[
\bar\eta:=\int_{\R}x\,\eta(dx),
\qquad
P_\eta(x):=\int_{\R}(x-y)^+\eta(dy),
\]
and, for two finite measures of the same mass,
\[
\Delta(\eta,\chi):=\bar\eta-\bar\chi.
\]
Recall that $\eta\cvd\chi$ is equivalent to $P_\eta\le P_\chi$ on $\R$, and that
$\eta\cv\chi$ is equivalent to $\eta\cvd\chi$ together with
$\bar\eta=\bar\chi$.

The proof has three layers.  First, we reduce the problem to the only component where a
new argument is needed: the strict, genuinely supermartingale irreducible component.
Second, on this component we construct, for every small $\varepsilon>0$, a coupling
\[
\bar\pi^{k,\varepsilon}
\in
\Pi_S\big(\mu^k,\lambda^{k,\varepsilon}\big),
\qquad
\lambda^{k,\varepsilon}:=
\varepsilon\nu^k+(1-\varepsilon)\nu^{R,\alpha,k},
\]
where 
\[
\nu^{R,\alpha,k}\cvd\nu^k,\qquad \mathcal{AW}_1( \bar\pi^{k,\varepsilon} , \pi) \longrightarrow 0  \mbox{ if first } k\to\infty \mbox{ and then } \varepsilon\downarrow 0.
\]
Third, we use a final
supermartingale step from $\lambda^{k,\varepsilon}$ to $\nu^k$ and choose
$\varepsilon$ diagonally.

For reference, the main objects in the strict component are the following.  All of them
are introduced precisely below.
\begin{center}
\begin{tabular}{ll}
\toprule
symbol & meaning \\
\midrule
$I=(\ell,+\infty)$ & irreducible supermartingale interval; $\mu(I)=1$ \\
$\Delta=\bar\mu-\bar\nu>0$ & total supermartingale defect \\
$K\subset(a,b)\Subset I$ & compact set carrying almost all relevant mass \\
$\pi^R$ & kernelwise compactification of $\pi$ \\
$\pi^{R,\alpha}$ & affine contraction of $\pi^R$ toward the diagonal \\
$\nu^{R,\alpha}$ & second marginal of $\pi^{R,\alpha}$ \\
$\nu^{R,\alpha,k}$ & intermediate target, with $\mu^k\cvd\nu^{R,\alpha,k}\cvd\nu^k$ \\
$L\Subset I$ & compact set containing the $y$-support of $\pi^{R,\alpha}|_{K\times\R}$ \\
$\hat\pi^k$ & localisation of an ordinary coupling from $\mu^k$ to $\nu^{R,\alpha,k}$ \\
$\nu_-^k$ & free mass of $\nu^{R,\alpha,k}$ lying strictly to the left of $K$ \\
$\tilde\pi^k$ & barycentre-corrected version of $\hat\pi^k$ \\
$\mu^k_{\rm rem},\nu^k_{\rm rem}$ & residual marginals after inserting $(1-2\varepsilon)\tilde\pi^k$ \\
\bottomrule
\end{tabular}
\end{center}

\subsection{Reduction to irreducible components}\label{subsec:reduction-irreducible}

Let us recall the irreducible decomposition of supermartingale couplings from
Lemma~\ref{lem:irreducible}.  Define 
\[
D:=P_\nu-P_\mu 
\qquad \mbox{and} \qquad 
x^*:=\sup\{x\in\R:\ D(x)=0\}.
\]
It follows immediately that $D\ge 0$ and 
\[
D(x)\sim 0 \mbox{ as } x\to -\infty \qquad \mbox{and} \qquad D(x)\sim \overline\mu - \overline\nu  \mbox{ as } x\to +\infty.  
\]
Set
\[
I_0:=(x^*,+\infty),
\]
and let $(I_n)_{n\ge1}$,  $I_n:=(l_n,r_n)$,   be the open connected components of
$\{D>0\}\cap(-\infty,x^*)$.  Define
\[
I_{-1}:=\R\setminus\bigcup_{n\ge0}I_n.
\]
Then
\begin{equation}\label{eq:limit-component-decomposition}
\mu=\sum_{n\ge-1}\mu_n,
\qquad
\nu=\sum_{n\ge-1}\nu_n,
\qquad
\pi=\sum_{n\ge-1}\pi_n,
\end{equation}
where
\[
\mu_{-1}=\nu_{-1}=:\eta,
\quad
\pi_{-1}=\eta(dx)\delta_x(dy),\quad 
\pi_0\in\Pi_S(\mu_0,\nu_0),
\quad
\pi_n\in\Pi_M(\mu_n,\nu_n) \text{ for all } n\ge1.
\]
In particular,  $\mu_{-1}=\nu_{-1}$,  $\mu_0\cvd\nu_0$ and $\mu_n\cv\nu_n$ for all $n\ge1$.

The next proposition is the supermartingale counterpart of the componentwise splitting
used in the martingale proof.  The construction follows the limiting quantile blocks of
$\mu$ and then transports these blocks with an arbitrary supermartingale coupling between
$\mu^k$ and $\nu^k$.

\begin{proposition}\label{prop:decompo1}
For every $k$,  there exist finite measures
$(\mu_n^k,\nu_n^k)_{n\ge-1}\subset \mathcal M_1\times \mathcal M_1$ such that
\begin{equation}\label{eq:component-decomp-k}
\mu^k=\sum_{n\ge-1}\mu_n^k,
\qquad
\nu^k=\sum_{n\ge-1}\nu_n^k,
\qquad
\mu_n^k\cvd\nu_n^k\quad\text{for every }n\ge-1.
\end{equation}
Furthermore, for every $n\ge-1$,
\begin{equation}\label{eq:component-conv-k}
W_1(\mu_n^k,\mu_n)+W_1(\nu_n^k,\nu_n)\longrightarrow0.
\end{equation}
\end{proposition}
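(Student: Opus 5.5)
The plan follows the hint given just before the statement: cut $\mu^k$ into quantile blocks that mirror the decomposition of $\mu$, and transport each block with a fixed supermartingale coupling of $\mu^k$ and $\nu^k$.

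\textbf{Step 1 (decomposition of $\mu$).} Since the $I_n$ and $I_{-1}$ are disjoint and partition $\R$, we may take $\mu_n=\mu|_{I_n}$ for $n\ge0$ and $\mu_{-1}=\mu|_{I_{-1}}$. The standard irreducible decomposition, Lemma~\ref{lem:irreducible}, gives $\mu_n=\mu|_{I_n}$, with $\nu_n$ the image of $\mu_n$ under every $\pi\in\Pi_S(\mu,\nu)$. This image is independent of $\pi$.

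Write $G=F_\mu$. For each $n$, consider the set of quantile levels
\[
A_n:=\{u\in(0,1):F_\mu^{-1}(u)\in I_n\}.
\]
Because $\mu$ may have atoms at the endpoints $l_n,r_n$, and atoms sitting in $I_{-1}$ are split between components, we work instead with the levels $u$ themselves. We choose a measurable partition $(A_n)_{n\ge-1}$ of $(0,1)$ such that $\mu_n$ is the push-forward of $\mathrm{Leb}|_{A_n}$ under $F_\mu^{-1}$. This is possible since $\mu_n\le\mu$ and $\sum_n\mu_n=\mu$; on atoms we split the corresponding flat interval of levels proportionally.

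\textbf{Step 2 (definition at level $k$).} Fix any $\pi^{0,k}\in\Pi_S(\mu^k,\nu^k)$, which exists by Theorem~\ref{thm:strassen}. Set
\[
\mu_n^k:=(F_{\mu^k}^{-1})_\#\mathrm{Leb}|_{A_n},
\qquad
\nu_n^k:=\int \pi^{0,k}_x\,\mu_n^k(dx).
\]
To make the second definition legitimate we need $\mu_n^k\le\mu^k$ and $\sum_n\mu_n^k=\mu^k$. The sum identity is immediate from the quantile representation. The bound $\mu_n^k\le\mu^k$ is also fine, since $\mu_n^k$ is the image of a sub-Lebesgue measure. Then $\sum_n\nu_n^k=\nu^k$. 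Moreover $(\mu_n^k(dx)\pi^{0,k}_x(dy))\in\Pi_S(\mu_n^k,\nu_n^k)$, so $\mu_n^k\cvd\nu_n^k$, which proves \eqref{eq:component-decomp-k}.

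\textbf{Step 3 (convergence of first marginals).} For the first part of \eqref{eq:component-conv-k} we use the quantile formula \eqref{eq:WassQuantile}:
\[
W_1(\mu_n^k,\mu_n)\le\int_{A_n}|F_{\mu^k}^{-1}-F_\mu^{-1}|\,du\le W_1(\mu^k,\mu)\to0.
\]

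\textbf{Step 4 (convergence of second marginals; main obstacle).} The hard part is $W_1(\nu_n^k,\nu_n)\to0$, because $\pi^{0,k}$ is arbitrary. We argue by compactness and uniqueness.

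The families $(\mu_n^k)_k$ and $(\nu_n^k)_k$ are uniformly integrable because $\nu_n^k\le\nu^k$ and $\nu^k\to\nu$ in $W_1$. The couplings $\pi^{0,k}$ converge along a subsequence to some $\pi^0\in\Pi_S(\mu,\nu)$; the supermartingale property is closed under $W_1$ convergence. The measures $\mu_n^k(dx)\pi^{0,k}_x(dy)$ equal $(F_{\mu^k}^{-1},Y)$ restricted to levels in $A_n$, where we realise $\pi^{0,k}$ through a uniform variable $U$ with $X=F_{\mu^k}^{-1}(U)$. Their limit points are of the form $\mu_n(dx)q_x(dy)$ with $\sum_n q$ recovering some supermartingale coupling of $(\mu,\nu)$. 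This requires a joint coupling argument: choose $(U,Y^k)$ with $Y^k\mid U\sim\pi^{0,k}_{F^{-1}_{\mu^k}(U)}$, extract a limit $(U,Y)$, and note that $F^{-1}_{\mu^k}(U)\to F^{-1}_\mu(U)$ in $L^1$. The key ingredient is then Lemma~\ref{lem:irreducible}: any $\pi'\in\Pi_S(\mu,\nu)$ maps $\mu_n$ to the same $\nu_n$, including the splitting of atoms at component endpoints, where the diagonal part is forced because $D$ vanishes there. Hence every subsequential limit of $\nu_n^k$ is $\nu_n$, and uniform integrability upgrades this to $W_1$ convergence.

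The delicate point is the treatment of atoms of $\mu$ at the points $l_n,r_n$ and in $I_{-1}$. There I would check that on $\{D=0\}$ any supermartingale coupling is the identity, so that the mass at such atoms stays put regardless of how the level set $A_n$ splits it.
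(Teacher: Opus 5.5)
Your proposal is correct and is essentially the paper's proof. Both use the same quantile blocks $A_n=\{u:F_\mu^{-1}(u)\in I_n\}$, pushed forward by $F_{\mu^k}^{-1}$ and transported with an arbitrary coupling in $\Pi_S(\mu^k,\nu^k)$; the quantile coupling for the first marginals; and compactness, uniqueness in Lemma~\ref{lem:irreducible} and uniform integrability for the second marginals.

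Two small points where the paper is simpler. First, it identifies the limit blocks through the domination $\mu_n^k(dx)\pi^{0,k}_x(dy)\le\pi^{0,k}$: any limit is $\le\pi^0$ and has first marginal $\mu|_{I_n}$, so it equals $\pi^0|_{I_n\times\R}$. This avoids having to pass the Borel restriction $\{U\in A_n\}$ to the limit in your joint $(U,Y^k)$ argument. Second, no splitting of atoms is ever needed, since the $I_n$ partition $\R$ and $(F_\mu^{-1})_\#\mathcal{L}^1|_{A_n}=\mu|_{I_n}$ holds exactly.
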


\begin{proof}[Proof of Proposition~\ref{prop:decompo1}]
Let
\(Q:=F_\mu^{-1}
\) and \(Q_k:=F_{\mu^k}^{-1}.
\)
For each component \(I_n\), \(n\geq -1\), set
\[
        A_n:=\{u\in(0,1):Q(u)\in I_n\}.
\]
Then the sets \((A_n)_{n\geq -1}\) form a measurable partition of \((0,1)\), up to
Lebesgue-null sets, and
\[
        \mu_n=Q_\#\big(\mathcal L^1|_{A_n}\big).
\]
For every \(k\), choose an arbitrary coupling
\(
        \Gamma^k\in\Pi_S(\mu^k,\nu^k),
\)
and write its disintegration as
\[
        \Gamma^k(dx,dy)=\mu^k(dx)\Gamma_x^k(dy).
\]
For \(n\geq -1\), define \(\gamma_n^k\in \mathcal M_1(\mathbb R^2)\) by
\[
        \gamma_n^k(dx,dy)
        :=
        \int_{A_n}\delta_{Q_k(u)}(dx)\Gamma^k_{Q_k(u)}(dy)\,du.
\]
Let \(\mu_n^k\) and \(\nu_n^k\) denote respectively the first and second marginals of
\(\gamma_n^k\). Then
\[
        \mu_n^k=(Q_k)_\#\big(\mathcal L^1|_{A_n}\big).
\]
Since the sets \(A_n\) partition \((0,1)\), we have
\[
        \mu^k=\sum_{n\geq -1}\mu_n^k,
        \qquad
        \nu^k=\sum_{n\geq -1}\nu_n^k,
        \qquad
        \Gamma^k=\sum_{n\geq -1}\gamma_n^k.
\]
Moreover, for every non-negative Borel function \(f\),
\[
        \int f\,d\gamma_n^k
        =
        \int_{A_n}\int f(Q_k(u),y)\Gamma^k_{Q_k(u)}(dy)\,du
        \leq
        \int_0^1\int f(Q_k(u),y)\Gamma^k_{Q_k(u)}(dy)\,du
        =
        \int f\,d\Gamma^k.
\]
Hence
\(
        \gamma_n^k\leq \Gamma^k.
\)
In particular, \(\mu_n^k\leq \mu^k\) and \(\nu_n^k\leq\nu^k\). Since
\(\gamma_n^k\) has the same conditional kernel as \(\Gamma^k\) on its first marginal, and
\(\Gamma^k\) is a supermartingale coupling, we obtain
\(
        \gamma_n^k\in\Pi_S(\mu_n^k,\nu_n^k).
\)
Equivalently,
\(
        \mu_n^k\cvd\nu_n^k.
\)

We next prove the convergence of the first marginals. The quantile coupling between
\(\mu_n^k\) and \(\mu_n\) gives
\[
        W_1(\mu_n^k,\mu_n)
        \leq
        \int_{A_n}|Q_k(u)-Q(u)|\,du
        \leq
        \int_0^1|Q_k(u)-Q(u)|\,du
        =
        W_1(\mu^k,\mu).
\]
Therefore
\(
        W_1(\mu_n^k,\mu_n)\longrightarrow 0.
\)

It remains to identify the limit of the second marginals. Fix \(n\geq -1\). We claim that
\[
        W_1(\nu_n^k,\nu_n)\longrightarrow 0.
\]
It is enough to prove that every subsequential limit is \(\nu_n\). Since
\(
        \gamma_n^k\leq \Gamma^k,
\)
and since the marginals \((\mu^k,\nu^k)\) converge in \(W_1\), the families
\((\Gamma^k)_k\) and \((\gamma_n^k)_k\) are tight and uniformly integrable. Thus, after
passing to a subsequence, we may assume that
\[
        \Gamma^k\longrightarrow \Gamma,
        \qquad
        \gamma_n^k\longrightarrow \widetilde\Gamma_n
\]
weakly, and even in \(W_1\). The limit \(\Gamma\) belongs to \(\Pi_S(\mu,\nu)\), because the
supermartingale property is closed under \(W_1\)-convergence.

Since \(\gamma_n^k\leq\Gamma^k\), passing to the limit against non-negative bounded
continuous test functions gives
\(
        \widetilde\Gamma_n\leq \Gamma.
\)
The first marginal of \(\widetilde\Gamma_n\) is the \(W_1\)-limit of \(\mu_n^k\), hence is
\(\mu_n=\mu|_{I_n}\). We now show that
\[
        \widetilde\Gamma_n=\Gamma|_{I_n\times\mathbb R}.
\]
Indeed, set
\[
        \Delta_n:=\Gamma-\widetilde\Gamma_n.
\]
Then \(\Delta_n\) is a non-negative finite measure. For every Borel set
\(B\subset I_n\),
\[
        \Delta_n(B\times\mathbb R)
        =
        \Gamma(B\times\mathbb R)-\widetilde\Gamma_n(B\times\mathbb R)
        =
        \mu(B)-\mu_n(B)
        =
        0.
\]
Thus \(\Delta_n\) has no mass over \(I_n\). On the other hand,
\[
        \widetilde\Gamma_n(I_n^c\times\mathbb R)
        =
        \mu_n(I_n^c)
        =
        0.
\]
Therefore \(\widetilde\Gamma_n\) is exactly the restriction of \(\Gamma\) to
\(I_n\times\mathbb R\):
\[
        \widetilde\Gamma_n=\Gamma|_{I_n\times\mathbb R}.
\]
By the irreducible decomposition of supermartingale couplings,   the second marginal of
\(\Gamma|_{I_n\times\mathbb R}\) is precisely \(\nu_n\). Hence the second marginal of
\(\widetilde\Gamma_n\) is \(\nu_n\). Consequently every weak subsequential limit of
\((\nu_n^k)_k\) is \(\nu_n\).

Finally, since \(\nu_n^k\leq\nu^k\) and \(\nu^k\to\nu\) in \(W_1\), the family
\((\nu_n^k)_k\) is uniformly integrable:
\[
        \lim_{M\to\infty}\sup_k
        \int_{\{|y|>M\}}|y|\,\nu_n^k(dy)
        =
        0.
\]
Thus weak convergence of \(\nu_n^k\) to \(\nu_n\) upgrades to \(W_1\)-convergence. Therefore
\[
        W_1(\nu_n^k,\nu_n)\longrightarrow 0.
\]
Combining this with the convergence of \(\mu_n^k\) proves the proposition.
\end{proof}

We shall also use the following two elementary but useful facts.

\begin{lemma}\label{lem:diagonal-component}
Let $\eta^k,v^k,\eta\in\Mc_1$ have the same total mass and satisfy
\[
W_1(\eta^k,\eta)+W_1(v^k,\eta)\longrightarrow0.
\]
Let $\chi:=\eta(dx)\delta_x(dy)$.  If $\eta^k\cvd v^k$ and
$\chi^k\in\Pi_S(\eta^k,v^k)$, then
\[
\mathcal{AW}_1(\chi^k,\chi)\longrightarrow0.
\]
In particular,
\[
\int_{\R^2}|x-y|\,\chi^k(dx,dy)\longrightarrow0.
\]
\end{lemma}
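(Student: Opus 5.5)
The plan is to first prove the ``in particular'' statement $\int|x-y|\,\chi^k(dx,dy)\to0$, and then deduce the $\mathcal{AW}_1$ convergence from it. The key observation is that a supermartingale coupling between two marginals that are both close to $\eta$ must be close to the identity coupling. Write $m$ for the common mass.

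\textbf{Step 1 (the defect vanishes).} Since $\chi^k\in\Pi_S(\eta^k,v^k)$, the quantity $\int (x-y)\,\chi^k(dx,dy)=\overline{\eta^k}-\overline{v^k}$ is nonnegative. It tends to $\bar\eta-\bar\eta=0$, because $W_1$-convergence implies convergence of first moments (use the duality \eqref{eq:Wass} with $f(x)=x$). The supermartingale constraint only controls the signed mean, so we must still bound $\int|x-y|\,d\chi^k$.

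\textbf{Step 2 (uniform control of the absolute value).} The tool is a convex test function. Let $g:\R\to\R$ be strictly convex with linear growth, for instance $g(y)=\sqrt{1+y^2}$. Jensen's inequality applied to the kernel does not directly compare $g(x)$ and $\int g\,d\chi^k_x$, because $\ba(\chi^k_x)\le x$ rather than $=x$. To handle this, I would split $g=g_1+g_2$ with $g_1$ convex nonincreasing, and I would in fact use instead the convex nonincreasing family $f_t(y)=(t-y)^+$ together with the linear function. Concretely,
\[
\int|x-y|\,\chi^k(dx,dy)\le \int (x-y)\,d\chi^k + 2\int (y-x)^+\,d\chi^k ,
\]
and $\int(y-x)^+\,d\chi^k$ is the upward movement. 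For fixed $x$ the map $y\mapsto(y-x)^+$ is convex, and I would bound the upward part through $\int\big(\int g\,d\chi^k_x-g(x)\big)\eta^k(dx)$ plus a barycentre correction. Since $\ba(\chi^k_x)\le x$ and $g$ is $1$-Lipschitz, Jensen gives
\[
\int g\,d\chi^k_x\ge g(\ba(\chi^k_x))\ge g(x)-(x-\ba(\chi^k_x)).
\]
Hence the nonnegative quantity
\[
h_k(x):=\int g\,d\chi^k_x-g(x)+\big(x-\ba(\chi^k_x)\big)
\]
has integral $\int g\,dv^k-\int g\,d\eta^k+(\overline{\eta^k}-\overline{v^k})$, which tends to $0$ by Step 1 and $W_1$-convergence.

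\textbf{Step 3 (main obstacle).} We must convert $\int h_k\,d\eta^k\to0$ into $\int|x-y|\,d\chi^k\to0$. I would argue by compactness: $(\chi^k)$ is tight and uniformly integrable, since both marginal families converge in $W_1$. Any $W_1$-subsequential limit $\chi'$ lies in $\Pi_S(\eta,\eta)$, because the supermartingale property is closed under $W_1$-convergence, as already used in Proposition~\ref{prop:decompo1}. But $\Pi_S(\eta,\eta)$ equals $\{\chi\}$: the defect is $0$, so $\chi'$ is a martingale coupling of $\eta$ with itself, and strict convexity of $g$ forces $\chi'_x=\delta_x$. Since $|x-y|$ has linear growth and we have uniform integrability, $\int|x-y|\,d\chi^k\to\int|x-y|\,d\chi'=0$ along every subsequence. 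This argument even makes Step 2 unnecessary, so I would present the compactness route as the main argument.

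\textbf{Step 4 ($\mathcal{AW}_1$).} Let $\gamma^k$ be an optimal coupling of $(\eta^k,\eta)$, and use the coupling $\gamma^k$ in \eqref{eq:AW_def}. Since $\chi_{x'}=\delta_{x'}$, we have $W_1(\chi^k_x,\delta_{x'})=\int|y-x'|\,\chi^k_x(dy)\le\int|y-x|\,\chi^k_x(dy)+|x-x'|$. Therefore
\[
\mathcal{AW}_1(\chi^k,\chi)\le 2W_1(\eta^k,\eta)+\int|x-y|\,\chi^k(dx,dy)\to0 .
\]
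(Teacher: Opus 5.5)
Your proof is correct, and it differs from the paper's only in the passage to $\mathcal{AW}_1$.

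The first half matches the paper's argument. The $W_1$-convergent marginals give tightness and uniform integrability, every subsequential limit lies in $\Pi_S(\eta,\eta)$ and therefore equals $\chi$, and hence $\int|x-y|\,d\chi^k\to0$. You also justify why $\Pi_S(\eta,\eta)=\{\chi\}$, which the paper simply asserts. Zero total defect forces the barycentre constraint to hold with equality a.e., and the equality case of Jensen for the strictly convex $g$ then forces $\chi'_x=\delta_x$.

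For the $\mathcal{AW}_1$ step, the paper invokes ordinary stability of couplings \cite[Proposition~2.3]{BJMP1} to obtain some $\tilde\chi^k\in\Pi(\eta^k,v^k)$ with $\mathcal{AW}_1(\tilde\chi^k,\chi)\to0$. It then bounds $\mathcal{AW}_1(\chi^k,\tilde\chi^k)\le\int|x-y|\,d(\chi^k+\tilde\chi^k)$ using the common first marginal $\eta^k$, and concludes by the triangle inequality. You instead plug an optimal coupling of $(\eta^k,\eta)$ directly into the definition of $\mathcal{AW}_1$ and exploit that the limit kernel is a Dirac mass. This gives the explicit bound $\mathcal{AW}_1(\chi^k,\chi)\le 2W_1(\eta^k,\eta)+\int|x-y|\,d\chi^k$.

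Your route is self-contained, quantitative, and avoids the external stability theorem. The paper's route is a generic device that does not use the special structure of the limit, but here that generality buys nothing.

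Your Steps 1 and 2 play no role in the final argument and can be deleted. The displayed inequality in Step 2 is in fact an identity, since $|x-y|=(x-y)+2(y-x)^+$.
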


\begin{proof}
The marginals of $(\chi^k)_k$ converge to $\eta$, hence the sequence is tight and has
uniformly integrable first moments.  Let $\chi'$ be any weak limit.  Then
$\chi'\in\Pi_S(\eta,\eta)$.  Therefore $\chi'=\chi$.  The convergence is in
$W_1$ by uniform integrability.

To obtain adapted convergence, let $\tilde\chi^k\in\Pi(\eta^k,v^k)$ be any sequence with
$\mathcal{AW}_1(\tilde\chi^k,\chi)\to0$, whose existence follows from the ordinary
stability of couplings.  Since both $\chi^k$ and $\tilde\chi^k$ converge to the diagonal
in $W_1$,
\[
\mathcal{AW}_1(\chi^k,\tilde\chi^k)
\le
\int |x-y|\,(\chi^k+\tilde\chi^k)(dx,dy)
\longrightarrow0.
\]
The triangle inequality yields $\mathcal{AW}_1(\chi^k,\chi)\to0$.
\end{proof}

\begin{lemma}\label{lem:convex-lift}
Let \(\eta,\chi\in\Mc_1\) have the same total mass and assume
\(\eta\cvd\chi\). Set
\(
        \delta:=\bar\eta-\bar\chi\ge0 .
\)
Define \(\chi^\sharp\in\Mc_1\) by
\begin{equation}\label{eq:convex-lift-def}
        P_{\chi^\sharp}:=P_\eta\vee(P_\chi-\delta).
\end{equation}
Then
\[
        \eta\cv\chi^\sharp\cvd\chi .
\]
Moreover, if \(\eta^k\cvd\chi^k\), \(\eta^k\to\eta\),
\(\chi^k\to\chi\) in \(W_1\), and \(\bar\eta=\bar\chi\), then the corresponding
measures \((\chi^k)^\sharp\) converge to \(\chi\) in \(W_1\).
\end{lemma}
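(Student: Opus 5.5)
We check three things: that $P_{\chi^\sharp}$ really is the put function of a measure in $\Mc_1$ with the mass of $\eta$ and mean $\bar\eta$; the two order relations; and the stability claim. Write $m$ for the common mass.

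\emph{$\chi^\sharp$ is well defined.} A function $P$ is the put function of some $\zeta\in\Mc_1$ of mass $m$ and first moment $\bar\zeta$ iff:
\begin{itemize}
\item[(i)] $P$ is convex and nonnegative;
\item[(ii)] $P(x)\to0$ as $x\to-\infty$;
\item[(iii)] $P(x)-(mx-\bar\zeta)\to0$ as $x\to+\infty$.
\end{itemize}
We verify these for $P:=P_\eta\vee(P_\chi-\delta)$.
\begin{itemize}
\item Convexity holds because $P$ is a maximum of two convex functions, and $P\ge P_\eta\ge0$.
\item As $x\to-\infty$, $P_\eta\to0$ and $P_\chi-\delta\to-\delta\le0$, so $P\to0$.
\item As $x\to+\infty$, $P_\eta(x)-(mx-\bar\eta)\to0$, while
\[
P_\chi(x)-\delta-(mx-\bar\eta)=P_\chi(x)-(mx-\bar\chi)\to0 .
\]
Both branches are therefore asymptotic to $mx-\bar\eta$, so $P-(mx-\bar\eta)\to0$.
\end{itemize}
Hence $P=P_{\chi^\sharp}$ for a unique $\chi^\sharp$ of mass $m$ with $\overline{\chi^\sharp}=\bar\eta$; it has finite first moment because the asymptote exists. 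Its distribution function is the right derivative of $P$.

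\emph{Order relations.} Since $P_{\chi^\sharp}\ge P_\eta$ and the means agree, the characterization recalled in Section~\ref{sec:prel} gives $\eta\cv\chi^\sharp$. For the second relation, $P_{\chi^\sharp}\le P_\chi$ because $P_\eta\le P_\chi$ (from $\eta\cvd\chi$) and $P_\chi-\delta\le P_\chi$. This gives $\chi^\sharp\cvd\chi$.

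\emph{Stability.} Put $\delta^k:=\bar\eta^k-\bar\chi^k$. The $W_1$-convergence gives $\delta^k\to\bar\eta-\bar\chi=0$. By Lemma~\ref{lem:put} (which extends to equal-mass finite measures by scaling), $P_{\eta^k}\to P_\eta$ and $P_{\chi^k}\to P_\chi$ uniformly on $\R$. Since $a\vee b$ is $1$-Lipschitz in each argument,
\[
\sup_{x\in\R}\big|P_{(\chi^k)^\sharp}-P_\eta\vee P_\chi\big|\le \sup_{x\in\R}|P_{\eta^k}-P_\eta|+\sup_{x\in\R}|P_{\chi^k}-P_\chi|+\delta^k\to0 .
\]
Because $P_\eta\le P_\chi$, we have $P_\eta\vee P_\chi=P_\chi$, so $P_{(\chi^k)^\sharp}\to P_\chi$ uniformly. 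Also $\overline{(\chi^k)^\sharp}=\bar\eta^k\to\bar\eta=\bar\chi$. The reverse implication of Lemma~\ref{lem:put} (put functions pointwise plus barycentres) then yields $W_1((\chi^k)^\sharp,\chi)\to0$.

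The only point needing care is the claim that the max of two put functions with the same asymptotes at $\pm\infty$ is again a put function. This follows from the characterization (i)–(iii), which is standard in the potential-function literature cited above.
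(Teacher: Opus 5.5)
Your proposal is correct and follows the paper's proof essentially step for step. You identify $P_\eta\vee(P_\chi-\delta)$ as a put function with the mass and mean of $\eta$, read off $\eta\cv\chi^\sharp\cvd\chi$ from $P_\eta\le P_{\chi^\sharp}\le P_\chi$, and obtain stability from uniform convergence of put functions plus convergence of barycentres via Lemma~\ref{lem:put}. The only cosmetic differences are two. You control $P_{(\chi^k)^\sharp}$ through the $1$-Lipschitz property of $\vee$, whereas the paper uses the sandwich $P_{\chi^k}-\delta_k\le P_{(\chi^k)^\sharp}\le P_{\chi^k}$, which spares it from needing convergence of $P_{\eta^k}$. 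You also spell out the put-function characterization (i)--(iii), which the paper only asserts.
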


\begin{proof}
Since \(\eta\cvd\chi\), we have \(P_\eta\le P_\chi\).  The function
\(
        P_\eta\vee(P_\chi-\delta)
\)
is convex, nondecreasing, vanishes at \(-\infty\), and has the same right asymptote as
\(P_\eta\). 
Hence \(P_\eta\vee(P_\chi-\delta)\) is the put function of a measure
\(\chi^\sharp\) with the same mass and the same first moment as \(\eta\).

Since
\(
        P_\eta\le P_{\chi^\sharp}
\)
and \(\bar\eta=\overline{\chi^\sharp}\), we obtain
\(
        \eta\cv\chi^\sharp .
\) 
On the other hand, using \(P_\eta\le P_\chi\) and \(P_\chi-\delta\le P_\chi\), we get
\[
        P_{\chi^\sharp}
        =
        P_\eta\vee(P_\chi-\delta)
        \le P_\chi .
\]
Therefore
\(
        \chi^\sharp\cvd\chi .
\)

It remains to prove the convergence statement. Let
\(
        \delta_k:=\bar\eta^k-\bar\chi^k\ge0.
\)
Since \(\eta^k\to\eta\) and \(\chi^k\to\chi\) in \(W_1\), and since
\(\bar\eta=\bar\chi\), we have
\(
        \delta_k\longrightarrow0.
\)
Moreover,
\(
        P_{(\chi^k)^\sharp}
        =
        P_{\eta^k}\vee(P_{\chi^k}-\delta_k).
\)
Because \(P_{\eta^k}\le P_{\chi^k}\), it follows that
\[
        P_{\chi^k}-\delta_k
        \le
        P_{(\chi^k)^\sharp}
        \le
        P_{\chi^k}.
\]
Thus \(P_{(\chi^k)^\sharp}\to P_\chi\) uniformly on \(\mathbb R\), since
\(P_{\chi^k}\to P_\chi\) uniformly and \(\delta_k\to0\). Also
\[
        \overline{(\chi^k)^\sharp}=\bar\eta^k\longrightarrow\bar\eta=\bar\chi.
\]
By the characterization of \(W_1\)-convergence through put functions and barycentres,
\[
        W_1\big((\chi^k)^\sharp,\chi\big)\longrightarrow0.
\]
\end{proof}

We now complete the reduction.  Apply Proposition~\ref{prop:decompo1}.  On the diagonal
component $n=-1$, for any
$\chi^k\in\Pi_S(\mu_{-1}^k,\nu_{-1}^k)$, Lemma~\ref{lem:diagonal-component} gives
\[
\mathcal{AW}_1(\chi^k,\pi_{-1})\longrightarrow0.
\]
Next consider a component $n\ge 1$ with zero limiting barycentre defect,
$\bar\mu_n=\bar\nu_n$.  Then $\pi_n$ is in fact a martingale coupling.  Let
$(\nu_n^k)^\sharp$ be the convex lift of $\nu_n^k$ over $\mu_n^k$ given by
Lemma~\ref{lem:convex-lift}.  Then
\[
\mu_n^k\cv(\nu_n^k)^\sharp,
\qquad
(\nu_n^k)^\sharp\cvd\nu_n^k,
\qquad
W_1((\nu_n^k)^\sharp,\nu_n)\longrightarrow0.
\]
By the martingale approximation theorem on the line, \cite[Theorem~2.6]{BJMP1}, applied
after normalising the finite measures when necessary, there exist
\[
\hat\pi_n^k\in\Pi_M(\mu_n^k,(\nu_n^k)^\sharp)
\quad\text{such that}\quad
\mathcal{AW}_1(\hat\pi_n^k,\pi_n)\longrightarrow0.
\]
Choose $M_n^k\in\Pi_S((\nu_n^k)^\sharp,\nu_n^k)$ and compose $\hat\pi_n^k$ with
$M_n^k$ on the second coordinate.  Since both marginals of $M_n^k$ converge to $\nu_n$,
Lemma~\ref{lem:diagonal-component} gives
\[
\int_{\R^2}|z-y|\,M_n^k(dz,dy)\longrightarrow0.
\]
The composed coupling, denoted $\pi_n^k$, belongs to $\Pi_S(\mu_n^k,\nu_n^k)$ and
satisfies
\[
\mathcal{AW}_1(\pi_n^k,\pi_n)\longrightarrow0.
\]
Thus the only component not covered by the martingale theorem is $n=0$ when
\[
\mu_0(\R)>0,
\qquad
\bar\mu_0-\bar\nu_0>0.
\]
The rest of this section proves the approximation result for this strict component.  Once it
is proved, the finite partial sums converge by \cite[Lemma~3.7]{BJMP1},  where the passage from
finite partial sums to the full sum follows from \cite[Lemma~3.6(b)]{BJMP1} and the uniform
integrability of the marginals, exactly as in the martingale proof.  Hence Theorem
\ref{thm:approx} follows from the strict irreducible case.

\subsection{Auxiliary operations and estimates}\label{subsec:lemmas}

We now collect the operations used in the proof of the strict component.  For
$\eta,\chi\in\Pc_1$, define
\begin{equation}\label{eq:cd-lattice-def}
P_{\eta\vee_{cd}\chi}:=P_\eta\vee P_\chi,
\qquad
P_{\eta\wedge_{cd}\chi}:=(P_\eta\wedge P_\chi)^c,
\end{equation}
where $h^c$ denotes the largest convex minorant of $h$.  Then
\[
\ba(\eta\vee_{cd}\chi)=\ba(\eta)\wedge\ba(\chi),
\qquad
\ba(\eta\wedge_{cd}\chi)=\ba(\eta)\vee\ba(\chi).
\]

\begin{lemma}\label{lem:truncate}
Let $\pi=\mu(dx)\pi_x(dy)\in\Pi_S(\mu,\nu)$ with $\mu,\nu\in\Pc_1$.  For $R>0$, define
\[
m_x:=\int y\,\pi_x(dy),
\]
and
\begin{equation}\label{eq:truncate-kernel}
\pi_x^R:=
\begin{cases}
\displaystyle
\pi_x\wedge_{cd}
\left(\frac{R-m_x}{2R}\delta_{-R}+\frac{R+m_x}{2R}\delta_R\right),
& |m_x|\le R,\\[2mm]
\delta_{m_x},& |m_x|>R.
\end{cases}
\end{equation}
Let $\pi^R:=\mu(dx)\pi_x^R(dy)$ and denote its second marginal by $\nu^R$.  Then
\[
\pi^R\in\Pi_S(\mu,\nu^R),
\qquad
\nu^R\cv\nu,
\qquad
\mathcal{AW}_1(\pi^R,\pi)\longrightarrow0
\quad\text{as }R\to\infty.
\]
\end{lemma}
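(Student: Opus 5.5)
The plan is to check the lemma kernel by kernel and then integrate in $x$. Throughout, write $\rho_x:=\frac{R-m_x}{2R}\delta_{-R}+\frac{R+m_x}{2R}\delta_R$ for $|m_x|\le R$. This is a probability measure with barycentre $m_x$, supported on $[-R,R]$.

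\textbf{Step 1 (the kernels are admissible).} Take first $|m_x|\le R$. By \eqref{eq:cd-lattice-def}, $P_{\pi_x^R}=(P_{\pi_x}\wedge P_{\rho_x})^c$. This is convex and nondecreasing, vanishes at $-\infty$, and has slope $1$ at $+\infty$, so it is the put function of a probability measure. Its barycentre is $\ba(\pi_x)\vee\ba(\rho_x)=m_x\le x$. Since $P_{\pi_x^R}\le P_{\rho_x}$, it vanishes on $(-\infty,-R]$. On $[R,\infty)$ it is squeezed between $y-m_x$ (its asymptote, which lies below it) and $P_{\rho_x}(y)=y-m_x$. Hence $\pi_x^R$ is concentrated on $[-R,R]$.

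Moreover $P_{\pi_x^R}\le P_{\pi_x}$ and the barycentres coincide, so $\pi_x^R\cv\pi_x$. When $|m_x|>R$, Jensen's inequality gives $\delta_{m_x}\cv\pi_x$, and the barycentre is again $m_x\le x$.

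Measurability of $x\mapsto\pi_x^R$ follows because put functions and convex minorants depend measurably on $x$ (it suffices to check on rational points). Hence $\pi^R\in\Pi_S(\mu,\nu^R)$. Integrating the kernelwise relation $\pi^R_x\cv\pi_x$ against convex test functions gives $\nu^R\cv\nu$.

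\textbf{Step 2 (reduction of the adapted distance).} Use the identity coupling $\gamma=(\mathrm{id},\mathrm{id})_\#\mu$ in \eqref{eq:AW_def}. This gives
\[
\mathcal{AW}_1(\pi^R,\pi)\le\int W_1(\pi^R_x,\pi_x)\,\mu(dx).
\]
For the domination, convex order gives $\int|y|\pi_x^R(dy)\le\int|y|\pi_x(dy)$, hence $W_1(\pi^R_x,\pi_x)\le 2\int|y|\,\pi_x(dy)$. The right-hand side is $\mu$-integrable because $\nu\in\Pc_1$. By dominated convergence, it therefore suffices to show $W_1(\pi^R_x,\pi_x)\to0$ for each fixed $x$.

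Since the barycentres agree and $|m_x|\le R$ eventually, Lemma~\ref{lem:put} reduces this further to pointwise convergence $P_{\pi^R_x}(y_0)\to P_{\pi_x}(y_0)$ for every $y_0$.

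\textbf{Step 3 (pointwise convergence, the main point).} Fix $x$ and $y_0$. Since $P_{\pi^R_x}\le P_{\pi_x}$ already, we need a matching lower bound. Let
\[
\ell(y)=P_{\pi_x}(y_0)+s(y-y_0),\qquad s\in[0,1],
\]
be a supporting line of $P_{\pi_x}$ at $y_0$, and set $h:=\ell\vee0$. Then $h$ is convex and $h\le P_{\pi_x}$. The claim is that $h\le P_{\rho_x}$ once $R$ is large; then $h$ is a convex minorant of the minimum, so $P_{\pi^R_x}(y_0)\ge h(y_0)=P_{\pi_x}(y_0)$.

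We check $h\le P_{\rho_x}$ region by region.
\begin{itemize}
\item On $(-\infty,-R]$: if $s>0$, then $\ell(-R)\le0$ for large $R$, so $h=0$ there. If $s=0$, then $P_{\pi_x}(y_0)=0$ and $h\equiv0$.
\item On $[-R,R]$: $P_{\rho_x}$ is affine, so it suffices to compare at the two endpoints.
\item On $[R,\infty)$: $P_{\rho_x}(y)=y-m_x$ has slope $1\ge s$, so it suffices to compare at $y=R$. There $\ell(R)=P_{\pi_x}(y_0)+s(R-y_0)$, while $P_{\rho_x}(R)=R-m_x$.
\end{itemize}

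The comparison at $y=R$ is the delicate one when $s=1$. In that case $\ell$ coincides with the asymptote $y-m_x$ of $P_{\pi_x}$ (indeed $\ell\le P_{\pi_x}$ and both have slope $1$). Then $\ell\le P_{\rho_x}$ holds exactly on $[R,\infty)$, and the comparison on $[-R,R]$ reduces to the endpoint checks already noted. For $s<1$ the inequality at $R$ is strict for large $R$.

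This endpoint bookkeeping is the step I expect to need the most care. Once it is done, the pointwise convergence, and hence the lemma, follows.
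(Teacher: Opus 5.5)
Your proposal is correct and follows the paper's proof: kernelwise $\pi^R_x\cv\pi_x$ from equal barycentres (Jensen when $|m_x|>R$), then the identity coupling, the bound $W_1(\pi^R_x,\pi_x)\le 2\int|y|\,\pi_x(dy)$ and dominated convergence. The only addition is your Step 3, which proves the fixed-$x$ convergence $W_1(\pi^R_x,\pi_x)\to0$ that the paper just calls ``the usual compactification''; your supporting-line minorant even gives $P_{\pi^R_x}(y_0)=P_{\pi_x}(y_0)$ for all large $R$, and the case $s=1$ is immediate because $(y-m_x)^+\le P_{\rho_x}$ by Jensen.
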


\begin{proof}
For $|m_x|\le R$, the two-point law in \eqref{eq:truncate-kernel} has barycentre
$m_x$.  Hence $\pi_x^R\cvd\pi_x$ and $\pi_x^R$ has the same barycentre as $\pi_x$, so
$\pi_x^R\cv\pi_x$.  For $|m_x|>R$, Jensen's inequality gives
$\delta_{m_x}\cv\pi_x$.  Therefore $\nu^R\cv\nu$, and
\[
\int y\,\pi_x^R(dy)=m_x\le x,
\]
so $\pi^R$ is a supermartingale coupling.

For fixed $x$, $W_1(\pi_x^R,\pi_x)\to0$ as $R\to\infty$; this is the usual compactification
of a probability measure by its convex-order infimum with a two-point law of the same
barycentre.  Moreover,
\[
W_1(\pi_x^R,\pi_x)
\le
\int |y|\,\pi_x^R(dy)+\int |y|\,\pi_x(dy)
\le
2\int |y|\,\pi_x(dy),
\]
where the last inequality uses $\pi_x^R\cv\pi_x$.  Dominated convergence gives
\[
\mathcal{AW}_1(\pi^R,\pi)
\le
\int W_1(\pi_x^R,\pi_x)\,\mu(dx)
\longrightarrow0.
\]
\end{proof}

\begin{lemma}\label{lem:contract}
Let $\pi=\mu(dx)\pi_x(dy)\in\Pi_S(\mu,\nu)$ and let $\alpha\in(0,1)$.  Set
\[
T_{x,\alpha}(y):=\alpha y+(1-\alpha)x,
\qquad
\pi_x^\alpha:=(T_{x,\alpha})_\#\pi_x,
\qquad
\pi^\alpha:=\mu(dx)\pi_x^\alpha(dy),
\]
and denote the second marginal of $\pi^\alpha$ by $\nu^\alpha$.  Then
$\pi^\alpha\in\Pi_S(\mu,\nu^\alpha)$ and
\[
\mathcal{AW}_1(\pi^\alpha,\pi)
\le
(1-\alpha)\left(\int |x|\,\mu(dx)+\int |y|\,\nu(dy)\right).
\]
\end{lemma}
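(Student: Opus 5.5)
The proof splits into two checks: first the supermartingale property of $\pi^\alpha$, then the $\mathcal{AW}_1$ bound obtained from the identity coupling of the first marginals.

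\textbf{Supermartingale property.} By construction the first marginal of $\pi^\alpha$ is $\mu$ and its second marginal is $\nu^\alpha$, so $\pi^\alpha\in\Pi(\mu,\nu^\alpha)$. Each $\pi_x^\alpha$ is the image of $\pi_x$ under an affine map, so it lies in $\mathcal P_1$ whenever $\pi_x$ does, which holds for $\mu$-a.e.\ $x$. Its barycentre is computed by linearity:
\[
\int y\,\pi_x^\alpha(dy)=\alpha\int y\,\pi_x(dy)+(1-\alpha)x\le \alpha x+(1-\alpha)x=x
\]
for $\mu$-a.e.\ $x$, using $\int y\,\pi_x(dy)\le x$. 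Hence $\pi^\alpha\in\Pi_S(\mu,\nu^\alpha)$. The map $x\mapsto\pi^\alpha_x$ is measurable because it is the composition of the kernel with a jointly measurable transformation.

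\textbf{Bound on $\mathcal{AW}_1$.} In \eqref{eq:AW_def} I take the admissible coupling $\gamma=(\mathrm{id},\mathrm{id})_\#\mu\in\Pi(\mu,\mu)$. This kills the term $|x-x'|$ and gives
\[
\mathcal{AW}_1(\pi^\alpha,\pi)\le\int W_1(\pi_x^\alpha,\pi_x)\,\mu(dx).
\]
For fixed $x$, I couple $\pi_x^\alpha$ and $\pi_x$ through the map $y\mapsto(T_{x,\alpha}(y),y)$ pushed forward by $\pi_x$. Since $|T_{x,\alpha}(y)-y|=(1-\alpha)|x-y|$, this yields
\[
W_1(\pi_x^\alpha,\pi_x)\le (1-\alpha)\int|x-y|\,\pi_x(dy)\le(1-\alpha)\Big(|x|+\int|y|\,\pi_x(dy)\Big).
\]
Integrating against $\mu$ and using $\int\!\!\int|y|\,\pi_x(dy)\,\mu(dx)=\int|y|\,\nu(dy)$ gives the stated bound.

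I expect no real obstacle. The only points needing care are the measurability of the kernel $x\mapsto\pi_x^\alpha$ and the fact that the diagonal coupling is admissible in \eqref{eq:AW_def}; both are standard.
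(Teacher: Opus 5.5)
Your proposal is correct and follows essentially the same route as the paper. The paper also computes the conditional barycentre $\alpha\int y\,\pi_x(dy)+(1-\alpha)x\le x$, and obtains the $\mathcal{AW}_1$ bound by coupling $y$ with $T_{x,\alpha}(y)$ via $|T_{x,\alpha}(y)-y|=(1-\alpha)|x-y|$. You additionally spell out the diagonal coupling of the first marginals and the final integration against $\mu$.
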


\begin{proof}
The conditional barycentre of $\pi_x^\alpha$ is
$\alpha\int y\pi_x(dy)+(1-\alpha)x\le x$.  The displayed estimate follows by coupling
$y$ with $T_{x,\alpha}(y)$ and using
$|T_{x,\alpha}(y)-y|=(1-\alpha)|x-y|$.
\end{proof}

\begin{lemma}\label{lem:put_minmax}
Let $\eta^k\to\eta$ and $\chi^k\to\chi$ in $W_1$.  Then
\[
W_1(\eta^k\vee_{cd}\chi^k,\eta\vee_{cd}\chi)\longrightarrow0,
\qquad
W_1(\eta^k\wedge_{cd}\chi^k,\eta\wedge_{cd}\chi)\longrightarrow0.
\]
\end{lemma}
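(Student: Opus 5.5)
The plan is to work entirely at the level of put functions, using Lemma~\ref{lem:put}. On $\Pc_1$, $W_1$-convergence is equivalent to uniform convergence of put functions together with convergence of barycentres, and $\sup_x|P_{\eta^k}-P_\eta|\le W_1(\eta^k,\eta)$. So it suffices to show two things for each of $\eta^k\vee_{cd}\chi^k$ and $\eta^k\wedge_{cd}\chi^k$: the put functions converge uniformly (or pointwise) to those of the limiting operations, and the barycentres converge.

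For the join, we have $P_{\eta^k\vee_{cd}\chi^k}=P_{\eta^k}\vee P_{\chi^k}$. Since $|a\vee b-a'\vee b'|\le|a-a'|\vee|b-b'|$, we get
\[
\sup_x\big|P_{\eta^k}\vee P_{\chi^k}-P_\eta\vee P_\chi\big|\le W_1(\eta^k,\eta)\vee W_1(\chi^k,\chi)\to0 .
\]
For the barycentres, the stated identity gives $\ba(\eta^k\vee_{cd}\chi^k)=\ba(\eta^k)\wedge\ba(\chi^k)$, which converges to $\ba(\eta)\wedge\ba(\chi)$ because the barycentres of $W_1$-convergent sequences converge. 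Lemma~\ref{lem:put} then yields the first claim.

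For the meet, the first step is the Lipschitz property of the convex-minorant map with respect to the sup norm. If $\|h-h'\|_\infty\le\epsilon$, then $h^c-\epsilon$ is a convex minorant of $h'$, so $h^c-\epsilon\le h'^c$, and symmetrically. Combined with the same min estimate as above, this gives
\[
\sup_x\big|(P_{\eta^k}\wedge P_{\chi^k})^c-(P_\eta\wedge P_\chi)^c\big|\le W_1(\eta^k,\eta)\vee W_1(\chi^k,\chi).
\]
The barycentre of the meet is $\ba(\eta^k)\vee\ba(\chi^k)$, which converges to $\ba(\eta)\vee\ba(\chi)$.

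The main obstacle, which I would treat carefully, is to check that $(P_\eta\wedge P_\chi)^c$ really is the put function of a probability measure in $\Pc_1$ with that barycentre, so that Lemma~\ref{lem:put} applies. Concretely, one has to check three things:
\begin{itemize}
\item the convex minorant is nonnegative and nondecreasing, because the constant $0$ is a convex minorant and convex minorants of nondecreasing functions can be taken nondecreasing;
\item it tends to $0$ at $-\infty$, since it is squeezed between $0$ and $P_\eta$;
\item it has asymptote $x-(\ba(\eta)\vee\ba(\chi))$ at $+\infty$, because the affine function $x-\max(\ba)$ is a minorant of both $P_\eta$ and $P_\chi$ (indeed $P_\eta(x)\ge x-\ba(\eta)$), while the minorant is bounded above by $\min(P_\eta,P_\chi)$, whose asymptote is that same line.
\end{itemize}
This is what the paper implicitly uses in defining $\wedge_{cd}$. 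Once it is granted, the barycentre convergence in the meet case follows. Alternatively, uniform convergence of the put functions alone already forces convergence of the asymptotic intercepts, so the barycentre check is redundant given the uniform bound above.
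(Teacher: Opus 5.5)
Your proposal is correct and follows essentially the same route as the paper. Both reduce via Lemma~\ref{lem:put} to uniform convergence of put functions (plus barycentres), handle the join by the elementary estimate for maxima, and handle the meet by the sup-norm nonexpansiveness of the largest-convex-minorant map, which is exactly what the paper's ``variational characterisation'' with $\pm\delta$ amounts to. Your two additions fill in details the paper leaves implicit: the check that $(P_\eta\wedge P_\chi)^c$ is a genuine put function with asymptote $x-\max(\bar\eta,\bar\chi)$, and the observation that uniform convergence on $\R$ already forces convergence of barycentres.
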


\begin{proof}
By Lemma~\ref{lem:put}, $W_1$-convergence is equivalent to uniform convergence of put
functions together with convergence of barycentres.  The first assertion follows from
\[
P_{\eta^k\vee_{cd}\chi^k}=P_{\eta^k}\vee P_{\chi^k}
\longrightarrow
P_\eta\vee P_\chi=P_{\eta\vee_{cd}\chi}
\]
uniformly.  For the infimum, uniform convergence of
$P_{\eta^k}\wedge P_{\chi^k}$ to $P_\eta\wedge P_\chi$ and the variational
characterisation of the convex minorant give, for every $\delta>0$ and all large $k$,
\[
(P_\eta\wedge P_\chi)^c-\delta
\le
(P_{\eta^k}\wedge P_{\chi^k})^c
\le
(P_\eta\wedge P_\chi)^c+\delta.
\]
Thus the put functions converge uniformly.  The barycentre formula following
\eqref{eq:cd-lattice-def} gives the convergence of barycentres, and Lemma~\ref{lem:put}
concludes the proof.
\end{proof}

\begin{lemma}\label{lem:defect-vanishes}
Let $\gamma^k=\eta^k(dx)\gamma_x^k(dy)$ and
$\gamma=\eta(dx)\gamma_x(dy)$ be finite couplings with equal total masses.  Assume
$\gamma\in\Pi_S(\eta,\chi)$ and
$\mathcal{AW}_1(\gamma^k,\gamma)\to0$.  Then
\[
\int\left(\int y\,\gamma_x^k(dy)-x\right)^+\eta^k(dx)
\longrightarrow0.
\]
\end{lemma}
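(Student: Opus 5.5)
The strategy is to bound the positive part of the barycentre defect by a Lipschitz functional of the pair $(x,\gamma_x)$ that vanishes $\eta$-a.e. along the limit. Set
\[
\Phi(x,p):=\Big(\int y\,p(dy)-x\Big)^+,\qquad (x,p)\in\R\times\Pc_1.
\]
Using the embedding $J$, the quantity to control is $\int\Phi\,dJ(\gamma^k)$, while $\int\Phi\,dJ(\gamma)=0$ because $\gamma\in\Pi_S(\eta,\chi)$.

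The key step is that $\Phi$ is $1$-Lipschitz for the metric $|x-x'|+W_1(p,p')$. Indeed $t\mapsto t^+$ is $1$-Lipschitz, and $|\int y\,p(dy)-\int y\,p'(dy)|\le W_1(p,p')$, since $y\mapsto y$ is $1$-Lipschitz and the duality \eqref{eq:Wass} applies. Therefore
\[
|\Phi(x,p)-\Phi(x',p')|\le |x-x'|+W_1(p,p').
\]

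Next, fix any $\theta\in\Pi(\eta^k,\eta)$, which makes sense because the masses are equal. Using $\Phi(x',\gamma_{x'})=0$ for $\eta$-a.e.\ $x'$, we get
\[
\int\Phi(x,\gamma^k_x)\,\eta^k(dx)=\int\big(\Phi(x,\gamma^k_x)-\Phi(x',\gamma_{x'})\big)\,\theta(dx,dx')\le\int\big(|x-x'|+W_1(\gamma^k_x,\gamma_{x'})\big)\,\theta(dx,dx').
\]
Taking the infimum over $\theta$ and using the definition \eqref{eq:AW_def} with $r=1$ bounds the left side by $\mathcal{AW}_1(\gamma^k,\gamma)$, which tends to $0$.

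The only point needing care is that everything is well defined. Finiteness of first moments ensures $\int y\,\gamma^k_x(dy)$ exists $\eta^k$-a.e. The extension of $\mathcal{AW}_1$ to finite measures of equal mass is the one already used in the paper, so nothing beyond the Lipschitz estimate is genuinely an obstacle. No normalisation of masses is needed, since the argument is linear in $\theta$.
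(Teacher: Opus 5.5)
Your proof is correct and is essentially the paper's argument: the paper also bounds the integrand by $|x-x'|+W_1(\gamma^k_x,\gamma_{x'})$, using $\int y\,\gamma_{x'}(dy)\le x'$ for $\eta$-a.e.\ $x'$, and integrates against a coupling in $\Pi(\eta^k,\eta)$. The only difference is that the paper uses an $\mathcal{AW}_1$-optimal coupling $\zeta^k$, whereas you integrate against an arbitrary $\theta$ and then take the infimum, which avoids needing an optimizer to exist; your explicit $1$-Lipschitz estimate for $\Phi$ spells out the paper's one-line inequality.
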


\begin{proof}
Let $\zeta^k\in\Pi(\eta^k,\eta)$ be an optimizer in the definition of
$\mathcal{AW}_1(\gamma^k,\gamma)$.  Since
$\int y\gamma_{x'}(dy)\le x'$ for $\eta$-a.e. $x'$,
\begin{align*}
&\int\left(\int y\,\gamma_x^k(dy)-x\right)^+\eta^k(dx) \\
&\quad\le
\int\left(
\left|x-x'\right|+
W_1(\gamma_x^k,\gamma_{x'})
\right)\zeta^k(dx,dx')
\le
\mathcal{AW}_1(\gamma^k,\gamma).
\end{align*}
\end{proof}

\subsection{The strict irreducible component}\label{subsec:strict-component}

From now until the final diagonal argument, we work on one strict irreducible component.
After normalising its mass to one, we assume
\begin{equation}\label{eq:strict-standing}
\pi\in\Pi_S(\mu,\nu),
\qquad
I:=\{x\in\R:P_\mu(x)<P_\nu(x)\}=(\ell,+\infty),
\qquad
\mu(I)=1,
\end{equation}
and
\begin{equation}\label{eq:genuine-defect}
\Delta:=\bar\mu-\bar\nu>0.
\end{equation}
Fix $\varepsilon\in(0,1/4)$.  All objects constructed in the next subsections depend on
$\varepsilon$, but this dependence is suppressed until the final diagonal step.

Write
\[
m_x:=\int y\,\pi_x(dy).
\]
Choose numbers
\[
\ell<\tilde a<a<b<+\infty
\]
and a compact set $K\subset(a,b)$ such that
\begin{equation}\label{eq:K-choice}
\mu(K^c)+
\int_{K^c}\left(|x|+\int |y|\,\pi_x(dy)\right)\mu(dx)
\le\varepsilon,
\end{equation}
and choose $R_0>0$ such that
\begin{equation}\label{eq:R0-choice}
|m_x|\le R_0
\quad\text{for }\mu\text{-a.e. }x\in K.
\end{equation}
The point $\tilde a$ is chosen so that $P_\nu(\tilde a)>P_\mu(\tilde a)$, hence
$\nu(( -\infty,\tilde a))>0$.

Apply Lemma~\ref{lem:truncate} to obtain $\pi^R$ and then Lemma~\ref{lem:contract} to
$\pi^R$.  Thus
\begin{equation}\label{eq:piRA-def}
\pi^{R,\alpha}:=\mu(dx)\pi_x^{R,\alpha}(dy),
\qquad
\pi_x^{R,\alpha}:=(T_{x,\alpha})_\#\pi_x^R,
\end{equation}
where $T_{x,\alpha}(y):=\alpha y+(1-\alpha)x$.  Let $\nu^{R,\alpha}$ be the second
marginal of $\pi^{R,\alpha}$.

\begin{proposition}\label{prop:pi_R_alpha}
There exist $R>R_0\vee |a|\vee |b|$ and $\alpha\in(0,1)$ such that
\begin{equation}\label{eq:RA-close}
\mathcal{AW}_1\big(\varepsilon\pi+(1-\varepsilon)\pi^{R,\alpha},\pi\big)
<\varepsilon,
\end{equation}
\begin{equation}\label{eq:alpha-choice}
\alpha>\frac{2R-a-\tilde a}{2R-2\tilde a},
\end{equation}
and the following properties hold.

\begin{enumerate}[(i)]
\item There is a compact interval $L\Subset I$ such that
$\mu|_K(dx)\pi_x^{R,\alpha}(dy)$ is concentrated on $K\times L^\circ$.

\item The measure $\nu^{R,\alpha}$ has positive mass strictly to the left of $K$:
\begin{equation}\label{eq:left-free-mass-limit}
\nu^{R,\alpha}\left(\left(\ell,\frac{a+\tilde a}{2}\right)\right)>0.
\end{equation}

\item We have
\begin{equation}\label{eq:RA-order}
\pi^{R,\alpha}\in\Pi_S(\mu,\nu^{R,\alpha}),
\qquad
\nu^{R,\alpha}\cvd\nu.
\end{equation}
Moreover,
\begin{equation}\label{eq:RA-defect}
\Delta^{R,\alpha}:=\Delta(\mu,\nu^{R,\alpha})
=\alpha\Delta>0.
\end{equation}
\end{enumerate}
\end{proposition}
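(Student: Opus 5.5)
We first choose $R$ large, then $\alpha$ close to $1$. By Lemma~\ref{lem:truncate}, $\mathcal{AW}_1(\pi^R,\pi)\to0$ as $R\to\infty$. By Lemma~\ref{lem:contract}, applied to $\pi^R$,
\[
\mathcal{AW}_1(\pi^{R,\alpha},\pi^R)\le(1-\alpha)\Big(\int|x|\,d\mu+\int|y|\,d\nu^R\Big)\le(1-\alpha)\Big(\int|x|\,d\mu+\int|y|\,d\nu\Big),
\]
where the last step uses $\nu^R\cv\nu$. Since $\mathcal{AW}_1$ is convex under mixtures with a common first marginal (couple each part with the identity on the first coordinate),
\[
\mathcal{AW}_1\big(\varepsilon\pi+(1-\varepsilon)\pi^{R,\alpha},\pi\big)\le(1-\varepsilon)\,\mathcal{AW}_1(\pi^{R,\alpha},\pi).
\]
We therefore first fix $R>R_0\vee|a|\vee|b|$ with $\mathcal{AW}_1(\pi^R,\pi)<\varepsilon/2$. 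Then we take $\alpha$ close enough to $1$ that both the contraction error is below $\varepsilon/2$ and \eqref{eq:alpha-choice} holds. This gives \eqref{eq:RA-close}. Condition \eqref{eq:alpha-choice} is only a lower bound on $\alpha$, and its right-hand side is $<1$ because $a>\tilde a$.

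For (i), we take $x\in K\subset(a,b)$ with $|m_x|\le R_0<R$. Then $\pi^R_x$ is the first case of \eqref{eq:truncate-kernel}, whose $\cvd$-infimum with a law on $\{-R,R\}$ has support in $[-R,R]$. So $\pi_x^{R,\alpha}$ is supported in
\[
[\alpha(-R)+(1-\alpha)x,\ \alpha R+(1-\alpha)x]\subset[-\alpha R+(1-\alpha)a,\ \alpha R+(1-\alpha)b].
\]
The lower endpoint must lie strictly inside $I=(\ell,\infty)$. This is where \eqref{eq:alpha-choice} enters, though it may not be directly what is needed. I expect the paper's intent is that the left end of the support of $\pi^R_x$ lies above $\ell$: on the strict component every $\pi_x$ is concentrated on $[\ell,\infty)$, and the truncation should not move mass below the support of $\pi_x$. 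If $\ell=-\infty$ nothing is needed. If $\ell>-\infty$, we use that $\mathcal I_{\pi_x}\subset\bar I$ and that $\pi_x^R\cvd\pi_x$ keeps the left endpoint $\ge\ell$; the infimum, of barycentre $m_x$, should be supported in $[\ell\vee(-R),R]$. Then the contraction toward $x>a>\ell$ strictly separates the support from $\ell$, giving a compact $L\Subset I$ containing all supports uniformly in $x\in K$ (the lower bound is $\alpha\ell+(1-\alpha)a>\ell$). Making this support claim rigorous for the $\cvd$-infimum is the step I expect to be the main obstacle. I would verify it via put functions: $P_{\pi_x}$ vanishes on $(-\infty,\ell]$, hence so does the convex minorant of $P_{\pi_x}\wedge P_{\text{two-point}}$.

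For (iii), the kernel $\pi^{R,\alpha}_x$ has barycentre $\alpha m_x+(1-\alpha)x\le x$, so $\pi^{R,\alpha}\in\Pi_S(\mu,\nu^{R,\alpha})$. Integrating in $x$,
\[
\overline{\nu^{R,\alpha}}=\alpha\bar\nu+(1-\alpha)\bar\mu,
\]
which gives $\Delta^{R,\alpha}=\alpha\Delta>0$. For $\nu^{R,\alpha}\cvd\nu$, we test against a convex nonincreasing $f$. Convexity gives
\[
f(\alpha y+(1-\alpha)x)\le\alpha f(y)+(1-\alpha)f(x).
\]
Integrating against $\pi^R$ yields
\[
\int f\,d\nu^{R,\alpha}\le\alpha\int f\,d\nu^R+(1-\alpha)\int f\,d\mu\le\int f\,d\nu,
\]
using $\nu^R\cv\nu$ and $\mu\cvd\nu$.

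For (ii), mass of $\nu^{R,\alpha}$ to the left of $(a+\tilde a)/2$ comes from $x$ near $\ell$, or from $\pi^R_x$ charging $-R$. This is where \eqref{eq:alpha-choice} is used: a point $-R$ transported from $x\le a$ lands at $-\alpha R+(1-\alpha)x<(a+\tilde a)/2$. Indeed, rearranging \eqref{eq:alpha-choice} gives $\alpha(2R-2\tilde a)>2R-a-\tilde a$. Alternatively, I would argue by limits: as $\alpha\to1$ and $R\to\infty$, $\nu^{R,\alpha}\to\nu$ weakly, and $\nu((\ell,\tilde a))>0$ since $\tilde a>\ell$ lies in $I$ with $P_\nu(\tilde a)>P_\mu(\tilde a)\ge0$. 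Portmanteau on the open set $(\ell,(a+\tilde a)/2)\supset(\ell,\tilde a]$ then gives positivity for suitable $R,\alpha$. These constraints are compatible with the previous choices, since all of them require only $R$ large and $\alpha$ close to $1$.
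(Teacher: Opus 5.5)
Your treatment of \eqref{eq:RA-close} and (iii) is fine. In (i) you are more careful than the paper: its interval $[\alpha(-R)+(1-\alpha)a,\alpha R+(1-\alpha)b]\cap I$ is not contained in a compact subset of $I$ when $\ell>-\infty$. The step you flag as the main obstacle takes one line. If $\ell>-\infty$, then $P_\nu(\ell)=P_\mu(\ell)=0$, since $\ell\notin I$ and $\mu(I)=1$. So $\nu$, and hence $\pi_x$ for $\mu$-a.e.\ $x$, gives no mass to $(-\infty,\ell)$. Testing $\pi^R_x\cv\pi_x$ against $(\ell-y)^+$ transfers this to $\pi^R_x$. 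Condition \eqref{eq:alpha-choice} plays no role in (i). A small point on \eqref{eq:RA-close}: the mixture bound should read $\le(1-\varepsilon)\int W_1(\pi^{R,\alpha}_x,\pi_x)\,\mu(dx)$. With $\mathcal{AW}_1(\pi^{R,\alpha},\pi)$ on the right it fails in general. The synchronous quantity is exactly what the proofs of Lemmas~\ref{lem:truncate} and~\ref{lem:contract} control, so nothing is lost.

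The genuine gap is in (ii). Your Portmanteau argument needs $\nu\big((\ell,\tfrac{a+\tilde a}2)\big)>0$, which you derive from $\nu((\ell,\tilde a))>0$. But $P_\nu(\tilde a)>P_\mu(\tilde a)\ge0$ only gives $\nu([\ell,\tilde a))>0$, and this mass can sit entirely in an atom at $\ell$.
\begin{itemize}
\item Take $\mu=\delta_1$ and $\nu=\tfrac12(\delta_0+\delta_1)$. Then $I=(0,\infty)$ and $\Delta=\tfrac12$.
\item For any $0<\tilde a<a<1<b$ you have $P_\nu(\tilde a)=\tilde a/2>0$, yet $\nu\big((0,\tfrac{a+\tilde a}2)\big)=0$. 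So weak convergence of $\nu^{R,\alpha}$ to $\nu$ gives only the trivial bound.
\item Here $\nu^{R,\alpha}=\tfrac12\delta_{1-\alpha}+\tfrac12\delta_1$ does charge $(0,\tfrac{a+\tilde a}2)$. It does so only because the contraction pushes the atom at $\ell$ inward, and a quantitative condition on $\alpha$ relative to $R$ places it left of $\tfrac{a+\tilde a}2$.
\end{itemize}
Your other heuristic does not help either. For $\ell>-\infty$ and $R>|\ell|$ the kernels $\pi^R_x$ never charge $-R$, and $\mu$ need not charge any neighbourhood of $\ell$.

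What is missing is the paper's direct argument, which is where \eqref{eq:alpha-choice} actually enters.
\begin{itemize}
\item Apply Portmanteau to $\nu^R\to\nu$ on the open set $(-\infty,\tilde a)$. Together with $\mu([R,\infty))\to0$, this gives $\int_{\{x<R\}}\pi^R_x((-\infty,\tilde a))\,\mu(dx)>0$ for $R$ large.
\item For $x<R$ and $y<\tilde a$, condition \eqref{eq:alpha-choice} is exactly the inequality $\alpha y+(1-\alpha)x<\alpha\tilde a+(1-\alpha)R<\tfrac{a+\tilde a}2$.
\item Since $y\ge\ell$ and $x>\ell$, you also get $\alpha y+(1-\alpha)x>\ell$, by the support fact you established for (i).
\end{itemize}
Hence $\nu^{R,\alpha}\big((\ell,\tfrac{a+\tilde a}2)\big)\ge\int_{\{x<R\}}\pi^R_x((-\infty,\tilde a))\,\mu(dx)>0$. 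Here $R$ is fixed first and then $\alpha$, which is compatible with your other choices.
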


\begin{proof}
By Lemmas~\ref{lem:truncate} and \ref{lem:contract}, $\pi^{R,\alpha}$ can be made
arbitrarily close to $\pi$ in $\mathcal{AW}_1$ by first taking $R$ large and then taking
$\alpha$ close to one.  This gives \eqref{eq:RA-close}, and we may impose
\eqref{eq:alpha-choice} at the same time.

Since $P_\nu(\tilde a)>P_\mu(\tilde a)\ge0$, we have
$\nu(( -\infty,\tilde a))>0$.  As $\nu^R\to\nu$ in $W_1$, for large $R$,
$\nu^R(( -\infty,\tilde a))>0$.  Note
\[
\nu^R(( -\infty,\tilde a)) = \int_{x\ge R}\pi_x^R(( -\infty,\tilde a))\,\mu(dx)+ \int_{x<R}\pi_x^R(( -\infty,\tilde a))\,\mu(dx).
\]
So,  for large $R$,  
\[
 \int_{x<R}\pi_x^R(( -\infty,\tilde a))\,\mu(dx)>0.
\]
If $x<R$ and $y<\tilde a$, then \eqref{eq:alpha-choice} gives
\[
\alpha y+(1-\alpha)x
<
\alpha\tilde a+(1-\alpha)R
<
\frac{a+\tilde a}{2}.
\]
This proves \eqref{eq:left-free-mass-limit}.

For $x\in K$, \eqref{eq:R0-choice} and $R>R_0$ imply that $\pi_x^R$ is supported on
$[-R,R]$.  Hence $\pi_x^{R,\alpha}$ is supported on the compact interval
\[
\big[\alpha(-R)+(1-\alpha)a,
       \alpha R+(1-\alpha)b\big]\cap I.
\]
Choosing a compact interval $L\Subset I$ whose interior contains this interval gives the
localisation statement.

The supermartingale property follows from
\[
\int y\,\pi_x^{R,\alpha}(dy)
=
\alpha m_x+(1-\alpha)x
\le x.
\]
To prove $\nu^{R,\alpha}\cvd\nu$, let $\varphi$ be convex and nonincreasing.  Since
$\pi_x^R\cv\pi_x$ and $m_x\le x$,
\begin{align*}
\int\varphi(y)\,\pi_x^{R,\alpha}(dy)
&=\int\varphi(\alpha y+(1-\alpha)x)\,\pi_x^R(dy)\\
&\le
\alpha\int\varphi(y)\,\pi_x^R(dy)+(1-\alpha)\varphi(x)\\
&\le
\alpha\int\varphi(y)\,\pi_x(dy)+(1-\alpha)\int\varphi(y)\,\pi_x(dy).
\end{align*}
In the last step we used
$\varphi(x)\le\varphi(m_x)\le\int\varphi(y)\,\pi_x(dy)$.  Integrating in $x$ gives
$\nu^{R,\alpha}\cvd\nu$.
Finally, Lemma~\ref{lem:truncate} preserves the conditional barycentre $m_x$, hence
\[
\bar\nu^{R,\alpha}
=
\int\big(\alpha m_x+(1-\alpha)x\big)\mu(dx)
=
\alpha\bar\nu+(1-\alpha)\bar\mu,
\]
which gives \eqref{eq:RA-defect}.
\end{proof}

\subsection{Localising the approximating sequence}\label{subsec:localise-correct}

We now construct the intermediate second marginal.  Define
\begin{equation}\label{eq:def-nuRAk}
\nu^{R,\alpha,k}
:=
\nu^k\wedge_{\rm cd}\big(\mu^k\vee_{\rm cd}\nu^{R,\alpha}\big).
\end{equation}
Then Lemma~\ref{lem:put_minmax} gives
\begin{equation}\label{eq:nuRAk-conv}
W_1(\nu^{R,\alpha,k},\nu^{R,\alpha})\longrightarrow0,
\end{equation}
and, by construction,
\begin{equation}\label{eq:nuRAk-orders}
\mu^k\cvd\nu^{R,\alpha,k}\cvd\nu^k
\quad\text{for every }k.
\end{equation}
By ordinary stability of couplings in adapted Wasserstein distance, see
\cite[Proposition~2.3]{BJMP1}, there exists
\begin{equation}\label{eq:qk-def}
q^k=\mu^k(dx)q_x^k(dy)\in\Pi(\mu^k,\nu^{R,\alpha,k})
\end{equation}
such that
\begin{equation}\label{eq:qk-conv}
\mathcal{AW}_1(q^k,\pi^{R,\alpha})\longrightarrow0.
\end{equation}
Apply Lemma~\ref{lemm:3.4} (ii) to $q^k\to\pi^{R,\alpha}$ with
\[
A:=K,
\qquad
B:=\widetilde K:=\left(\frac{3a+\tilde a}{4},\,b+1\right),
\qquad
C:=L^\circ.
\]
We obtain 
\begin{equation}\label{eq:hatpi-def}
\mathcal M_1 \ni\hat\mu^k\le\mu^k, \qquad \mathcal M_1 \ni \hat\nu^k\le\nu^{R,\alpha,k},\qquad \hat\pi^k=\hat\mu^k(dx)\hat\pi_x^k(dy)
\in\Pi(\hat\mu^k,\hat\nu^k), 
\end{equation}
where $\hat\pi^k$ is 
concentrated on $\widetilde K\times L^\circ$.  Together with numbers $\theta_k\downarrow0$,
it holds that 
\begin{equation}\label{eq:hatpi-conv}
\mathcal{AW}_1\big(\hat\pi^k,(1-\theta_k)\pi^{R,\alpha}|_{K\times\R}\big)
\longrightarrow0.
\end{equation}
Let
\begin{equation}\label{eq:breve-nu-def}
\breve\nu^{R,\alpha}
:=
\text{second marginal of }\pi^{R,\alpha}|_{K\times\R}.
\end{equation}
We next reserve some mass of $\nu^{R,\alpha,k}$ lying strictly to the left of
$\widetilde K$.  By \eqref{eq:left-free-mass-limit}, choose a compact set
\[
L_-
\subset
\left(\ell,\frac{a+\tilde a}{2}\right)
\quad\text{such that}\quad
\nu^{R,\alpha}(L_-)>0.
\]
Choose $\tilde\ell,\tilde\rho\in I$ such that
\begin{equation}\label{eq:J-def}
\tilde\ell<\inf(L\cup L_-),
\qquad
\tilde\rho>\sup(L\cup\widetilde K),
\qquad
J:=[\tilde\ell,\tilde\rho]\Subset I.
\end{equation}
Set
\begin{equation}\label{eq:left-free-interval-def}
\widetilde L_-:=\left(\tilde\ell,\frac{a+\tilde a}{2}\right),
\qquad
\nu_-^k:=\nu^{R,\alpha,k}|_{\widetilde L_-}.
\end{equation}
Since $\widetilde L_-$ is open and $\nu^{R,\alpha}(\widetilde L_-)>0$,
\eqref{eq:nuRAk-conv} and Portmanteau's theorem imply that there are $\delta>0$ and
$k_0$ such that
\begin{equation}\label{eq:left-mass-lower}
\nu_-^k(\R)\ge\delta
\quad\text{for every }k\ge k_0.
\end{equation}
Let
\begin{equation}\label{eq:e-distance}
e:=\operatorname{dist}(\widetilde L_-,\widetilde K)>0.
\end{equation}

\subsection{Barycentre correction}\label{subsec:barycentre-correction}

The localised kernels $\hat\pi_x^k$ need not satisfy the supermartingale inequality.  We
correct them by mixing in a small amount of the left-free mass $\nu_-^k$.  For
$k\ge k_0$ and $\hat\mu^k$-a.e. $x$, set
\begin{equation}\label{eq:ck-dk-def}
A_k(x):=\left(\int y\,\hat\pi_x^k(dy)-x\right)^+,
\qquad
B_k(x):=\int (x-y)\,\nu_-^k(dy),
\end{equation}
\[
c_k(x):=\frac{A_k(x)}{B_k(x)},
\qquad
 d_k(x):=1+c_k(x)\nu_-^k(\R).
\]
By \eqref{eq:left-mass-lower} and \eqref{eq:e-distance},
\begin{equation}\label{eq:Bk-lower}
B_k(x)\ge e\nu_-^k(\R)\ge e\delta
\quad\text{for }x\in\widetilde K.
\end{equation}
Define
\begin{equation}\label{eq:def-tilde-kernel}
\tilde\pi_x^k
:=
\frac{\hat\pi_x^k+c_k(x)\nu_-^k}{d_k(x)},
\qquad
\tilde\pi^k:=\hat\mu^k(dx)\tilde\pi_x^k(dy),
\end{equation}
and denote the second marginal of $\tilde\pi^k$ by $\tilde\nu^k$.

\begin{proposition}\label{prop:correctSup}
For all sufficiently large $k$,
\begin{equation}\label{eq:tildepi-supermart}
\tilde\pi^k\in\Pi_S(\hat\mu^k,\tilde\nu^k).
\end{equation}
Moreover,
\begin{equation}\label{eq:tildepi-conv}
\mathcal{AW}_1\big(\tilde\pi^k,(1-\theta_k)\pi^{R,\alpha}|_{K\times\R}\big)
\longrightarrow0,
\end{equation}
and
\begin{equation}\label{eq:tilde-nu-dom}
(1-2\varepsilon)\tilde\nu^k
\le
(1-\varepsilon)\nu^{R,\alpha,k}
\quad\text{for all sufficiently large }k.
\end{equation}
\end{proposition}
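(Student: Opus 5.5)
We verify the three claims of Proposition~\ref{prop:correctSup} in turn; the only ingredient that is not pure bookkeeping is a uniform bound on the correction weights $c_k$.

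\textbf{Supermartingale property.} Since $\hat\pi^k$ is concentrated on $\widetilde K\times L^\circ$, for $\hat\mu^k$-a.e.\ $x$ we have $x\in\widetilde K$. Hence \eqref{eq:Bk-lower} applies, so $c_k(x)$ is well defined and finite. A direct computation gives
\[
d_k(x)\Big(\int y\,\tilde\pi^k_x(dy)-x\Big)
=\Big(\int y\,\hat\pi^k_x(dy)-x\Big)-c_k(x)B_k(x)
=\Big(\int y\,\hat\pi^k_x(dy)-x\Big)-A_k(x)\le 0 .
\]
Each $\tilde\pi^k_x$ is a probability measure by the normalisation $d_k$. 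This proves \eqref{eq:tildepi-supermart} with $\tilde\nu^k$ defined as the second marginal.

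\textbf{Convergence.} The key quantity is the correction weight $c_k(x)\nu^k_-(\R)$. Since $\nu^k_-(\R)\le 1$, \eqref{eq:Bk-lower} gives
\[
c_k(x)\nu_-^k(\R)\le \frac{A_k(x)}{e}\qquad\text{and}\qquad c_k(x)\le \frac{A_k(x)}{e\delta}.
\]
By Lemma~\ref{lem:defect-vanishes}, applied to \eqref{eq:hatpi-conv} with the limit $(1-\theta_k)\pi^{R,\alpha}|_{K\times\R}$ (a supermartingale coupling; the varying factor $1-\theta_k\to1$ is harmless, or one compares first with $\pi^{R,\alpha}|_{K\times\R}$), we get $\int A_k\,d\hat\mu^k\to0$.

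For the kernel estimate, $\tilde\pi^k_x$ is a mixture of $\hat\pi^k_x$ with weight $1/d_k(x)$ and of the normalised measure $\nu_-^k/\nu_-^k(\R)$ with weight $1-1/d_k(x)$. Both measures are supported in the bounded set $L\cup J$. Therefore
\[
W_1(\tilde\pi_x^k,\hat\pi_x^k)\le \Big(1-\frac1{d_k(x)}\Big)\operatorname{diam}(J\cup L)\le \frac{\operatorname{diam}(J\cup L)}{e}\,A_k(x).
\]
Using the identity coupling of the first marginals, $\mathcal{AW}_1(\tilde\pi^k,\hat\pi^k)\le C\int A_k\,d\hat\mu^k\to0$, and the triangle inequality with \eqref{eq:hatpi-conv} gives \eqref{eq:tildepi-conv}.

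\textbf{Domination \eqref{eq:tilde-nu-dom}.} This is the delicate step. We split
\[
\tilde\nu^k=\int \frac{\hat\pi^k_x}{d_k(x)}\,\hat\mu^k(dx)+\Big(\int\frac{c_k(x)}{d_k(x)}\,\hat\mu^k(dx)\Big)\nu^k_- .
\]
The first term is at most $\hat\nu^k\le\nu^{R,\alpha,k}$, because $d_k\ge1$. The second term is at most $\eta_k\nu^k_-$, where
\[
\eta_k:=\int c_k\,d\hat\mu^k\le \frac{1}{e\delta}\int A_k\,d\hat\mu^k\to0 .
\]
We need more than this, since the two terms might overlap on $\widetilde L_-$ and $\hat\nu^k$ could already exhaust $\nu^{R,\alpha,k}$ there. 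The support condition settles it: $\hat\nu^k$ lives on $L^\circ$, but $L$ may intersect $\widetilde L_-$, so support alone is not enough. Instead we use only the crude bound $\hat\nu^k+\eta_k\nu^k_-\le(1+\eta_k)\nu^{R,\alpha,k}$. Then
\[
(1-2\varepsilon)\tilde\nu^k\le(1-2\varepsilon)(1+\eta_k)\,\nu^{R,\alpha,k}\le(1-\varepsilon)\,\nu^{R,\alpha,k}
\]
as soon as $\eta_k\le\varepsilon/(1-2\varepsilon)$, which holds for all large $k$. The main obstacle is therefore only to secure the uniform lower bound $B_k\ge e\delta$ on $\widetilde K$, which guarantees $\eta_k\to0$; this is exactly what \eqref{eq:left-mass-lower} and \eqref{eq:e-distance} provide.
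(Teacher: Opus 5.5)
Your proof is correct and follows the paper's argument essentially line by line: the same barycentre identity giving $\int y\,\tilde\pi^k_x(dy)\le x$, the same mixture estimate $W_1(\tilde\pi^k_x,\hat\pi^k_x)\le(\tilde\rho-\tilde\ell)\,A_k(x)/e$ combined with Lemma~\ref{lem:defect-vanishes}, and the same crude domination $\tilde\nu^k\le(1+r_k)\,\nu^{R,\alpha,k}$. Two small clean-ups: the bound $c_k(x)\nu^k_-(\R)\le A_k(x)/e$ comes from $B_k(x)\ge e\,\nu^k_-(\R)$, not from $\nu^k_-(\R)\le 1$, and the hesitant aside about overlapping supports in the domination step can be deleted, since the crude bound alone finishes the argument.
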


\begin{proof}
The measure $\tilde\pi_x^k$ is a probability measure by definition.  If
$\int y\hat\pi_x^k(dy)\le x$, then $c_k(x)=0$.  Otherwise, the definition of $c_k(x)$
and the identity
$B_k(x)=x\nu_-^k(\R)-\int y\nu_-^k(dy)$ give
\[
\int y\,\tilde\pi_x^k(dy)=x.
\]
Thus \eqref{eq:tildepi-supermart} holds.

Since both $\nu_-^k/\nu_-^k(\R)$ and $\hat\pi_x^k$ are supported in
$J=[\tilde\ell,\tilde\rho]$, we can compare equal-mass measures and obtain
\begin{align*}
W_1(\tilde\pi_x^k,\hat\pi_x^k)
&\le
\frac{c_k(x)\nu_-^k(\R)}{d_k(x)}
W_1\left(\frac{\nu_-^k}{\nu_-^k(\R)},\hat\pi_x^k\right)\\
&\le
\frac{\tilde\rho-\tilde\ell}{e}\,A_k(x).
\end{align*}
By Lemma~\ref{lem:defect-vanishes} and \eqref{eq:hatpi-conv},
\[
\int A_k(x)\,\hat\mu^k(dx)\longrightarrow0.
\]
Hence $\mathcal{AW}_1(\tilde\pi^k,\hat\pi^k)\to0$, and \eqref{eq:tildepi-conv} follows
from \eqref{eq:hatpi-conv}.

For the domination of the second marginal, write
\[
\tilde\nu^k
=
\int \frac1{d_k(x)}\hat\pi_x^k\,\hat\mu^k(dx)
+
\nu_-^k\int \frac{c_k(x)}{d_k(x)}\,\hat\mu^k(dx).
\]
The first term is bounded above by $\hat\nu^k\le\nu^{R,\alpha,k}$, and
$\nu_-^k\le\nu^{R,\alpha,k}$.  Furthermore,
\[
r_k:=\int \frac{c_k(x)}{d_k(x)}\,\hat\mu^k(dx)
\le
\frac1{e\delta}\int A_k(x)\,\hat\mu^k(dx)
\longrightarrow0.
\]
Thus $\tilde\nu^k\le(1+r_k)\nu^{R,\alpha,k}$.  For all large $k$,
$(1-2\varepsilon)(1+r_k)\le1-\varepsilon$, which gives \eqref{eq:tilde-nu-dom}.
\end{proof}

\subsection{Completion of the residual marginals}\label{subsec:completion-gluing}

We now complete the sub-coupling $(1-2\varepsilon)\tilde\pi^k$ to a supermartingale
coupling whose first marginal is $\mu^k$ and whose second marginal is
$\varepsilon\nu^k+(1-\varepsilon)\nu^{R,\alpha,k}$.  Define
\begin{equation}\label{eq:residual-marginals-def}
\mu_{\rm rem}^k:=\mu^k-(1-2\varepsilon)\hat\mu^k,
\qquad
\nu_{\rm rem}^k:=
\varepsilon\nu^k+(1-\varepsilon)\nu^{R,\alpha,k}-(1-2\varepsilon)\tilde\nu^k.
\end{equation}
By \eqref{eq:tilde-nu-dom}, $\nu_{\rm rem}^k$ is a positive measure for all large $k$.
Let us prove
\begin{equation}\label{eq:rem-cvd}
\mu_{\rm rem}^k\cvd\nu_{\rm rem}^k.
\end{equation}
Equivalently,
\begin{equation}\label{eq:residual-put-goal}
P_{\varepsilon\nu^k+(1-\varepsilon)\nu^{R,\alpha,k}}-P_{\mu^k}
\ge
P_{(1-2\varepsilon)\tilde\nu^k}-P_{(1-2\varepsilon)\hat\mu^k}
\quad\text{on }\R.
\end{equation}
We prove \eqref{eq:residual-put-goal} on $J$ and on $J^c$ separately.

\begin{proposition}\label{prop:reduceTOjc}
For all sufficiently large $k$, inequality \eqref{eq:residual-put-goal} holds on $J$.
\end{proposition}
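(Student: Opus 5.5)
The plan is to compare both sides of \eqref{eq:residual-put-goal} with their limits as $k\to\infty$. On $J$ the limit of the left-hand side exceeds the limit of the right-hand side by a strictly positive margin. Uniform convergence of put functions on the compact set $J$ then transfers the strict inequality to all large $k$.

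\emph{Step 1: identify the limits.} By \eqref{eq:standing-marginals} and \eqref{eq:nuRAk-conv}, together with Lemma~\ref{lem:put}, the left-hand side converges uniformly on $\R$ to
\[
\Phi:=\varepsilon\,(P_\nu-P_\mu)+(1-\varepsilon)\,(P_{\nu^{R,\alpha}}-P_\mu).
\]
For the right-hand side, \eqref{eq:tildepi-conv} and $\theta_k\downarrow0$ give $W_1$-convergence of the marginals. Indeed, $\mathcal{AW}_1$ dominates the $W_1$ distance between first marginals and between second marginals. Hence $\hat\mu^k\to\mu|_K$ and $\tilde\nu^k\to\breve\nu^{R,\alpha}$ in $W_1$, for finite measures whose common masses $(1-\theta_k)\mu(K)$ converge. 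I would not invoke Lemma~\ref{lem:put} verbatim for sub-probabilities. Instead I would use the elementary bound
\[
|P_\eta(x)-P_\chi(x)|\le W_1(\eta,\chi)
\]
for equal masses, plus a correction of order $|m-m'|\,(|x|+\text{first moments})$ for masses $m,m'$. On the bounded set $J$ this correction is uniformly small. So the right-hand side converges uniformly on $J$ to
\[
\Psi:=(1-2\varepsilon)\,(P_{\breve\nu^{R,\alpha}}-P_{\mu|_K}).
\]

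\emph{Step 2: the limiting comparison $\Phi-\Psi\ge\varepsilon\,(P_\nu-P_\mu)$.} Split $\pi^{R,\alpha}=\pi^{R,\alpha}|_{K\times\R}+\pi^{R,\alpha}|_{K^c\times\R}$. Both pieces are supermartingale sub-couplings, so each first marginal is $\cvd$ its second marginal. Additivity of put functions then gives
\[
P_{\nu^{R,\alpha}}-P_\mu=(P_{\breve\nu^{R,\alpha}}-P_{\mu|_K})+(\text{a nonnegative term}).
\]
Using $P_{\breve\nu^{R,\alpha}}-P_{\mu|_K}\ge0$ and $1-\varepsilon\ge1-2\varepsilon$, we get $(1-\varepsilon)(P_{\nu^{R,\alpha}}-P_\mu)\ge\Psi$. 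This yields $\Phi-\Psi\ge\varepsilon(P_\nu-P_\mu)$ on $\R$.

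\emph{Step 3: strict positivity on $J$ (the key point).} By \eqref{eq:strict-standing}, $P_\nu-P_\mu>0$ on $I$. The function is continuous and $J\Subset I$ is compact, so $c:=\min_J(P_\nu-P_\mu)>0$. Therefore $\Phi-\Psi\ge\varepsilon c$ on $J$. Choose $k$ large enough that both uniform errors from Step 1 are below $\varepsilon c/3$ on $J$; then \eqref{eq:residual-put-goal} holds on $J$.

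The main obstacle I expect is the bookkeeping in Step 1. One must pass from $\mathcal{AW}_1$-convergence of the sub-couplings $\tilde\pi^k$ to uniform convergence of put functions on $J$, for measures with slightly different masses $(1-\theta_k)$. This is exactly why compactness of $J$ is needed rather than working on all of $\R$. The structural inequality in Step 2 is the conceptual heart, but it is straightforward once $\pi^{R,\alpha}$ is decomposed along $K$ and $K^c$.
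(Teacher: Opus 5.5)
Your proposal is correct and follows essentially the same route as the paper. The paper writes the limiting gap $G$ as $\varepsilon(P_\nu-P_\mu)$ plus the two nonnegative terms obtained by splitting $\pi^{R,\alpha}$ along $K$ and $K^c$, takes $\tau=\inf_J G>0$, and concludes by uniform convergence of the put functions on $J$. You instead bound the gap below by $\varepsilon\min_J(P_\nu-P_\mu)$ and treat the $(1-\theta_k)$ mass mismatch explicitly, a detail the paper only asserts.
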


\begin{proof}
Let $\breve\nu^{R,\alpha}$ be defined by \eqref{eq:breve-nu-def}.  Consider the limiting
gap
\begin{align*}
G(x)
&:=P_{\varepsilon\nu+(1-\varepsilon)\nu^{R,\alpha}}(x)-P_\mu(x)
   -P_{(1-2\varepsilon)\breve\nu^{R,\alpha}}(x)
   +P_{(1-2\varepsilon)\mu|_K}(x)\\
&=\varepsilon(P_\nu-P_\mu)(x)
  +\varepsilon(P_{\breve\nu^{R,\alpha}}-P_{\mu|_K})(x) +(1-\varepsilon)
   \big(P_{\nu^{R,\alpha}-\breve\nu^{R,\alpha}}-P_{\mu|_{K^c}}\big)(x).
\end{align*}
The last two terms are nonnegative because the restrictions of $\pi^{R,\alpha}$ to
$K\times\R$ and $K^c\times\R$ are supermartingale couplings.  Since
$J\Subset I$ and $P_\nu-P_\mu>0$ on $I$,
\[
\tau:=\inf_{x\in J}G(x)>0.
\]
By \eqref{eq:nuRAk-conv}, \eqref{eq:tildepi-conv}, and the convergence of the first
marginals, the corresponding finite-measure put functions converge uniformly on $J$.
Therefore, for all large $k$, the gap in \eqref{eq:residual-put-goal} is at least
$\tau/2$ on $J$.
\end{proof}

\begin{proposition}\label{prop:potentCompare}
For all sufficiently large $k$,
\begin{equation}\label{eq:tail-comparison-strong}
P_{\nu^{R,\alpha,k}}-P_{\mu^k}
\ge
P_{(1-2\varepsilon)\tilde\nu^k}-P_{(1-2\varepsilon)\hat\mu^k}
\quad\text{on }J^c.
\end{equation}
Consequently, \eqref{eq:residual-put-goal} holds on $J^c$.
\end{proposition}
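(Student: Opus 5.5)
The idea is to use that the corrected sub-coupling $\tilde\pi^k$ lives entirely inside $J$: both its first marginal and its second marginal are supported in $J$. On $J^c$ every put function appearing on the right-hand side of \eqref{eq:tail-comparison-strong} is then either identically zero or affine. I would treat the two half-lines to the left of $\tilde\ell$ and to the right of $\tilde\rho$ separately.

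\emph{Support facts and the left half-line.} First I record where things live. By construction $\hat\mu^k$ is concentrated on $\widetilde K\subset J$. The kernels $\hat\pi^k_x$ are concentrated on $L^\circ\subset J$, and $\nu^k_-$ is concentrated on $\widetilde L_-\subset J$ by \eqref{eq:J-def}--\eqref{eq:left-free-interval-def}. Hence every $\tilde\pi^k_x$ is supported in $J$, and so is $\tilde\nu^k$. For $x\le\tilde\ell$ this gives $P_{\tilde\nu^k}(x)=P_{\hat\mu^k}(x)=0$, so the right-hand side of \eqref{eq:tail-comparison-strong} vanishes. The left-hand side is nonnegative because $\mu^k\cvd\nu^{R,\alpha,k}$ by \eqref{eq:nuRAk-orders}.

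\emph{The right half-line.} For $x\ge\tilde\rho$, both measures on the right are supported to the left of $x$, so $P_\eta(x)=\eta(\R)x-\bar\eta$. Since $\tilde\nu^k$ and $\hat\mu^k$ have equal mass, the right-hand side equals the constant
\[
(1-2\varepsilon)\big(\bar{\hat\mu}^k-\bar{\tilde\nu}^k\big)
=(1-2\varepsilon)\int\Big(x-\int y\,\tilde\pi^k_x(dy)\Big)\hat\mu^k(dx)=:(1-2\varepsilon)\Delta_k .
\]
This constant is nonnegative by \eqref{eq:tildepi-supermart}. Using \eqref{eq:tildepi-conv}, the uniform integrability supplied by compact supports, and $\theta_k\to0$, I get
\[
\Delta_k\to\int_K\big(x-\alpha m_x-(1-\alpha)x\big)\mu(dx)=\alpha\Delta_K,
\qquad \Delta_K:=\int_K(x-m_x)\,\mu(dx).
\]
For the left-hand side, Lemma~\ref{lem:put} and \eqref{eq:nuRAk-conv} give uniform convergence of $P_{\nu^{R,\alpha,k}}-P_{\mu^k}$ on $\R$ to $P_{\nu^{R,\alpha}}-P_\mu$. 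I split the limit $P_{\nu^{R,\alpha}}-P_\mu$ along $K$ and $K^c$:
\begin{itemize}
\item The $K^c$-part $P_{\nu^{R,\alpha}-\breve\nu^{R,\alpha}}-P_{\mu|_{K^c}}$ is $\ge0$, since $\pi^{R,\alpha}|_{K^c\times\R}$ is a supermartingale coupling.
\item The $K$-part has both marginals supported in $J$ by Proposition~\ref{prop:pi_R_alpha}(i). So on $[\tilde\rho,\infty)$ it equals exactly $\alpha\Delta_K$.
\end{itemize}
Hence $\inf_{x\ge\tilde\rho}(P_{\nu^{R,\alpha}}-P_\mu)(x)\ge\alpha\Delta_K$. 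This exceeds the limit $(1-2\varepsilon)\alpha\Delta_K$ of the right-hand side by $2\varepsilon\alpha\Delta_K$. Uniform convergence then yields \eqref{eq:tail-comparison-strong} on $[\tilde\rho,\infty)$ for all large $k$.

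\emph{Main obstacle.} This step needs $\Delta_K>0$. From \eqref{eq:K-choice} one gets $\Delta_K\ge\Delta-\varepsilon$ roughly, so it suffices to take $\varepsilon<\Delta/2$ from the start, which is harmless since $\varepsilon\downarrow0$ at the end. Failing that, I would enlarge $K$ so that $\Delta_K>0$. The degenerate case $\Delta_K=0$ is in any event harmless: then $\Delta_k\to0$, and I would instead compare using the exact identity above together with the nonnegativity of the $K^c$-part, keeping track of $o(1)$ terms. I expect handling this strict margin carefully, uniformly in $x$, to be the only delicate point.

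\emph{The consequence.} Since $\nu^{R,\alpha,k}\cvd\nu^k$, we have $P_{\nu^k}\ge P_{\nu^{R,\alpha,k}}$. Therefore
\[
P_{\varepsilon\nu^k+(1-\varepsilon)\nu^{R,\alpha,k}}-P_{\mu^k}
=\varepsilon(P_{\nu^k}-P_{\nu^{R,\alpha,k}})+P_{\nu^{R,\alpha,k}}-P_{\mu^k}
\ge P_{\nu^{R,\alpha,k}}-P_{\mu^k}.
\]
Combined with \eqref{eq:tail-comparison-strong}, this gives \eqref{eq:residual-put-goal} on $J^c$.
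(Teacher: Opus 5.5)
Your proposal is correct. It shares the paper's skeleton: the comparison is trivial for $x\le\tilde\ell$, and for $x\ge\tilde\rho$ the right-hand side collapses to the constant $(1-2\varepsilon)\hat\Delta^k$. Where you differ is in how you get the strict margin on the right half-line.

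\emph{The paper's route.} It uses only the total defect. Since $D^{R,\alpha}(x)\to\Delta^{R,\alpha}=\alpha\Delta>0$ as $x\to+\infty$, it enlarges $\tilde\rho$ so that $D^k\ge(1-\varepsilon/2)\Delta^k$ on $[\tilde\rho,\infty)$. It then compares with $(1-2\varepsilon)\hat\Delta^k\to(1-2\varepsilon)\Delta^{R,\alpha}_K\le(1-2\varepsilon)\Delta^{R,\alpha}$. This needs no condition on $\varepsilon$, but it retroactively changes $J$. That is harmless, because Proposition~\ref{prop:reduceTOjc} and the barycentre-correction bound only use that $J\Subset I$ is a compact interval containing the relevant supports.

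\emph{Your route.} You split $D^{R,\alpha}$ into its $K$- and $K^c$-parts. Both marginals of the $K$-part live in $J$, so that part equals exactly $\alpha\Delta_K$ on $[\tilde\rho,\infty)$, while the $K^c$-part is nonnegative. This yields $D^{R,\alpha}\ge\alpha\Delta_K$ for the $J$ already fixed, with no enlargement of $\tilde\rho$. The price is that you need $\Delta_K>0$. This is secured by \eqref{eq:K-choice}: since $0\le x-m_x\le|x|+\int|y|\,\pi_x(dy)$, you get $\Delta_K\ge\Delta-\varepsilon$. So restricting to $\varepsilon<\Delta$ suffices, which is harmless because $\varepsilon\downarrow0$ in the diagonal step anyway.

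\emph{The degenerate case.} Your fallback remark about $\Delta_K=0$ is not justified as written. In that case you have no positive lower bound for the $K^c$-part on all of $[\tilde\rho,\infty)$, only for large $x$, which would push you back to the paper's enlargement of $\tilde\rho$. It is not needed, though, given the choice $\varepsilon<\Delta$.
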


\begin{proof}
Set
\[
D^{R,\alpha}(x):=P_{\nu^{R,\alpha}}(x)-P_\mu(x),
\qquad
D^k(x):=P_{\nu^{R,\alpha,k}}(x)-P_{\mu^k}(x),
\]
and
\[
\Delta^k:=\Delta(\mu^k,\nu^{R,\alpha,k}).
\]
By \eqref{eq:nuRAk-orders}, $D^k\ge0$.  Moreover, by \eqref{eq:RA-defect},
$\Delta^k\to\Delta^{R,\alpha}>0$, and
\[
D^{R,\alpha}(x)\longrightarrow\Delta^{R,\alpha}
\quad\text{as }x\to+\infty.
\]
Increasing $\tilde\rho$ in \eqref{eq:J-def} if necessary, and using the uniform convergence
$D^k\to D^{R,\alpha}$, we may assume that, for all large $k$,
\begin{equation}\label{eq:Dk-tail-lower}
D^k(x)
\ge
\left(1-\frac{\varepsilon}{2}\right)\Delta^k,
\qquad x\ge\tilde\rho.
\end{equation}
On the other hand, \eqref{eq:tildepi-conv} gives
\[
\hat\Delta^k:=\Delta(\hat\mu^k,\tilde\nu^k)
\longrightarrow
\Delta_K^{R,\alpha}:=
\Delta(\mu|_K,\breve\nu^{R,\alpha})\le \Delta^{R,\alpha}.
\]
Since $\Delta^{R,\alpha}>0$ and $\varepsilon>0$, it follows that, for all large $k$,
\begin{equation}\label{eq:global-local-defect}
\left(1-\frac{\varepsilon}{2}\right)\Delta^k
\ge
(1-2\varepsilon)\hat\Delta^k.
\end{equation}
If $x\le\tilde\ell$, then $\hat\mu^k$ and $\tilde\nu^k$ are supported in $J$, so the right
side of \eqref{eq:tail-comparison-strong} is zero, while $D^k(x)\ge0$.  If
$x\ge\tilde\rho$, then the same support property gives
\[
P_{(1-2\varepsilon)\tilde\nu^k}(x)-P_{(1-2\varepsilon)\hat\mu^k}(x)
=(1-2\varepsilon)\hat\Delta^k.
\]
Combining this identity with \eqref{eq:Dk-tail-lower} and
\eqref{eq:global-local-defect} proves \eqref{eq:tail-comparison-strong}.
\end{proof}

By Propositions~\ref{prop:reduceTOjc} and \ref{prop:potentCompare},
\eqref{eq:rem-cvd} holds.  Strassen's theorem gives
\[
\eta^k\in\Pi_S(\mu_{\rm rem}^k,\nu_{\rm rem}^k).
\]
Define
\begin{equation}\label{eq:barpi-def}
\bar\pi^{k,\varepsilon}:=(1-2\varepsilon)\tilde\pi^k+\eta^k.
\end{equation}
Then
\begin{equation}\label{eq:barpi-marginals}
\bar\pi^{k,\varepsilon}
\in
\Pi_S\big(\mu^k,\lambda^{k,\varepsilon}\big),
\qquad
\lambda^{k,\varepsilon}:=
\varepsilon\nu^k+(1-\varepsilon)\nu^{R,\alpha,k}.
\end{equation}
Furthermore, by \eqref{eq:K-choice}, \eqref{eq:RA-close}, \eqref{eq:tildepi-conv}, and
\cite[Lemma~3.6(b)]{BJMP1}, there is a deterministic modulus $\omega(\varepsilon)$,
with $\omega(\varepsilon)\downarrow0$ as $\varepsilon\downarrow0$, such that
\begin{equation}\label{eq:barpi-close}
\limsup_{k\to\infty}
\mathcal{AW}_1(\bar\pi^{k,\varepsilon},\pi)
\le
\omega(\varepsilon).
\end{equation}
Also, by \eqref{eq:nuRAk-conv} and the choice of $R,\alpha$,
\begin{equation}\label{eq:lambda-close}
\limsup_{k\to\infty}W_1(\lambda^{k,\varepsilon},\nu)
\le
\omega(\varepsilon).
\end{equation}
Finally, \eqref{eq:nuRAk-orders} implies
\begin{equation}\label{eq:lambda-order}
\lambda^{k,\varepsilon}\cvd\nu^k
\quad\text{for all }k.
\end{equation}

\subsection{Final adjustment of the second marginal}\label{subsec:final-adjust}

The last step changes the second marginal from $\lambda^{k,\varepsilon}$ to $\nu^k$.  The
important point is that this must be done by a supermartingale coupling, not by a
martingale coupling, since in general
$\lambda^{k,\varepsilon}$ is only below $\nu^k$ in decreasing-convex order.

Choose a sequence $\varepsilon_j\downarrow0$.  By \eqref{eq:barpi-close} and
\eqref{eq:lambda-close}, we can choose integers
$1\le N_1<N_2<\cdots$ such that, for every $j$ and every $k\ge N_j$,
\begin{equation}\label{eq:diagonal-choice}
\mathcal{AW}_1(\bar\pi^{k,\varepsilon_j},\pi)
\le
\omega(\varepsilon_j)+\varepsilon_j,
\qquad
W_1(\lambda^{k,\varepsilon_j},\nu)
\le
\omega(\varepsilon_j)+\varepsilon_j.
\end{equation}
For $N_j\le k<N_{j+1}$, set
\[
\dot\pi^k:=\bar\pi^{k,\varepsilon_j},
\qquad
\lambda^k:=\lambda^{k,\varepsilon_j}.
\]
For the finitely many $k<N_1$, choose arbitrary admissible couplings.  Then
\begin{equation}\label{eq:dotpi-lambda-conv}
\dot\pi^k\in\Pi_S(\mu^k,\lambda^k),
\qquad
\mathcal{AW}_1(\dot\pi^k,
\pi)\longrightarrow0,
\qquad
W_1(\lambda^k,
\nu)\longrightarrow0.
\end{equation}
Moreover, $\lambda^k\cvd\nu^k$ by \eqref{eq:lambda-order}.  Hence Strassen's theorem
provides a supermartingale coupling
\[
M^k\in\Pi_S(\lambda^k,\nu^k).
\]
Since both marginals of $M^k$ converge to $\nu$ in $W_1$, Lemma~\ref{lem:diagonal-component}
applied to $M^k$ yields
\begin{equation}\label{eq:final-kernel-diagonal}
\int_{\R^2}|z-y|\,M^k(dz,dy)
\longrightarrow0.
\end{equation}
Disintegrate $M^k(dz,dy)=\lambda^k(dz)M_z^k(dy)$ and define
\begin{equation}\label{eq:final-pik-def}
\pi^k(dx,dy):=
\int_{\R}\dot\pi^k(dx,dz)M_z^k(dy).
\end{equation}
Then $\pi^k$ has marginals $\mu^k$ and $\nu^k$.  It is a supermartingale coupling because
\[
\int y\,M_z^k(dy)\le z
\quad\text{and}\quad
\int z\,\dot\pi_x^k(dz)\le x.
\]
Finally,
\[
\mathcal{AW}_1(\pi^k,\dot\pi^k)
\le
\int_{\R^2}|z-y|\,M^k(dz,dy)
\longrightarrow0.
\]
Together with \eqref{eq:dotpi-lambda-conv}, this gives
\[
\mathcal{AW}_1(\pi^k,\pi)\longrightarrow0.
\]
This proves Theorem~\ref{thm:approx} in the strict irreducible case, and therefore, by
the reduction above, in full generality.

\section{Proof of Theorem \ref{thm:monotonicity}: monotonicity principle} \label{sec:monoto_proof}

We now proceed to prove the monotonicity principle for (WSOT). We will first use the stability result to prove the sufficiency side, and then use an adapted version of an abstract result in \cite{BeiglbockPlenty} to prove the necessity side.

\subsection{Sufficiency}

In order to prove the sufficiency, we first argue the following sufficiency for finite optimality.

\begin{lemma} \label{lemma:fini_opt}
  If $\pi \in \Pi_S(\mu, \nu)$ is a finitely supported coupling of the form
    $\frac{1}{N} \sum_{i=1}^N \delta_{(x_i)}(dx) p_i (dy)$  for  $x_1 < \cdots < x_n \in \R$   and  $p_1, \cdots, p_n \in \mathcal{P}_1(\R)$, and is supermartingale $C$-monotone w.r.t. some $\Gamma$, then it is optimal among all $\pi'\in \Pi_S(\mu,\nu)$ with the same form.
\end{lemma}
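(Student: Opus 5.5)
The plan is to compare $\pi$ directly with an arbitrary competitor of the same form and to check that the competitor's kernels are admissible test measures in Definition~\ref{def:supMart_mono}. Fix $\pi' = \frac1N\sum_{i=1}^N \delta_{x_i}(dx)\,q_i(dy) \in \Pi_S(\mu,\nu)$. Here the atoms $x_1<\dots<x_N$ are the same as for $\pi$: the first marginal is $\mu=\frac1N\sum_i\delta_{x_i}$ in both cases. If some $x_i$ coincide, we merge the equal atoms and average their kernels. By convexity of $C$ in the second argument this does not increase the cost of $\pi'$. Equality of the second marginals gives $\sum_i p_i = N\nu = \sum_i q_i$, which is the mass-balance condition of the definition. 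The costs are
\[
\int C(x,\pi_x)\,\mu(dx)=\frac1N\sum_{i=1}^N C(x_i,p_i),\qquad \int C(x,\pi'_x)\,\mu(dx)=\frac1N\sum_{i=1}^N C(x_i,q_i).
\]
So once the barycentre conditions on the index sets $\mathcal I_0,\mathcal I_1$ are verified for the family $(q_i)$, monotonicity gives $\sum_i C(x_i,p_i)\le\sum_i C(x_i,q_i)$, which is optimality.

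The barycentre conditions come from the irreducible decomposition (Lemma~\ref{lem:irreducible} and Section~\ref{subsec:reduction-irreducible}). Any supermartingale coupling of $(\mu,\nu)$ splits along the components $I_n$. On the diagonal part $I_{-1}$ the coupling is the identity, so $q_i=p_i=\delta_{x_i}$ there and both conditions are trivial. On each component $I_n$ with $n\ge1$, which lies to the left of $x^*$, every supermartingale coupling is automatically a martingale coupling, because $D=P_\nu-P_\mu$ vanishes at both endpoints of $I_n$. Hence $\int y\,q_i(dy)=x_i=\int y\,p_i(dy)$ for every $x_i\in M_1$, and these indices form exactly $\mathcal I_1$; this matches Remark~\ref{rmk: fini-opt}, which says $M_1$ depends only on the marginals.

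The main obstacle is $\mathcal I_0$, which consists of the atoms in the strict component $I_0=(x^*,\infty)$ where $\pi$ is a martingale. There the definition requires $\int y\,p_i\ge\int y\,q_i$. For such an atom $\int y\,p_i(dy)=x_i$, while $\pi'$ is a supermartingale coupling, so $\int y\,q_i(dy)\le x_i$. The inequality therefore holds at once. The care is needed in reading Definition~\ref{def:supMart_mono} correctly: $M_0$ must be the set of points of $I_0$ where $\pi$ is a martingale, so that $\int y\,p_i=x_i$ is available for $i\in\mathcal I_0$. Indices in $I_0$ where $\pi$ is strictly supermartingale carry no barycentre constraint on $q_i$. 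A swap there is admissible only if the monotonicity inequality is taken to quantify over all $q$'s with equal total sum. I would therefore read the definition as imposing no constraint for indices outside $\mathcal I_0\cup\mathcal I_1$.

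If one insists on an explicit constraint for those indices, a fallback is available. Every competitor satisfies $\int y\,q_i\le x_i$. The barycentre gaps $x_i-\int y\,p_i(dy)>0$ cover the extra freedom, and any redistribution among such indices is allowed by the definition as written. With all constraints checked, summing the monotonicity inequality and dividing by $N$ gives $\int C(x,\pi_x)\,d\mu\le\int C(x,\pi'_x)\,d\mu$, which proves the lemma.
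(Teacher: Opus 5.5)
Your argument is correct and is essentially the paper's proof. You take $\pi'=\frac1N\sum_i\delta_{x_i}q_i\in\Pi_S(\mu,\nu)$ and note $\sum_i p_i=\sum_i q_i$. On $M_1$ you get $\int y\,q_i(dy)=x_i=\int y\,p_i(dy)$, because every supermartingale coupling is a martingale coupling to the left of $x^*$. On $M_0$ you get $\int y\,p_i(dy)=x_i\ge\int y\,q_i(dy)$, and then you apply Definition~\ref{def:supMart_mono}. Your identification of $M_1$ as the marginal-determined region left of $x^*$, and of $M_0$ as the martingale points of $\pi$ in $(x^*,\infty)$, is the one the paper's proof implicitly uses.

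Two asides are unnecessary. The definition places no barycentre constraint on indices outside $\mathcal I_0\cup\mathcal I_1$, so your final ``fallback'' paragraph is not needed, and it would not in fact rescue a stricter reading. The atom-merging remark is moot because the $x_i$ are distinct by hypothesis. Even if they were not, the fix would be to replace the $q_i$ by the merged kernels, since convexity of $C$ gives the inequality in the wrong direction for this comparison.
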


\begin{proof}
Let  any martingale coupling $\pi' \in \Pi_S(\mu, \nu)$ of the form
 $\frac{1}{N} \sum_{i=1}^N \delta_{(x_i)}(dx) q_i (dy)$  for  $x_1 < \cdots < x_n \in \R$   and  $q_1, \cdots, q_n \in \mathcal{P}_1(\R)$,
    and such that $\sum_{i=1}^N p_i = \sum_{i=1}^N q_i$. Now define $M_0, M_1$ as the martingale region of $\pi$ to the left and right of $x^*$.
    
It is clear that $\forall i \in \{1, \ldots, N\}$, $\int_{\R} y p_i (dy) = x_i = \int_{\R} y q_i (dy)$ for all $x_i \in M_1$ and $\forall i \in \{1, \ldots, N\}$, $\int_{\R} y p_i (dy) = x_i \geq \int_{\R} y q_i (dy)$ for all $x_i \in M_0$. By definition of supermartingale $C$-monotonicity, we get
    \[
    \int_{\R \times \R} C(x, \pi_x)  \mu(dx ) = \frac{1}{N} \sum_{i=1}^N C(x_i, p_i) \leq \frac{1}{N} \sum_{i=1}^N C(x_i, q_i) = \int_{\R \times \R} C(x, \pi'_x)  \mu(dx ).
    \]
\end{proof}

\begin{proposition} \label{prop:sufficiency} (Sufficiency)
 Let $r \geq 1$ and $\mu, \nu \in \Pc_r(\R)$ be in convex decreasing order, and $C:\R\times\sP_r(\R) \to \R$ be a measurable cost function, continuous in the second argument and such that there exists a finite constant $K$ which satisfies, for all $(x,m)\in\R\times\sP_r(\R)$,
$$
\lvert C(x,m)\lvert\leq K\left(1+\lvert x\lvert^r+\int_\R\lvert y\lvert^rm(dy)\right).
$$
%Fix $\mu,\nu\in\sP_r(\R)$
%  Let $\mu \in \Pc_f(\R)$, $\nu \in \Pc_g(\R)$ be in convex decreasing order, and $C:\R \times\Pc(\R) \to \R$ be a measurable cost %function, continuous in the second argument and such that there exists a finite constant $K$ which satisfies
%$$
%\forall (x,p) \in \R \times \Pc_g(\R), \ C(x,p) \leq K \left( f(x)  + \int_{\R} g(y)  p (dy) \right).
%$$
Let  $\pi \in \Pi_S(\mu, \nu)$ be supermartingale $C$-monotone w.r.t. some $\Gamma$. Then $\pi$ is optimal for (WSOT).

\end{proposition}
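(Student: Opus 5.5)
The plan follows the martingale sufficiency proof of \cite{BJMP}: reduce to finitely supported first marginals, where Lemma~\ref{lemma:fini_opt} applies, and pass to the limit with the stability machinery of Theorem~\ref{thm:approx} and Proposition~\ref{prop:lsc}. Fix $\pi\in\Pi_S(\mu,\nu)$ that is supermartingale $C$-monotone with respect to $\Gamma$, and fix any competitor $\pi'\in\Pi_S(\mu,\nu)$. We need $\int C(x,\pi_x)\mu(dx)\le\int C(x,\pi'_x)\mu(dx)$.

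\textbf{Step 1 (discretisation).} Take i.i.d.\ samples $X_1,X_2,\dots\sim\mu$ with $(X_i,\pi_{X_i})\in\Gamma$ almost surely. Set $\mu^N:=\frac1N\sum_{i\le N}\delta_{X_i}$, $\pi^N:=\frac1N\sum_{i\le N}\delta_{X_i}\pi_{X_i}$, and let $\nu^N$ be the second marginal of $\pi^N$. By the law of large numbers in $W_r$ (the map $x\mapsto \pi_x$ into $\Pc_r$ is integrable), almost surely $\mu^N\to\mu$ and $\nu^N\to\nu$ in $W_r$. Moreover $J(\pi^N)\to J(\pi)$ in $W_r$, i.e.\ $\mathcal{AW}_r(\pi^N,\pi)\to0$. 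Since $\pi^N\in\Pi_S(\mu^N,\nu^N)$, we have $\mu^N\cvd\nu^N$. By construction $\pi^N$ is still supermartingale $C$-monotone with respect to the same $\Gamma$, keeping the sets $M_0,M_1$ attached to $\pi$.

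\textbf{Step 2 (finite optimality).} Apply Theorem~\ref{thm:approx} to the sequence $(\mu^N,\nu^N)\to(\mu,\nu)$ and the coupling $\pi'$. This gives $\pi'^N\in\Pi_S(\mu^N,\nu^N)$ with $\mathcal{AW}_r(\pi'^N,\pi)\to0$. Every coupling with first marginal $\mu^N$ has the form $\frac1N\sum\delta_{X_i}q_i$, so Lemma~\ref{lemma:fini_opt} should yield
\[
\int C(x,\pi^N_x)\,\mu^N(dx)\le\int C(x,\pi'^N_x)\,\mu^N(dx).
\]
This is the delicate point. Lemma~\ref{lemma:fini_opt} compares only against competitors $q_i$ that respect the barycentre constraints: equality for $X_i\in M_1$, and $\int y\,q_i\le\int y\,p_i$ for $X_i\in M_0$. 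The approximants from Theorem~\ref{thm:approx} need not satisfy these exactly. I would try to modify the construction so that, on the martingale components of $\pi$ (the indices in $M_1$, and the martingale part in $M_0$), the kernels of $\pi'^N$ are martingale kernels. In the reduction of Section~\ref{subsec:reduction-irreducible} these components are handled by the martingale theorem \cite[Theorem~2.6]{BJMP1}, and since $\mu^N\le$-dominates pieces of the same atoms, one can glue componentwise.

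The remaining mismatch is that for $X_i\in M_0$ we need $\int y\,q_i\le\int y\,p_i=X_i$. Here $\int y\,p_i=X_i$ holds exactly, because $M_0$ is the martingale region of $\pi$, so the supermartingale property of $\pi'^N$ already gives what is needed. I expect the main obstacle to be this bookkeeping: showing that the barycentre constraints of Definition~\ref{def:supMart_mono} are met by the approximating competitors. If they are met only approximately, I would use continuity of $C$ to absorb the error, perturbing $q_i$ by a vanishing barycentre shift (as in the barycentre correction of Section~\ref{subsec:barycentre-correction}). The cost of that shift tends to $0$ by the growth bound and dominated convergence.

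\textbf{Step 3 (passage to the limit).} Continuity of $C$ in the second argument and the growth bound $|C(x,m)|\le K(1+|x|^r+\int|y|^r m(dy))$ make $J\mapsto\int C(x,p)\,J(dx,dp)$ continuous along $W_r$-convergent sequences whose first marginals are the empirical measures of a fixed sample. This is the Portmanteau-type argument of \cite[Section A.2]{BJMP}, used exactly as in the proof of Proposition~\ref{prop:usc}. Hence the left side converges to $\int C(x,\pi_x)\mu(dx)$ and the right side to $\int C(x,\pi'_x)\mu(dx)$. Since $\pi'$ was arbitrary, $\pi$ is optimal.
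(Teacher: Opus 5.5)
Your overall route is the paper's Step 1. You discretise along $\Gamma$, apply Lemma~\ref{lemma:fini_opt} on the empirical marginals, and pass to the limit using Theorem~\ref{thm:approx}, i.e.\ the upper-semicontinuity half of Theorem~\ref{thm:stability} (which, usefully, needs no convexity of $C$). The genuine gap is in Step 3. The Portmanteau argument behind Proposition~\ref{prop:usc} requires either (A') upper semicontinuity of $C$ in both arguments or (B') strong convergence of the first marginals. Neither is available here. $C$ is only measurable in $x$, and empirical measures do not converge strongly as soon as $\mu$ has a diffuse part: for atomless $\mu$, the $\mu$-null set $\{X_1,X_2,\dots\}$ has $\mu^N$-mass $1$ for every $N$. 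The left-hand side is harmless, since it equals $\frac1N\sum_{i\le N}C(X_i,\pi_{X_i})$ and the strong law applies. For $\limsup_N\int C(x,\pi'^N_x)\,\mu^N(dx)\le\int C(x,\pi'_x)\,\mu(dx)$, however, the kernel $\pi'^N_{X_i}$ is only close to $\pi'_{x'}$ for $x'$ \emph{near} $X_i$ under the optimal $\mathcal{AW}_r$-coupling, and a cost discontinuous in $x$ can separate the two. This is exactly why the paper argues in two steps. First it treats $C$ continuous on $X\times\Pc_r$ with $X$ Polish and $\mu(X)=1$, so that (A'') applies. Then it removes this assumption by a Lusin-type argument as in \cite[Theorem~3.3]{BJMP}. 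Your direct route can be repaired with the same ingredient. Choose compacts $K_m$ with $\mu(K_m^c)<1/m$ and $C|_{K_m\times\Pc_r}$ continuous (Scorza--Dragoni), and use that a.s.\ $\mu^N(K_m^c)\to\mu(K_m^c)$ for all $m$ simultaneously. The remainder is controlled by tightness of $J(\pi')$ and the uniform integrability given by the growth bound. As written, though, this step is missing.

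In Step 2 the ``main obstacle'' is not real, but the fallback you suggest would not be admissible. Shifting the barycentre of an individual $q_i$ changes $\sum_i q_i$, so the shifted family is no longer a competitor. In fact no modification of $\pi'^N$ is needed. Take $x^*$ finite and write $m_x=\int y\,\pi_x(dy)\le x$. Since $P_\nu(x^*)=P_\mu(x^*)$, equality holds $\mu$-a.e.\ in $\int(x^*-y)^+\pi_x(dy)\ge(x^*-m_x)^+\ge(x^*-x)^+$. Hence, $\mu$-a.e., $\pi_x$ has barycentre $x$ when $x\le x^*$ and is concentrated on the same side of $x^*$ as $x$. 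It follows that a.s.\ $P_{\nu^N}(x^*)=P_{\mu^N}(x^*)$. The same equality case, applied to an arbitrary $\pi'\in\Pi_S(\mu^N,\nu^N)$, forces $\int y\,\pi'_{X_i}(dy)=X_i$ at every sample point $X_i\le x^*$, and this covers $M_1$. On $M_0$ the supermartingale property suffices, as you noted. If $x^*=+\infty$, then $\bar\mu=\bar\nu$, so $\bar\mu^N=\bar\nu^N$ a.s.\ and every element of $\Pi_S(\mu^N,\nu^N)$ is a martingale; if $x^*=-\infty$, $M_1$ is empty. So Lemma~\ref{lemma:fini_opt} applies directly to the $\pi'^N$ produced by Theorem~\ref{thm:approx}; the paper uses this implicitly. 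Finally, in Step 2 you mean $\mathcal{AW}_r(\pi'^N,\pi')\to0$, not $\mathcal{AW}_r(\pi'^N,\pi)\to0$.
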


\begin{proof}
The argument is similar as \cite[Theorem 3.3]{BJMP}, here we state it in a slightly simpler context.

Step 1. Suppose $\mu$ is concentrated on a Polish subspace $X$ of $\R$, and the restriction $C|_{X \times \sP_r(\R)}$ is continuous. Then combine Lemma \ref{lemma:fini_opt} with Theorem \ref{thm:stability} we get $\pi$ is optimal for $V^C_S(\mu,\nu)$.

Step 2. We lift the requirements of $C$ being continuous. Using similar approximation arguments with Lusin's theorem as \cite[Theorem 3.3]{BJMP}, and replace \cite[Lemma A.6]{BJMP} with \cite[Lemma 5.7]{Carda}, we can draw the conclusion.
\end{proof}

\subsection{Necessity}

In the following, we justify the necessity side of the monotonicity principle. First, we need an abstract version of this theorem similar as \cite{BeiglbockPlenty} with equalities in the linear constraints being replaced by inequalities. Then, we need to apply the abstract theorem in a suitable context to get the monotonicity principle for (WSOT). For completeness we state the main results here in this Appendix. Throughout this section, we denote $X, Y$ to be two Polish spaces.

Let $E$ be a Polish space and $c: E\rightarrow \R$ be Borel measurable. Given a set $\Fc$ of Borel functions on $E$, such that $\Fc= \Fc_1 \cup \Fc_2$, denote $\Pi_{\Fc}$ 
 the set of probability measures $\gamma$ on $E$ for which $\int f d\gamma = 0$ for all $f \in \Fc_1$, and $\int f d\gamma \leq 0$ for all $f \in \Fc_2$. In the following, we consider the constrained optimization problem is defined by:
 $$
 \min_{\gamma \in \Pi_{\Fc}} \int c d \gamma.
 $$
For $\alpha \in \Mc(E)$, $\alpha'\in E$ is a \textit{competitor} of $\alpha$ if $\alpha(E)=\alpha'(E)$, and for all $f \in \Fc_1$, $\int f d \alpha = \int f d \alpha'$; and for all $f \in \Fc_2$, $\int f d \alpha \geq \int f d \alpha'$.
In addition, if $\alpha$ is finitely supported, its competitor should also be so.
A set $\Gamma \subset E$ is called $c$-monotone (or finitely minimal) if each measure $\alpha$, which is finite and concentrated on finitely many points in $\Gamma$, is cost minimizing among its competitors. A measure $\gamma$ is called  $c$-monotone if it is concentrated on a $c$-monotone set.

We first state the adaptation of \cite[Theorem 1.4]{BeiglbockPlenty} into the current text, with the constraint being inequality.

\begin{theorem} \label{thm:monoAbstract}
Suppose there exists a function $g: E \to [0,\infty)$ such that for all $f \in \Fc$, there exists a constant $a_f \in \R_+$ s.t. $|f| \leq a_f g$. In addition, all functions in $\Fc$ are continuous, or $\Fc$ is at most countable. Assume further $\gamma^*$ is the optimizer of the constrained problem:
$$
\min_{\gamma \in \Pi_{\Fc}} \int c d \gamma =  \int c d \gamma^* \in \R.
$$
Then $\gamma^*$ is $c$-monotone.
\end{theorem}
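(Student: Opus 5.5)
We follow the proof of \cite[Theorem~1.4]{BeiglbockPlenty}, replacing equality constraints by inequalities where needed. The argument is by contradiction, combined with a measurable selection. Set
\[
E^{(n)}:=E^n,\qquad \alpha_{\mathbf z}:=\sum_{i=1}^n\delta_{z_i}\quad\text{for }\mathbf z=(z_1,\dots,z_n)\in E^{(n)},
\]
and call $\mathbf z$ \emph{bad} if $\alpha_{\mathbf z}$ has a finitely supported competitor $\alpha'$ with $\int c\,d\alpha'<\int c\,d\alpha_{\mathbf z}$. Let $B_n\subset E^{(n)}$ be the set of bad tuples.

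The first step is to show that $\gamma^*$ is concentrated on a $c$-monotone set. As in \cite{BeiglbockPlenty}, it suffices to show $(\gamma^*)^{\otimes n}(B_n)=0$ for every $n$. One then removes a null set using the Kellerer-type lemma for $n$-fold products (\cite[Proposition~2.1]{BeiglbockPlenty}), which shows that a set $\Gamma$ with $\gamma^*(\Gamma)=1$ and $\Gamma^n\cap B_n=\emptyset$ for all $n$ exists. This part is purely combinatorial and measure-theoretic, and does not depend on whether the constraints are equalities or inequalities.

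The second step, which is the core, is to show $(\gamma^*)^{\otimes n}(B_n)=0$. We argue by contradiction and assume $(\gamma^*)^{\otimes n}(B_n)>0$. The set $B_n$ is analytic, since it is the projection of a Borel set of pairs $(\mathbf z,\alpha')$, with $\alpha'$ ranging over finitely supported measures on a fixed finite number of atoms. By the Jankov–von Neumann uniformization theorem, we pick a universally measurable selection $\mathbf z\mapsto\alpha'_{\mathbf z}$ of strictly better competitors. Using the growth bound $|f|\le a_f g$, we restrict to a subset $B'\subset B_n$ of positive measure on which $\int g\,d\alpha_{\mathbf z}$, $\int g\,d\alpha'_{\mathbf z}$, $\int |c|\,d\alpha_{\mathbf z}$ and $\int |c|\,d\alpha'_{\mathbf z}$ are all bounded. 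Let $\omega:=(\gamma^*)^{\otimes n}|_{B'}$, and define
\[
\gamma^{\circ}:=\gamma^*-\frac{\varepsilon}{n}\int\alpha_{\mathbf z}\,\omega(d\mathbf z)+\frac{\varepsilon}{n}\int\alpha'_{\mathbf z}\,\omega(d\mathbf z).
\]
For $\varepsilon>0$ small, the subtracted measure is dominated by $\varepsilon\gamma^*$, because each marginal of $\omega$ is at most $\gamma^*$. Hence $\gamma^\circ\ge0$, and $\gamma^\circ$ is a probability measure since competitors preserve mass.

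The third step checks admissibility and optimality of $\gamma^\circ$. Integrability of every $f\in\Fc$ follows from the bound on $g$, so Fubini applies. For $f\in\Fc_1$ we have $\int f\,d\gamma^\circ=\int f\,d\gamma^*=0$. For $f\in\Fc_2$,
\[
\int f\,d\gamma^\circ=\int f\,d\gamma^*+\frac{\varepsilon}{n}\int\Big(\int f\,d\alpha'_{\mathbf z}-\int f\,d\alpha_{\mathbf z}\Big)\,\omega(d\mathbf z)\le0,
\]
because a competitor satisfies $\int f\,d\alpha'_{\mathbf z}\le\int f\,d\alpha_{\mathbf z}$. This is exactly where the direction of the inequality in the definition of competitor is used. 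Thus $\gamma^\circ\in\Pi_{\Fc}$, and
\[
\int c\,d\gamma^\circ<\int c\,d\gamma^*,
\]
which contradicts optimality.

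The main obstacle is measurability when $\Fc$ is uncountable. The competitor condition involves all $f\in\Fc$, so we must ensure that $B_n$ is analytic. If $\Fc$ is countable, the condition is a countable intersection of Borel conditions. If every $f\in\Fc$ is continuous, then the set of pairs $(\mathbf z,\alpha')$ satisfying the constraints is closed in the appropriate product topology, since the maps $\alpha'\mapsto\int f\,d\alpha'$ are continuous. This is precisely why the dichotomy appears in the hypotheses, and we would verify it following \cite{BeiglbockPlenty}.
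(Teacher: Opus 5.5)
\textbf{Genuine gap in your first step.} The reduction you rely on does not hold. You claim that $(\gamma^*)^{\otimes n}(B_n)=0$ for all $n$ yields a set $\Gamma$ with $\gamma^*(\Gamma)=1$ and $\Gamma^n\cap B_n=\emptyset$. This is false for general analytic sets. Take $\gamma^*$ to be Lebesgue measure on $[0,1]$ and $B_2$ the diagonal of $[0,1]^2$: it is product-null, yet every nonempty $\Gamma$ meets it.

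This is not an artificial worry in your setting. Tuples with repeated entries, such as $(z_1,z_1,z_2)$ encoding $2\delta_{z_1}+\delta_{z_2}$, are exactly how your unit-weight formalism represents non-uniform weights. Such tuples lie on diagonals, which product measures of a non-atomic $\gamma^*$ do not see.

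Kellerer's lemma, which the paper invokes as \cite[Lemma 4.1]{BeiglbockPlenty}, is a dichotomy. Either $B_n\subset\bigcup_{i=1}^n p_i^{-1}(N_i)$ with $\gamma^*(N_i)=0$, which is what actually produces $\Gamma:=E\setminus\bigcup_n\bigcup_i N_i^{(n)}$. Or there is a measure $\kappa$ on $E^n$ with every marginal $p_i(\kappa)\le\gamma^*$ and $\kappa(B_n)>0$. So what you must exclude is every such $\kappa$, not only the product measure.

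\textbf{The repair.} It is cheap, because your perturbation step only uses that the marginals of $\omega$ are dominated by $\gamma^*$. Run your second and third steps with $\omega:=\kappa|_{B'}$, where $\kappa$ is given by the second alternative. The subtracted measure is then $\frac{\varepsilon}{n}\sum_i p_i(\kappa|_{B'})\le\varepsilon\gamma^*$, and everything else goes through verbatim. This is exactly the paper's route: Kellerer duality applied to $M$, Jankov--von Neumann selection on $\hat M$, and the contradiction via $\gamma'=\gamma^*-\omega+\omega'$.

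\textbf{A smaller point on weights.} The definition of $c$-monotonicity allows arbitrary positive real weights, whereas your tuples $\mathbf z$ only encode integer weights, hence rational ones after scaling. You can fix this in either of two ways:
\begin{itemize}
\item include the weights in the selected data, as the paper does with $\alpha_1(z),\dots,\alpha_l(z)$; or
\item observe that if $\alpha'$ is a strictly better competitor of $\alpha$, then $\alpha'+\beta$ is a strictly better competitor of $\alpha+\beta$ for any finitely supported $\beta\ge0$ carried by the atoms of $\alpha$. This lets you round the weights of $\alpha$ up to rationals and then scale to integers, producing a bad tuple in $\Gamma^n$.
\end{itemize}
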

\begin{proof}
As in \cite[Theorem 1.4]{BeiglbockPlenty}, the main idea is to apply a Kellerer's measure-theoretic duality result \cite[Lemma 4.1]{BeiglbockPlenty}.  Define the set $M$ similar as \cite[Theorem 1.4]{BeiglbockPlenty} with a supermartingale version of $c$-better competitor. Also define $\hat M$ similar but with
$$
\sum \alpha'_i f (z'_i) \leq \sum \alpha_i f (z_i),
$$
instead of the equality. Applying the duality  to $M$: if item (i) in the duality holds, then the proof is done. Otherwise, if item (ii) holds, one try to draw a contradiction.

Indeed, by Jankov-von Neumann measurable selection theorem (see \cite[Proposition 7.49]{BertShreve} ) to the set $\hat{M}$, one can find a mapping
$$
z \mapsto (\alpha_1(z), \cdots, \alpha_l(z), z'_1(z), \cdots, z'_l(z), \alpha'_1(z), \cdots, \alpha'_l(z))
$$
such that
$$
(z, \alpha_1(z), \cdots, \alpha_l(z), z'_1(z), \cdots, z'_l(z), \alpha'_1(z), \cdots, \alpha'_l(z)) \in \hat{M}.
$$
Then argue similarly as \cite[Theorem 1.4]{BeiglbockPlenty}, one can then define the measures $\om$ and $\om'$, and $\om'$ is a $c$-better competitor of $\om$:
$$
\int f d \om' = \int f d \om, \forall f \in \Fc_1, \ \int f d \om' \leq \int f d \om, \forall f \in \Fc_2,  \ \ \ \ \int c d \om' < \int c d \om.
$$
Consequently we can construct another probability measure $\gamma':=\gamma^* - \om + \om' \in \Pi_{\Fc}$,  which is also a contradiction to the optimality of $\gamma^*$.
%{\color{blue} understand the upper bound $l$ in all place? }
%set $\hat{M}$ same; projection $M$ different? no, ``better competitor" contains the inequality
\end{proof}

Remind the definition of $\Lambda_S(\mu,\nu)$ in Section \ref{subsec:alternativeWSOT}.
Expressed in terms of linear constraints, we have $\P \in \Lambda_S(\mu,\nu)$ iff
$$
\int_{\R \times \Pc(\R)} f(x,p) \P(dx, dp) \leq 0, \ \ \forall f \in \Fc^s_2,
$$
where
\[
\mathcal F_2^s
:=
\left\{
f_{\varphi}(x,p)
=
\varphi(x,p)
\left(
\int_{\mathbb R} y\,p(dy)-x
\right)
:
\varphi\in C_b^+\bigl(\mathbb R\times \mathcal P_1(\mathbb R)\bigr)
\right\}.
\]
and
$$
\int_{\R \times \Pc(\R)} f(x,p) \P(dx, dp) = 0, \ \ \forall f \in \Fc^s_1,
$$
where
$$
\Fc^s_1: = \{ \varphi \circ p_X - \int \varphi d \mu: \varphi \in C_b(X) \} \cup \{ \varphi \circ p_Y - \int \varphi d \nu: \varphi \in C_b(X) \}.
$$
Now we are able to provide the necessity part of the monotonicity principle.

\begin{proposition} \label{prop:necessity} (Necessity) 

 Let $r\geq 1$, and $\mu\cvd \nu \in \Pc_r(\R)$ be in convex decreasing order, and $C:\R\times\sP_r(\R) \to \R$  be a measurable cost function, continuous and convex in the second argument and such that there exists a finite constant $K$ which satisfies, for all $(x,m)\in\R\times\sP_r(\R)$,
$$
\lvert C(x,m)\lvert\leq K\left(1+\lvert x\lvert^r+\int_\R\lvert y\lvert^rm(dy)\right).
$$
Let $\pi^* \in \Pi_S(\mu, \nu)$ be a supermartingale coupling which minimises (WSOT), then $\pi^*$ is a Supermartingale $C$-monotone coupling.

\end{proposition}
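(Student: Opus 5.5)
The plan is to recast (WSOT) as a linear program over $\Lambda_S(\mu,\nu)$ and read off $C$-monotonicity from Theorem~\ref{thm:monoAbstract}.

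\textbf{Lifting.} Take $E:=\R\times\Pc_r(\R)$ and the linear cost $c(x,p):=C(x,p)$. Set $\P^*:=\mu(dx)\delta_{\pi^*_x}(dp)$; it belongs to $\Lambda_S(\mu,\nu)$.

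\textbf{Step 1: $\P^*$ is optimal on $\Lambda_S(\mu,\nu)$.} Let $\P\in\Lambda_S(\mu,\nu)$. Its image $\hat I(\P)$, with kernel $\int p\,\P_x(dp)$, lies in $\Pi_S(\mu,\nu)$, since the barycentre constraint is linear in $p$. Convexity of $C$ in the second argument and Jensen give $\int C\,d\P\ge\int C(x,\hat I(\P)_x)\mu(dx)\ge V_S^C(\mu,\nu)=\int c\,d\P^*$. Finiteness of the value follows from the growth bound on $C$.

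\textbf{Step 2: apply Theorem~\ref{thm:monoAbstract}.} The constraint families are $\Fc_1=\Fc^s_1$ and $\Fc_2=\Fc^s_2$, with dominating function $g(x,p):=1+|x|^r+\int|y|^r p(dy)$. Each $f\in\Fc^s_1$ is bounded. Each $f_\varphi\in\Fc^s_2$ satisfies $|f_\varphi|\le\|\varphi\|_\infty\bigl(|x|+\int|y|p(dy)\bigr)\le 2\|\varphi\|_\infty g$.

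Continuity of these functions holds in the $W_r$ topology on $\Pc_r$. Alternatively, we replace each family by a countable dense subfamily, for example $\varphi$ ranging over a countable convergence-determining class, so the countability hypothesis of the theorem applies. With either route, the theorem yields a Borel $c$-monotone set $\Gamma\subset E$ on which $\P^*$ is concentrated, so $(x,\pi^*_x)\in\Gamma$ for $\mu$-a.e.\ $x$.

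\textbf{Step 3: translate competitors (the main obstacle).} Let $(x_i,p_i)\in\Gamma$, $i\le N$, and let $q_i$ satisfy the conditions of Definition~\ref{def:supMart_mono}. Consider $\alpha=\sum\delta_{(x_i,p_i)}$ and $\alpha'=\sum\delta_{(x_i,q_i)}$. The $\Fc^s_1$ constraints match: the first marginals are identical and $\sum p_i=\sum q_i$. The $\Fc^s_2$ inequalities $\int f_\varphi\,d\alpha'\le\int f_\varphi\,d\alpha$ hold termwise whenever $\int y\,q_i\le\int y\,p_i$ for every $i$.

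For $i\in\mathcal I_0$ this is given. For $i\in\mathcal I_1$ we have equality, but $\mathcal I_0\cup\mathcal I_1$ need not cover all indices. This is where the sets $M_0,M_1$ enter:
\begin{itemize}
\item Outside the martingale regions, $\pi^*$ is a genuine supermartingale. We take $M_0$ to be this region, together with the left martingale region, so that only inequalities are demanded there.
\item On $M_1$, the right martingale region, which depends only on the marginals, we demand equality of barycentres.
\end{itemize}
The equality requirement on $M_1$ is harmless: it is stronger than the inequality, so every such family is still a competitor. Hence $\alpha'$ is a competitor of $\alpha$, and $c$-monotonicity of $\Gamma$ gives $\sum C(x_i,p_i)\le\sum C(x_i,q_i)$.

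The delicate point is ensuring that every index falls in $\mathcal I_0\cup\mathcal I_1$. I would address this by intersecting $\Gamma$ with $(M_0\cup M_1)\times\Pc_r$, which still carries full $\P^*$-mass, and by checking consistency with the irreducible decomposition of Lemma~\ref{lem:irreducible}. Finally, finite support of competitors is automatic in this setting.
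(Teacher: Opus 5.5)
Your route is the paper's route: lift by $J$, use Jensen (Proposition~\ref{Prop:A9}) to see that $J(\pi^*)$ is optimal for (WSOT'), apply Theorem~\ref{thm:monoAbstract}, and translate back. Steps~1--2 are fine. The paper only asserts the final translation, and your attempt to carry it out in Step~3 fails in two places.

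\textbf{First problem: the termwise comparison.} With $\Fc_2=\Fc^s_2$, the weights $\varphi\in C_b^+(\R\times\Pc_1)$ depend on $p$. So $\int f_\varphi\,d\alpha'=\sum_i\varphi(x_i,q_i)(\bar q_i-x_i)$ and $\int f_\varphi\,d\alpha=\sum_i\varphi(x_i,p_i)(\bar p_i-x_i)$ cannot be compared termwise. Suppose $\bar p_i<x_i$ and the point $(x_i,p_i)$ is not among the $(x_j,q_j)$. Take $\varphi$ a bump at $(x_i,p_i)$ vanishing at all other atoms. The competitor requirement becomes $0\le\bar p_i-x_i$, which is false. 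So with this $\Fc_2$ the theorem says nothing about competitors that move a strict atom.

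This is repairable: take $\varphi\in C_b^+(\R)$ depending on $x$ only. The feasible set becomes $\{\P\in\Lambda(\mu,\nu):\hat I(\P)\in\Pi_S(\mu,\nu)\}\supseteq\Lambda_S(\mu,\nu)$, and your Step~1 applies verbatim. The competitor condition reduces to $\sum_{i:x_i=x}(\bar q_i-\bar p_i)\le0$ for each $x$, which your termwise inequalities imply.

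\textbf{Second problem, the real gap: the choice of $M_0$.} Even after the repair, you only get optimality against competitors that never raise a barycentre. In Definition~\ref{def:supMart_mono} you are not free to choose $M_0,M_1$ for a coupling. They are prescribed as the martingale region of $\pi^*$ (cf.\ Lemma~\ref{lemma:fini_opt}, where $\bar p_i=x_i$ on $M_0\cup M_1$). Hence indices with $\bar p_i<x_i$ lie in neither set and carry no barycentre constraint at all.

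Putting the strict region into $M_0$ replaces the stated property by a strictly weaker one. Lemma~\ref{lemma:fini_opt}, and hence sufficiency, relies on exactly this freedom at strict indices: a competitor in $\Pi_S(\mu,\nu)$ may have $\bar q_i>\bar p_i$ wherever $\bar p_i<x_i$. Your fallback does not help either. With the prescribed $M_0,M_1$ and $m_x:=\int y\,\pi^*_x(dy)$, intersecting $\Gamma$ with $(M_0\cup M_1)\times\Pc_r$ loses mass. Indeed $\int(x-m_x)\,\mu(dx)=\bar\mu-\bar\nu$, which is positive whenever $\bar\mu>\bar\nu$.

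What is missing is a complementary-slackness step. You must show that $J(\pi^*)$ stays optimal when the supermartingale constraint is imposed only on $M_0\cup M_1$. This is where the slack $x-m_x>0$, together with the convexity of the problem, has to be exploited. Theorem~\ref{thm:monoAbstract} applied to that relaxed problem would then produce the competitors the definition requires. The theorem cannot do this on its own, because its competitors always satisfy $\int f\,d\alpha'\le\int f\,d\alpha$ for every constraint function $f$.
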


\begin{proof}

The argument is similar as \cite[Theorem 3.4]{BackPam}, and we include it here for completeness.
First we notice that (WSOT') can be seen as an optimal transport problem with addition linear constraints taking inequality form. By Theorem \ref{thm:monoAbstract}, we have if $P^* \in \Lambda_S(\mu,\nu)$, then $P^*$ is supermartingale $C$-monotone. Recall that the supermartingale $C$-monotone is defined in Definition \ref{def:supMart_mono}.

Notice that any supermartingale coupling $\pi \in \Pi_s(\mu,\nu)$ induces an element in $\Lambda_S(\mu,\nu)$ by the embedding $J$ defined before this proposition; so the function value of (WSOT') is smaller than (WSOT). When cost function is further convex as in the current context, by Proposition \ref{Prop:A9}, we have  $C(x, I(\Q)) \leq \int_{\Pc(\R)} C(x,p) \Q(dp)$. Let $\P \in \Lambda_S(\mu,\nu)$, then $\mu(dx) I(\P_x) \in \Pi_S(\mu,\nu)$. By above inequality,
$$
\int_{\R \times \Pc(\R)} C(x,p) \P(dx, dp) \geq \int_{\R \times \Pc(\R)} C(x, I(\P_x)) \mu(dx).
$$
Consequently the function values of (WSOT') and (WSOT) are equal. As $\pi^*$ is optimal for (WSOT), $J(\pi^*)$ is optimal for (WSOT'); and we deduce by first paragraph that $J(\pi^*)$ is supermartingale C-monotone. Similar reasoning as \cite[Remark 2.4(a)]{BackPam} leads to $\pi$ martingale C-monotone iff $J(\pi)$ martingale C-monotone. 
Consequently, we have $\pi^*$ is also supermartingale C-monotone.
\end{proof}

\appendix

\section{Appendix} \label{sec:appendix}

In this Appendix, we collect some needed results.

\subsection{Irreducible decomposition for supermartingale couplings}\label{sec:irreducible}

Let $\mu,\nu\in\mathcal P_1$ with $\mu\cvd\nu$ and set $D:=P_\nu-P_\mu$.
Define
\[
x^*:=\sup\{x\in\mathbb R:\ D(x)=0\}.
\]
Roughly speaking, points where $D$ vanishes act as barriers: for $\pi\in\Pi_S(\mu,\nu)$,
no mass can cross such a point on the ``martingale side''; see \cite{NutzStebegg.18}.
This yields an irreducible decomposition into components where the problem is genuinely supermartingale on the right
and martingale on the left.

\begin{lemma}[Nutz--Stebegg \cite{NutzStebegg.18},Proposition 3.4]\label{lem:irreducible}
Let $\mu,\nu\in\mathcal P_1$ with $\mu\cvd\nu$.
Let $I_0:=(x^*,+\infty)$, let $(I_k)_{k\geq1}$ be the open connected components of $\{D>0\}\cap(-\infty,x^*)$,
and set $I_{-1}:=\mathbb R\setminus\bigcup_{k\geq0}I_k$.
Let $\mu_k:=\mu|_{I_k}$ for $k\geq-1$, so that $\mu=\sum_{k\geq-1}\mu_k$.

Then there exists a unique decomposition $\nu=\sum_{k\geq-1}\nu_k$ such that
\[
\mu_{-1}=\nu_{-1},\quad \mu_0\cvd\nu_0\quad\text{and}\quad \mu_k\cv\nu_k\ \text{for all }k\geq1.
\]
Furthermore, any $\pi\in\Pi_S(\mu,\nu)$ admits a unique decomposition $\pi=\sum_{k\geq-1}\pi_k$ such that
$\pi_0\in\Pi_S(\mu_0,\nu_0)$ and $\pi_k\in\Pi_M(\mu_k,\nu_k)$ for all $k\neq0$.
\end{lemma}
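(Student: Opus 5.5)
The statement to prove is Lemma~\ref{lem:irreducible}, the irreducible decomposition. The plan is to reduce it to the martingale irreducible decomposition of Beiglb\"ock--Juillet. The key is a barrier property of zeros of $D=P_\nu-P_\mu$.

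\textbf{Step 1: barrier property.} Let $\pi\in\Pi_S(\mu,\nu)$ and let $z$ satisfy $D(z)=0$. For every $x$ we have $P_{\pi_x}(z)\ge (z-m_x)^+\ge (z-x)^+$, by Jensen applied to the function $y\mapsto (z-y)^+$ together with $m_x\le x$. Integrating in $x$ gives
\[
P_\nu(z)=\int P_{\pi_x}(z)\,\mu(dx)\ge P_\mu(z).
\]
Hence $D(z)=0$ forces equality in each inequality for $\mu$-a.e.\ $x$:
\begin{itemize}
\item if $x\le z$, then $\pi_x$ is concentrated on $(-\infty,z]$;
\item if $x>z$, then $\pi_x$ is concentrated on $[z,\infty)$.
\end{itemize}
So no mass crosses $z$ in either direction, apart from mass sitting at $z$ itself.

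\textbf{Step 2: martingale on the left of $x^*$.} Since $D(x^*)=0$ (by continuity of $D$), Step 1 confines the mass starting at $x\le x^*$ to $(-\infty,x^*]$. I will then show that the kernels starting at $x\le x^*$ are martingale kernels. Applying the identity $D(x^*)=0$ with the function $(x^*-y)^+$, which coincides with the affine function $x^*-y$ on the confined region, gives
\[
\int_{x\le x^*}(x-m_x)\,\mu(dx)=0 .
\]
Together with $m_x\le x$, this yields $m_x=x$ for $\mu$-a.e.\ $x\le x^*$.

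\textbf{Step 3: define the pieces and check the orders.} On $(-\infty,x^*)$ the coupling is a martingale between the restrictions $\mu|_{(-\infty,x^*]}$ and $\pi$'s image of it. I will apply the Beiglb\"ock--Juillet decomposition there. Its components are the intervals $I_k$, $k\ge 1$, which are the components of $\{D>0\}$; this works because $D$ equals the martingale potential difference on that half-line. The diagonal part is $I_{-1}$, where $\pi_x=\delta_x$.

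On $I_0$, Step 1 at $x^*$ shows that $\pi|_{I_0\times\R}$ has second marginal supported on $[x^*,\infty)$. Set $\nu_0$ to be this second marginal. It satisfies $\mu_0\cvd\nu_0$ because $\pi_0$ is a supermartingale coupling.

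\textbf{Step 4: uniqueness.} I expect this to be the main obstacle, since uniqueness of $\nu_k$ must hold independently of $\pi$. I will identify each $\nu_k$ through potentials: on each $I_k$ the restriction of $\nu$ to $\overline{I_k}$ is determined by $P_\nu$ and $P_\mu$ on $\overline{I_k}$, with endpoint atoms split according to the values of $D'$ at the endpoints. This follows Beiglb\"ock--Juillet for $k\ge1$, and uses the same potential identity on $[x^*,\infty)$ for $k=0$. Uniqueness of the decomposition of $\pi$ is then immediate from $\pi_k=\pi|_{I_k\times\R}$.

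Atoms of $\nu$ at barrier points need care. Throughout I will split such an atom by the one-sided derivatives of $D$, as in the martingale case.
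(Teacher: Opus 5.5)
Your argument is correct. The paper gives no proof of this lemma and simply imports it from Nutz--Stebegg. Your route is essentially the standard proof of that cited result: no mass crosses a zero of $D$; kernels issued from $(-\infty,x^*]$ are martingale kernels; and on $(-\infty,x^*]$ the potential difference of $\mu|_{(-\infty,x^*]}$ and its image coincides with $D$, so Beiglb\"ock--Juillet applies with components $I_k$, $k\ge1$.

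The one place to be more explicit is Step~4, since uniqueness must hold for an \emph{arbitrary} admissible decomposition $\nu=\sum_k\nu_k$, not just one coming from a given $\pi$.
\begin{itemize}
\item Glue the identity on $\mu_{-1}$, some coupling in $\Pi_S(\mu_0,\nu_0)$ (Strassen) and couplings in $\Pi_M(\mu_k,\nu_k)$, $k\ge1$, into an element of $\Pi_S(\mu,\nu)$.
\item Applying Step~1 at $x^*$ to this glued coupling forces $\nu_0=\nu|_{(x^*,\infty)}+\big(\nu((-\infty,x^*])-\mu((-\infty,x^*])\big)\delta_{x^*}$.
\item Hence $\sum_{k\neq0}\nu_k$ is determined by $(\mu,\nu)$ alone, and Beiglb\"ock--Juillet's uniqueness gives the remaining $\nu_k$. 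No separate potential or one-sided-derivative computation is needed.
\end{itemize}

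You should also dispatch the degenerate cases. If $x^*=+\infty$, then $\bar\mu=\bar\nu$, $I_0=\emptyset$ and everything is martingale. If $x^*=-\infty$, then $I_0=\mathbb R$ and there is nothing to decompose.
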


\subsection{A localisation lemma in adapted Wasserstein topology}\label{subsec:localisation}

The following result (from \cite{BJMP1}) allows one to localise adapted Wasserstein convergence on sets of positive mass.
It is a key technical input for our approximation argument.

\begin{lemma}[\cite{BJMP1}, Lemma 3.4]\label{lemm:3.4}
Let $r\ge1$. Let $\mu,\mu^k\in\mathcal M_r(X)$ and $\nu,\nu^k\in\mathcal M_r(Y)$ have equal masses, and assume that
\[
\pi^k\in\Pi(\mu^k,\nu^k)\ \xrightarrow{\mathcal{AW}_1}\ \pi\in\Pi(\mu,\nu).
\]
Let $A\subset X$ be measurable and let $B\supset A$ be open.

\vspace{1mm}

\noindent \rm(i)\,
Let $\gamma^k\in\Pi(\mu^k,\mu)$ be an optimizer in \eqref{eq:AW_def} for $\AW(\pi^k,\pi)$ and set
\[
\tilde\mu^k:=\gamma^k(\cdot\times A),
\qquad
\varepsilon_k:=1-\frac{\tilde\mu^k(X)}{\mu(A)}.
\]
Then $\tilde\mu^k\le \mu^k|_B$ and $\varepsilon_k\ge0$. Moreover, defining
\[
\tilde\pi^k:=\tilde\mu^k\times \pi_x^k,
\]
one has
\[
\mathcal{AW}_1\!\big(\tilde\pi^k,\ (1-\varepsilon_k)\,\pi|_{A\times Y}\big)+\varepsilon_k\longrightarrow0.
\]
\noindent \rm(ii)\,
Suppose that $\nu$ is concentrated on some $C\subset Y$.
Let $\tilde\nu^k$ and $\tilde\nu$ be the second marginals of $\tilde\pi^k$ and $\pi|_{A\times Y}$, respectively, and define
\[
\hat\mu^k=\frac{\tilde\nu^k(C)}{\tilde\mu^k(X)}\,\tilde\mu^k,
\qquad
\varepsilon_k':=1-\frac{\tilde\nu^k(C)}{\mu(A)}.
\]
Then there exist $\hat\nu^k\le\nu^k$ and $\hat\pi^k=\hat\mu^k\times\hat\pi_x^k\in\Pi(\hat\mu^k,\hat\nu^k)$
concentrated on $B\times C$ such that
\[
\mathcal{AW}_1^r \!\big(\hat\pi^k,\ (1-\varepsilon_k')\,\pi|_{A\times Y}\big)
+\int_X W_r^r(\hat\pi_x^k,\pi_x^k)\,\hat\mu^k(dx)
+\varepsilon_k'\longrightarrow0.
\]
In particular, $1-\varepsilon_k'\le 1-\varepsilon_k$.
\end{lemma}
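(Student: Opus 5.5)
The statement to prove is Lemma~\ref{lemm:3.4}, the localisation lemma quoted from \cite{BJMP1}. The approach is to work directly with an optimal coupling $\gamma^k\in\Pi(\mu^k,\mu)$ for $\AW(\pi^k,\pi)$ and restrict it to $X\times A$.

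\emph{Part (i).} Set $\tilde\mu^k=\gamma^k(\cdot\times A)$. By construction $\tilde\mu^k\le\mu^k$ and $\tilde\mu^k(X)=\gamma^k(X\times A)=\mu(A)$, so at this stage $\varepsilon_k=0$. The claimed inequality $\tilde\mu^k\le\mu^k|_B$ needs a modification. Since $B\supset A$ is open and $\int|x-x'|\,d\gamma^k\to0$, the mass $\gamma^k(B^c\times A)$ tends to zero when $A$ is compact with positive distance to $B^c$; for general measurable $A$ one approximates it from inside by compacts, using that $\mu$ is inner regular. One therefore redefines
\[
\tilde\mu^k:=\gamma^k\big((\cdot\cap B)\times A\big).
\]
This gives $\tilde\mu^k\le\mu^k|_B$ and $\varepsilon_k=\gamma^k(B^c\times A)/\mu(A)\ge0$. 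To see that $\varepsilon_k\to0$, observe that $\gamma^k\to(\mathrm{id},\mathrm{id})_\#\mu$ weakly, and apply Portmanteau together with the fact that the limit gives no mass to $B^c\times A$.

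For the $\AW$ bound, use the restricted coupling $\gamma^k|_{B\times A}$ between $\tilde\mu^k$ and $\mu|_A$ minus a defect of mass $\varepsilon_k\mu(A)$. Renormalising, the transport cost is bounded by
\[
\int_{B\times A}\big(|x-x'|+W_1(\pi^k_x,\pi_{x'})\big)\,d\gamma^k\le\AW(\pi^k,\pi)\to0,
\]
plus a term controlled by $\varepsilon_k$ times first moments. Uniform integrability of the marginals makes that second term vanish.

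\emph{Part (ii).} The second marginal $\tilde\nu^k$ of $\tilde\pi^k$ converges to that of $\pi|_{A\times Y}$, which is concentrated on $C$. Hence, if $C$ is open, $\liminf\tilde\nu^k(C)\ge\mu(A)$ by Portmanteau, so $\varepsilon'_k\to0$.

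To build the localised kernels, take $\pi^k_x$ for $x\in\tilde\mu^k$ and replace it by a kernel $\hat\pi^k_x\le$ (a rescaled part of) $\pi^k_x$ concentrated on $C$, with a common rescaling chosen so that the first marginal is the multiple $\hat\mu^k$. The cleanest implementation is to cut mass in $y$: keep $\tilde\mu^k(dx)\pi^k_x(dy)\mathbf 1_C(y)$, then redistribute in $x$ so each kernel is a probability. This step is what guarantees $\hat\nu^k\le\nu^k$, since every $y$-measure is dominated by a piece of $\pi^k$.

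The key estimate is
\[
\int W_1(\hat\pi^k_x,\pi^k_x)\,\hat\mu^k(dx)\to0.
\]
Each $\pi^k_x$ is $W_1$-close to $\pi_{x'}$, which is carried by $C$, so the mass of $\pi^k_x$ outside $C$ is small on average. However, smallness in $W_1$ only bounds mass at distance from the complement of $C$, so $C$ must be enlarged slightly or a kernelwise approximation used. This is the main obstacle.

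The route I would try first: couple $\pi^k_x$ optimally with $\pi_{x'}$, and replace the points $y\notin C$ by their partner points $y'\in C$. Because these partner points come from $\pi_{x'}$, not from $\pi^k$, this breaks $\hat\nu^k\le\nu^k$. So in the end I expect to have to mass-cut exactly as above and absorb the loss into $\varepsilon'_k$ via the global rescaling $\tilde\nu^k(C)/\tilde\mu^k(X)$. The $W_1$ cost of the cut is bounded by $\int_{C^c}|y|\,\pi^k_x(dy)$ plus renormalisation terms; these vanish by uniform integrability together with $\tilde\nu^k(C^c)\to0$.

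The final inequality $1-\varepsilon'_k\le1-\varepsilon_k$ is immediate from $\tilde\nu^k(C)\le\tilde\nu^k(Y)=\tilde\mu^k(X)$.
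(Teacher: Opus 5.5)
Your argument for (i) is sound, and both of your amendments to the quoted statement are necessary. First, with $\tilde\mu^k=\gamma^k(\cdot\times A)$ one has $\varepsilon_k\equiv0$, and $\tilde\mu^k\le\mu^k|_B$ fails in general. Second, without openness of $C$, part (ii) is false: $\pi^k=\delta_{(0,1/k)}$, $\pi=\delta_{(0,0)}$, $A=C=\{0\}$ give $\varepsilon_k'=1$ for all $k$. The paper's application uses $C=L^\circ$, which is open.

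In (i), Portmanteau must be applied to the closed sets $B^c\times K$ with $K\subset A$ compact, not to $B^c\times A$; your inner-regularity remark provides exactly this. You work only with $r=1$, which is all the paper uses; the $W_r^r$ version would need $\mathcal{AW}_r$-convergence in the hypothesis.

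The ``main obstacle'' you flag in (ii) is not one. Only the averaged mass $\int\pi^k_x(C^c)\,\tilde\mu^k(dx)=\tilde\nu^k(C^c)$ ever enters. It tends to $0$ by Portmanteau, since $C^c$ is closed and $\tilde\nu^k\to\tilde\nu$ weakly by (i). No enlargement of $C$ is needed.

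The genuine gap is the construction of $\hat\pi^k_x$, which is the whole content of (ii) and which you leave unspecified. Your concrete suggestion is a kernel dominated by (a common rescaling of) its own $C$-part $\pi^k_x|_C$. This cannot work, because the prescribed $\hat\mu^k=c_k\tilde\mu^k$, with $c_k:=\tilde\nu^k(C)/\tilde\mu^k(X)$, may charge points where $\pi^k_x(C)=0$, for every $k$. For example, take $\mu=\mu^k$ uniform on $[0,1]$, $\pi_x=\delta_0$, $\pi^k_x=\delta_2$ for $x>1-1/k$ and $\pi^k_x=\delta_0$ otherwise, $A=[0,1]$, $C=(-1,1)$. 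Then the optimal $\gamma^k$ is the identity and $\hat\mu^k=(1-1/k)\mu$.

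The global factor $c_k$ does not make the cut kernels probabilities either. The mass that $\pi^k_x$ loses outside $C$ must be refilled from the pooled $C$-mass of all kernels: with $\rho^k:=\tilde\nu^k|_C/\tilde\nu^k(C)$, set
\[
\hat\pi^k_x:=\pi^k_x|_C+\pi^k_x(C^c)\,\rho^k .
\]
Since $c_k\tilde\nu^k(C^c)=(1-c_k)\tilde\nu^k(C)$, the second marginal of $\hat\mu^k\times\hat\pi^k_x$ is exactly $c_k\tilde\nu^k|_C+(1-c_k)\tilde\nu^k|_C=\tilde\nu^k|_C\le\tilde\nu^k\le\nu^k$. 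Also, $\hat\pi^k$ lives on $B\times C$. Coupling $\pi^k_x|_{C^c}$ with $\pi^k_x(C^c)\rho^k$ and using $c_k\le1$ gives
\[
\int W_1(\hat\pi^k_x,\pi^k_x)\,\hat\mu^k(dx)\le\int_{C^c}|y|\,\tilde\nu^k(dy)+\tilde\nu^k(C^c)\int|z|\,\rho^k(dz)\longrightarrow0 .
\]
This follows from uniform integrability of $(\nu^k)$ and $\tilde\nu^k(C^c)\to0$.

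You also do not address the $\mathcal{AW}$ term in (ii). It follows by the triangle inequality: $\hat\pi^k$ and $c_k\tilde\pi^k$ share the first marginal $\hat\mu^k$, and $c_k(1-\varepsilon_k)=1-\varepsilon_k'$. Moreover,
\[
\mathcal{AW}_1\big(c_k\tilde\pi^k,\,c_k(1-\varepsilon_k)\pi|_{A\times Y}\big)=c_k\,\mathcal{AW}_1\big(\tilde\pi^k,\,(1-\varepsilon_k)\pi|_{A\times Y}\big)\to0
\]
by (i).
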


\subsection{An alternative form of WSOT} \label{subsec:alternativeWSOT}

To apply Theorem~\ref{thm:monoAbstract}, it is convenient to work with the canonical embedding $J$: 
$$
\begin{aligned}
J:& \Pc(X \times Y) \to \Pc(X \times \Pc(Y)),\\
&\pi \mapsto \mbox{proj}_X(\pi)(dx) \delta_{\pi_x}(dp).
\end{aligned}
$$
Consider its left-inverse, the $X\times Y$-intensity map $\hat I$, and the intensity map $I$:
$$
\begin{aligned}
\hat I : \ & \Pc(X \times  \Pc(Y)) \to \Pc(X \times Y), \ \ \ \  I :  & \Pc( \Pc(Y)) \to  \Pc( Y),\hspace{30mm} \\
&\P \mapsto  \int_{p \in \Pc(Y)} p(dy) \P(dx, dp), &\Q \mapsto  I(\Q)(dy):= \int_{ \Pc(Y)} p(dy) \Q(dp).
\end{aligned}
$$
We further define the set $\Lambda(\mu,\nu) := \left\{ \P \in \Pc(X \times \Pc(Y)): \hat I(\P) \in \Pi(\mu, \nu) \right\}$.

\begin{proposition} \label{Prop:A9} (\cite[Proposition A.9]{BJMP}) 
Let $g: Y \to [1, +\infty)$ be continuous, $C: \Pc_g(Y) \to \R$ be convex, lower semicontinuous and lower bounded by a negative multiple of $\hat g$. Then for all $\Q \in \Pc_{\hat g}(\Pc(Y))$ holds
$$
C(I(\Q)) \leq \int_{\Pc_g(Y)} C(p) \Q(dp).
$$
\end{proposition}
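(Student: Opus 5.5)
The plan is to reduce to the scalar convex case and use a barycentre (Jensen-type) inequality for convex lower semicontinuous functionals on $\Pc_g(Y)$. Fix $\Q\in\Pc_{\hat g}(\Pc(Y))$, where $\hat g(p):=\int g\,dp$. The moment condition guarantees that $I(\Q)\in\Pc_g(Y)$ and that the negative part of $p\mapsto C(p)$ is $\Q$-integrable. Hence the right-hand side is well defined in $(-\infty,+\infty]$. If it is $+\infty$ there is nothing to prove, so assume it is finite.

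The first step treats finitely supported $\Q=\sum_{i=1}^n\lambda_i\delta_{p_i}$. Here $I(\Q)=\sum_i\lambda_i p_i$, and the inequality is exactly finite convexity of $C$, obtained by induction on $n$.

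The second step approximates a general $\Q$ by finitely supported measures $\Q^m$ such that $I(\Q^m)\to I(\Q)$ in the topology of $\Pc_g(Y)$ and $\int C\,d\Q^m\to\int C\,d\Q$, or at least $\limsup_m\int C\,d\Q^m\le\int C\,d\Q$. Lower semicontinuity then gives
\[
C(I(\Q))\le\liminf_m C(I(\Q^m))\le\liminf_m\int C\,d\Q^m\le\int C\,d\Q .
\]
An empirical approximation does not directly control $\int C\,d\Q^m$, because $C$ is only lower semicontinuous. I would instead use conditional expectations along a refining sequence of finite partitions. Choose measurable partitions $(A^m_j)_j$ of $\Pc_g(Y)$, each the sigma-algebra generated by an increasing family, such that the union generates the Borel sigma-algebra. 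Replace $\Q$ on $A^m_j$ by a Dirac mass at the barycentre $\bar p^m_j:=\Q(A^m_j)^{-1}\int_{A^m_j}p\,\Q(dp)$. Intensities are preserved, so $I(\Q^m)=I(\Q)$ exactly and lower semicontinuity is not even needed at the level of $I$.

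Convexity gives $C(\bar p^m_j)\le$ the average of $C$ over $A^m_j$, but only after the infinite Jensen inequality on each cell. That is circular, so this route needs care. The cleaner route, which I will follow, is Hahn--Banach. A convex lsc function on the locally convex space of signed measures integrating $g$, equipped with the weak topology induced by $C_g$ test functions, is the supremum of continuous affine minorants $\ell(p)=a+\int\varphi\,dp$, with $\varphi$ continuous and $|\varphi|\le c\,g$. For each such $\ell\le C$, Fubini gives
\[
\ell(I(\Q))=a+\int\!\!\int\varphi\,dp\,\Q(dp)=\int\ell\,d\Q\le\int C\,d\Q .
\]
Taking the supremum over $\ell$ yields the claim.

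The main obstacle is justifying this affine-minorant representation. It requires checking that $\Pc_g(Y)$, with the $g$-weighted weak topology, sits as a convex subset of a locally convex space whose dual is $\{p\mapsto\int\varphi\,dp:\varphi\in C_g\}$. One must also handle the lower bound by $-K\hat g$, which ensures the minorants can be taken with at most $g$-growth. If one only assumes lsc with respect to the restricted topology, I would first extend $C$ by $+\infty$ outside $\Pc_g(Y)$ and verify that the extension remains convex and lsc before applying the separation theorem.
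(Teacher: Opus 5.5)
The paper does not prove this proposition. It is quoted from \cite{BJMP} and used only as a black box in the necessity half of the monotonicity principle, so there is no in-text argument to compare yours with. Judged on its own, your final route (Fenchel--Moreau plus Fubini) is correct, and the checks you flag as the main obstacle all go through:
\begin{itemize}
\item Let $E$ be the finite signed measures $m$ with $\int g\,d|m|<\infty$, paired with $C_g$, the continuous $\varphi$ with $|\varphi|\le c\,g$. Since $g\ge1$, $C_b(Y)\subset C_g$ separates points. So $\sigma(E,C_g)$ is a Hausdorff locally convex topology whose continuous linear functionals are exactly $m\mapsto\int\varphi\,dm$ with $\varphi\in C_g$.
\item The topology of $\mathcal{P}_g(Y)$ is the trace of $\sigma(E,C_g)$. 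It is also metrizable, being generated by the weak topology together with $p\mapsto\int g\,dp$, so sequential and topological lower semicontinuity coincide.
\item The set $\mathcal{P}_g(Y)=\{m\in E:\int 1\,dm=1,\ \int\varphi\,dm\ge0\ \forall\varphi\in C_b^+\}$ is convex and $\sigma(E,C_g)$-closed. Hence the $+\infty$ extension of $C$ is proper, convex and lsc on $E$, and Fenchel--Moreau applies.
\item Fubini, with $|\varphi|\le c\,g$ and $\int\hat g\,d\Q<\infty$, then gives $\ell(I(\Q))=\int\ell\,d\Q$ for every continuous affine minorant $\ell$.
\end{itemize}

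Two corrections. First, the $g$-growth of the affine minorants is automatic from the form of the dual. The lower bound $C\ge-K\hat g$ plays no role there; its only job is to make $\int C\,d\Q$ well defined, as you correctly say in your first paragraph.

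Second, you dismiss empirical approximation too quickly. Take $p_1,p_2,\dots$ i.i.d.\ with law $\Q$. Convexity gives $C\big(\frac1N\sum_{i\le N}p_i\big)\le\frac1N\sum_{i\le N}C(p_i)$. The right-hand side tends to $\int C\,d\Q$ a.s.\ by the strong law of large numbers, which needs only integrability of $C$, not any continuity. The strong law applied to $\int g\,dp_i$ and to a countable convergence-determining family in $C_b(Y)$ gives $\frac1N\sum_{i\le N}p_i\to I(\Q)$ in $\mathcal{P}_g(Y)$ a.s. Lower semicontinuity then concludes.

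That route uses only sequential lsc and no duality theory. Yours is deterministic and additionally exhibits $C$ as a supremum of functionals $a+\int\varphi\,dp$ with $\varphi\in C_g$.
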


Now we can introduce the alternative formulation (WSOT'):
$$
\inf_{\pi \in \Lambda_S(\mu,\nu)} \int_{\R \times \Pc(\R)} C(x, p) P(dx, dp),
$$
where $\Lambda_S(\mu,\nu)$ is the set of all $\P \in \Lambda(\mu,\nu)$ with full measure on $\{ (x,p) \in \R \times \Pc_1: x \geq \int_{\R} y p(dy)  \}$.

\subsection{Competitor and finite optimality}\label{subsec:def_mono}

To formulate the monotonicity principle (Section~\ref{subsec:mono}), we recall the notions of competitor and finite optimality.

\begin{definition}[Competitor]\label{def:competitor}
Let $\pi$ be a finite measure on $\R^2$ with finite first moment, and let $\pi_1$ be its first marginal.
Fix a disintegration $\pi=\pi_1\times\kappa_x$.
Let $M_0,M_1\subset\R$ be Borel.
A measure $\pi'$ is an \emph{$(M_0,M_1)$-competitor} of $\pi$ if it has the same marginals as $\pi$ and if,
writing $\pi'=\pi_1\times\kappa'_x$, one has
\[
\int_\R y\,\kappa'_x(dy)\le \int_\R y\,\kappa_x(dy)\quad\text{for }\pi_1\text{-a.e.\ }x\in M_0,
\qquad
\int_\R y\,\kappa'_x(dy)= \int_\R y\,\kappa_x(dy)\quad\text{for }\pi_1\text{-a.e.\ }x\in M_1.
\]
\end{definition}

\begin{definition}[Finite optimality]\label{def:finite_opt}
Let $G:\R\times\R\to\R$ be a Borel cost function and let $\Gamma\subset\R^2$ be Borel.
We say that $\Gamma$ is \emph{finitely optimal for $G$} if for every finitely supported probability measure
$\alpha\in\mathcal P(\R^2)$ with $\supp(\alpha)\subset\Gamma$, one has
\[
\int_{\R^2} G(x,y)\,\alpha(dx,dy)
\le
\int_{\R^2} G(x,y)\,\alpha'(dx,dy)
\]
for every competitor $\alpha'$ of $\alpha$ (in the sense relevant to the constraint under consideration).
\end{definition}

\bibliographystyle{plain}
\bibliography{references}

\end{document}